\DeclareMathOperator{\WS}{\operatorname{WS}}
\DeclareMathOperator{\WA}{\operatorname{WA}}
\DeclareMathOperator{\Mat}{\operatorname{M}}
\DeclareMathOperator{\Mats}{\operatorname{S}}
\DeclareMathOperator{\Mata}{\operatorname{A}}
\DeclareMathOperator{\Diag}{\operatorname{Diag}}
\DeclareMathOperator{\GL}{\operatorname{GL}}
\DeclareMathOperator{\Ker}{\operatorname{Ker}}
\DeclareMathOperator{\Vect}{\operatorname{span}}
\DeclareMathOperator{\im}{\operatorname{Im}}
\DeclareMathOperator{\rk}{\operatorname{rk}}
\DeclareMathOperator{\urk}{\operatorname{urk}}
\renewcommand{\setminus}{\smallsetminus}
\def\K{\mathbb{K}}
\def\calS{\mathcal{S}}
\def\calT{\mathcal{T}}
\def\calV{\mathcal{V}}
\def\calW{\mathcal{W}}
\def\lcro{\mathopen{[\![}}
\def\rcro{\mathclose{]\!]}}
\theoremstyle{definition}
\newtheorem{Def}{Definition}[section]
\newtheorem{Not}[Def]{Notation}
\theoremstyle{plain}
\newtheorem{theo}{Theorem}[section]
\newtheorem{prop}[theo]{Proposition}
\newtheorem{cor}[theo]{Corollary}
\newtheorem{lemme}[theo]{Lemma}
\newtheorem{claim}{Claim}
\theoremstyle{plain}
\theoremstyle{remark}
\title{Large spaces of symmetric or alternating matrices with bounded rank}
\author{Cl\'ement de Seguins Pazzis\footnote{Universit\'e de Versailles Saint-Quentin-en-Yvelines, Laboratoire de Math\'ematiques
de Versailles, 45 avenue des Etats-Unis, 78035 Versailles cedex, France}
\footnote{e-mail address: dsp.prof@gmail.com}}
\begin{document}

\thispagestyle{plain}

\maketitle

\begin{abstract}
Let $r$ and $n$ be positive integers such that $r<n$, and $\K$ be an arbitrary field.
In a recent work, we have determined the maximal dimension for a linear subspace of $n$ by $n$ symmetric matrices
with rank less than or equal to $r$, and we have classified the spaces having that maximal dimension.
In this article, provided that $\K$ has more than two elements, we extend this classification to spaces
whose dimension is close to the maximal one: this generalizes a result of Loewy \cite{Loewy}.
We also prove a similar result on spaces of alternating matrices with bounded rank, with no restriction on the cardinality
of the underlying field.
\end{abstract}

\vskip 2mm
\noindent
\emph{AMS Classification:} 15A30, 15A03

\vskip 2mm
\noindent
\emph{Keywords:} Matrices, bounded-rank spaces, symmetric matrices, alternating matrices.

\section{Introduction}

\subsection{The problem}

Let $\K$ be a (commutative) field. We denote:
\begin{itemize}
\item By $\Mat_{n,p}(\K)$ the space of all $n$ by $p$ matrices with entries in $\K$, and
$\Mat_n(\K)$ is defined as $\Mat_{n,n}(\K)$;
\item By $\Mats_n(\K)$ the space of all $n$ by $n$ symmetric matrices with entries in $\K$;
\item By $\Mata_n(\K)$ the space of all $n$ by $n$ alternating matrices with entries in $\K$
(that is, the skew-symmetric matrices with diagonal zero or, equivalently, the matrices $A \in \Mat_n(\K)$
such that $X^T AX=0$ for all $X \in \K^n$);
\item By $\GL_n(\K)$ the group of all invertible matrices of $\Mat_n(\K)$.
\end{itemize}
Given integers $i \in \lcro 1,n\rcro$ and $j \in \lcro 1,p\rcro$, we denote by $E_{i,j}$
the matrix of $\Mat_{n,p}(\K)$ with zero entries everywhere except at the $(i,j)$ spot, where the entry equals $1$.

Two subsets $\calV$ and $\calW$ of $\Mat_n(\K)$ are called \textbf{congruent} when
there exists a matrix $P \in \GL_n(\K)$ such that
$$\calV=P\,\calW\, P^T,$$
i.e.\ $\calV$ and $\calW$ represent the same set of bilinear forms in a different choice of basis of $\K^n$.

The \textbf{upper-rank} of a non-empty subset $\calV$ of $\Mat_{n,p}(\K)$ is defined as
$$\urk \calV:=\max \bigl\{\rk M \mid M \in \calV\bigr\}.$$

Spaces of matrices with bounded rank have attracted much focus from the mathematical community in the
last fifty years. In the case of rectangular matrices, the basic theorem is the one of Flanders \cite{Flanders},
which states that if a linear subspace $S$ of $\Mat_{n,p}(\K)$ has upper-rank less than or equal to $r$, with
$r \leq p \leq n$, then $\dim S \leq nr$. Moreover, if $\dim S=nr$ then either there exists a $(p-r)$-dimensional linear subspace of
$\K^p$ on which all the matrices of $S$ vanish, or else $n=p$ and there exists an $r$-dimensional linear subspace of $\K^n$
that includes the range of every matrix of $S$. By perfecting Flanders's techniques, Atkinson, Lloyd and Beasley \cite{AtkLloyd,Beasley} later proved
that the latter statement actually holds whenever $\dim S>nr-(n-p+r)+1$, provided that the underlying field
be of cardinality greater than $r$. Recently, we have been able to extend this result to arbitrary fields \cite{dSPboundedrank},
and we have even managed to obtain a complete classification of the situation when $\dim S\geq nr-2(n-p+r)+2$ and $S$ is a maximal
space with upper-rank less than or equal to $r$, unless $\K$ has two elements only
\cite{dSPboundedrankv2}. In short, maximal spaces with upper-rank at most $r$ and dimension close to the critical one $nr$ are known.

In this article, we shall be concerned with the corresponding problem for subspaces of symmetric or alternating matrices.
Results on this topic have been obtained much later than in the rectangular case, and they have a reputation of being much more difficult.
The first results were obtained by Meshulam \cite{Meshulamsymmetric}, who determined the maximal dimension for a linear subspace of
$\Mats_n(\K)$ (or $\Mata_n(\K)$) with upper-rank at most $r$, provided that the field $\K$ does not have characteristic $2$
and its cardinality is large with respect to $n$ and $r$. Later, by following Meshulam's core ideas,
Loewy and Radwan \cite{LoewyRadwan} were able to characterize the subspaces of symmetric matrices having the critical dimension.
In a recent work \cite{dSPsym}, we have used new methods to
generalize the results of Meshulam, Loewy and Radwan to an arbitrary field (with arbitrary characteristic and cardinality), even
allowing affine subspaces instead of linear subspaces only. In this article, our ambition is
to extend this result to a large range of dimensions below the critical one: in short, our results will be the equivalent of the one of
Atkinson, Lloyd and Beasley in the theory of large spaces of bounded-rank rectangular matrices.
Before we go on, we should point out that a special case of our theorems has already been obtained by Loewy in \cite{Loewy}:
we will discuss the limitations of his result later on.

\vskip 3mm
It is high time we gave examples of large maximal subspaces of $\Mata_n(\K)$ and $\Mats_n(\K)$ with upper-rank $r$.
The following special spaces are the equivalent of the so-called ``compression spaces" in the theory of large spaces of bounded-rank rectangular matrices.
Let $n$, $s$ and $t$ be non-negative integers such that $2s+t \leq n$.
We define $\WS_{n,s,t}(\K)$ (respectively $\WA_{n,s,t}(\K)$) as the linear subspace of all matrices of $\Mats_n(\K)$
(respectively, of $\Mata_n(\K)$) of the form
$$\begin{bmatrix}
[?]_{s \times s} & [?]_{s \times t} & [?]_{s \times (n-s-t)} \\
[?]_{t \times s} & [?]_{t \times t} & [0]_{t \times (n-s-t)} \\
[?]_{(n-s-t) \times s} & [0]_{(n-s-t) \times t} & [0]_{(n-s-t) \times (n-s-t)}
\end{bmatrix}.$$
One checks that
$$\urk \WS_{n,s,t}(\K)=2s+t$$
and, if $t$ is odd, that
$$\urk \WA_{n,s,t}(\K)=2s+t-1.$$
Setting
$$s_{n,s,t}:=\dim \WS_{n,s,t}(\K) \quad \text{and} \quad a_{n,s,t}:=\dim \WA_{n,s,t}(\K),$$
one computes that
$$s_{n,s,t}=\dbinom{s+1}{2}+\dbinom{t+1}{2}+s(n-s)$$
and
$$a_{n,s,t}=\dbinom{s}{2}+\dbinom{t}{2}+s(n-s).$$
It is easily checked that $\WS_{n,s,t}(\K)$ is a maximal linear subspace of $\Mats_n(\K)$ with upper-rank $2s+t$, and that
if $t$ is odd $\WA_{n,s,t}(\K)$ is a maximal linear subspace of $\Mata_n(\K)$ with upper-rank $2s+t-1$.

Now, let $n$ and $r$ be integers such that $0 \leq r \leq n$.
One checks that both sequences $(s_{n,s,r-2s})_{0 \leq s \leq \lfloor r/2\rfloor}$
and $(a_{n,s,r+1-2s})_{0 \leq s \leq \lfloor (r+1)/2\rfloor}$ are strictly convex.
Indeed, both functions $s \mapsto \frac{(s+1)s}{2}+\frac{(r-2s+1)(r-2s)}{2}+s(n-s)$
and $s \mapsto \frac{s(s-1)}{2}+\frac{(r-2s+1)(r-2s)}{2}+s(n-s)$
are polynomials of degree $2$ and with coefficient on $s^2$ equal to $\frac{3}{2}\cdot$

We can now recall the following results of \cite{dSPsym}, which extend those of Loewy, Meshulam and Radwan
to arbitrary fields:

\begin{theo}\label{oldtheoalt}
Let $n$ and $s$ be non-negative integers such that $2s<n$.
Let $S$ be a linear subspace of $\Mata_n(\K)$ such that $\urk S \leq 2s$.
Then,
$$\dim S \leq \max\bigl(a_{n,0,2s+1},a_{n,s,1}\bigr),$$
and if equality occurs then $S$ is congruent to $\WA_{n,s,1}(\K)$ or to $\WA_{n,0,2s+1}(\K)$.
\end{theo}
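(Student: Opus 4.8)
The plan is to prove the statement by induction on $n$, after reducing to the case $\urk S=2s$. If $\urk S\leq 2s-2$ (the case $s=0$ being trivial), then the statement for $s-1$ gives $\dim S\leq\max(a_{n,0,2s-1},a_{n,s-1,1})$, and a short computation shows that $a_{n,0,2s-1}<a_{n,0,2s+1}$ and $a_{n,s-1,1}<a_{n,s,1}$, so the bound holds with room to spare and no extremal space occurs. Hence we may assume that $S$ contains a matrix $A$ with $\rk A=2s$. Note that $a_{n,0,2s+1}-a_{n,s,1}=\tfrac{1}{2}s(5s+3-2n)$, so $\WA_{n,0,2s+1}(\K)$ is the larger model when $2n\leq 5s+3$ and $\WA_{n,s,1}(\K)$ when $2n\geq 5s+3$ (both being extremal when $2n=5s+3$); the induction must therefore carry \emph{both} candidate configurations along. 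The base case $n=2s+1$ is immediate: $\dim S\leq\dim\Mata_{2s+1}(\K)=\binom{2s+1}{2}=a_{n,0,2s+1}$, which dominates $a_{n,s,1}$, and equality forces $S=\Mata_{2s+1}(\K)=\WA_{n,0,2s+1}(\K)$.

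For the inductive step, assume $n\geq 2s+2$. Using congruence, bring $A$ to the form $A_1\oplus 0_{n-2s}$ with $A_1\in\GL_{2s}(\K)\cap\Mata_{2s}(\K)$, and split $\K^n=W\oplus V$ with $\dim W=2s$ and $\dim V=n-2s\geq 2$; write each $M\in S$ in the block form $M=\begin{bmatrix}M_1&M_2\\-M_2^{T}&M_3\end{bmatrix}$. From $\rk(\lambda A+M)\leq 2s$ for all scalars $\lambda$, a Schur-complement computation (to be carried out with extra care when $\K$ is the two-element field, where the usual density argument in $\lambda$ is unavailable — which is why, unlike the symmetric case treated further in this paper, no restriction on $|\K|$ is needed) yields $M_3=0$ and $M_2^{T}A_1^{-1}M_2=0$ for every $M\in S$: equivalently, $V$ is a common totally isotropic subspace of all the forms in $S$, and the columns of each $M_2$ span a totally isotropic subspace of the symplectic space $(W,A_1)$. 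Hence $\dim S=\dim\calW_1+\dim T$, where $\calW_1:=\{P\in\Mata_{2s}(\K):P\oplus 0\in S\}$ and $T:=\{M_2:M\in S\}\subseteq\Mat_{2s,n-2s}(\K)$, with $\urk T\leq s$; and applying the rank bound instead to $A+M+(P\oplus 0)$, for $M\in S$ and $P\in\calW_1$, gives the coupling relations $M_2^{T}\operatorname{adj}(A_1+M_1+P)\,M_2=0$.

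The crux is then to bound $\dim\calW_1+\dim T$ and to analyse equality, and here I would dichotomise according to the dimension of a maximal common totally isotropic subspace $V_0\supseteq V$ of $S$ — equivalently, according to whether the column spaces of all matrices of $T$ lie in one common Lagrangian $L$ of $(W,A_1)$. In the "large $V_0$ / common Lagrangian" branch, $T$ embeds into $\operatorname{Hom}(V,L)$, so $\dim T\leq s(n-2s)$, and the coupling relations should be exploited to bound $\dim\calW_1$; together these give $\dim S\leq a_{n,s,1}$, and tracing the tightness of these estimates — combined with an analysis of a suitable hyperplane section of $S$ to which the induction hypothesis applies — should reconstruct $S$, up to congruence, as $\WA_{n,s,1}(\K)$ (or, at the threshold $2n=5s+3$, possibly as $\WA_{n,0,2s+1}(\K)$). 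In the "small $V_0$" branch, one instead chooses a hyperplane $\K^{n-1}\subseteq\K^n$ adapted to $S$ (using a direction drawn from $V_0$) on which the restriction map $S\to S'\subseteq\Mata_{n-1}(\K)$ has small corank and $\urk S'\leq 2s$; applying the induction hypothesis to $S'$ while keeping careful control of the corank yields $\dim S\leq a_{n,0,2s+1}$, and in case of equality forces $S$ congruent to $\WA_{n,0,2s+1}(\K)$.

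I expect the main obstacle to be precisely this last step: pinning down the right dichotomy (the correct threshold for "$V_0$ large'', and why it cleanly separates the two branches), using the coupling relations to control $\dim\calW_1$ in the first branch, choosing the hyperplane and bounding its corank in the second, and — in both branches — propagating the classification of the equality case rather than merely the dimension bound. A secondary, purely technical obstacle is establishing $M_3=0$ and the isotropy of $M_2$ over the two-element field, where the naive specialisation-in-$\lambda$ argument of the second paragraph breaks down and must be replaced by a direct rank count.
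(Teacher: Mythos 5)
This statement is Theorem~\ref{oldtheoalt}, which the paper explicitly recalls from \cite{dSPsym} without proof; there is no ``paper's own proof'' of it in the present document to compare against. What follows is therefore an assessment of your proposal on its own terms, informed by the adjacent material in the paper (in particular Proposition~\ref{keylemmaalternate}, which runs a closely related argument).

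Your setup is sensible and echoes the machinery the paper actually uses: fix a rank-$2s$ matrix $A$, congruence-reduce to $A=A_1\oplus 0$, and use the nondegenerate alternating form $b(X,Y)=X^T A_1^{-1}Y$ on $\K^{2s}$ to control the off-diagonal blocks. The reduction to $\urk S=2s$, the base case $n=2s+1$, and the identity $a_{n,0,2s+1}-a_{n,s,1}=\tfrac12 s(5s+3-2n)$ all check out. However, what you have written stops short of being a proof in two places, both of which you yourself flag.

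First, the key structural conclusion ``$M_3=0$ and $M_2^T A_1^{-1}M_2=0$ for \emph{every} $M\in S$'' is extracted from a Schur complement applied to $\lambda A+M$ by letting $\lambda$ vary. This requires enough invertible values of $\lambda A_1+M_1$ and then a degree argument; the Pfaffian of that block has degree $s$ in $\lambda$, so you need at least $s+2$ elements before the rational identity forces $M_3=0$ and then $M_2^T A_1^{-1}M_2=0$. You say this ``must be replaced by a direct rank count'' over $\F_2$, but no such replacement is given, and this is exactly the point at which the classical Atkinson--Lloyd/Beasley-type arguments break down and where \cite{dSPboundedrank} had to introduce genuinely new techniques. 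Stating that the gap exists is not the same as closing it.

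Second, the entire classification of the extremal spaces --- the dichotomy on the dimension of a maximal common totally isotropic subspace $V_0$, the exploitation of the ``coupling relations'' $M_2^T\operatorname{adj}(A_1+M_1+P)M_2=0$ to bound $\dim\calW_1$, the choice of hyperplane in the ``small $V_0$'' branch, the bound on the corank of the restriction map, and the propagation of the equality analysis in both branches --- is left entirely as ``to be carried out.'' You explicitly say ``I expect the main obstacle to be precisely this last step.'' That is an honest appraisal, but it means the proposal is a strategy outline rather than a proof: the dimension bound $\dim S\leq\max(a_{n,0,2s+1},a_{n,s,1})$ is never actually derived, and the equality case is not treated. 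For comparison, the proof of Proposition~\ref{keylemmaalternate} in this paper (which is the special case where $\dim S_H\geq s$ for every hyperplane $H$) already shows how much fine work is required just to pin down the spaces $T_H$ and then reconstruct $\WA_{n,s,1}(\K)$ via the invariance trick and repeated application of the extraction lemma; the full Theorem~\ref{oldtheoalt} must additionally juggle the $\WA_{n,0,2s+1}(\K)$ branch and the comparison between the two candidate maxima. None of that is present here.

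In short: the opening reductions and the block decomposition are correct and point in a reasonable direction that is consonant with the methods of \cite{dSPsym} and Proposition~\ref{keylemmaalternate}, but the proposal does not constitute a proof. The two critical holes are (i) establishing the annihilation and isotropy of the corner blocks over small fields (especially $\F_2$), and (ii) actually carrying out the dichotomy, the dimension count $\dim S=\dim\calW_1+\dim T\leq\max(a_{n,0,2s+1},a_{n,s,1})$, and the classification in the equality case.
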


\begin{theo}\label{oldtheosym}
Let $n$ and $r$ be non-negative integers such that $r<n$.
Let $S$ be a linear subspace of $\Mats_n(\K)$ such that $\urk S \leq r$.
\begin{enumerate}[(a)]
\item If $r=2s$ for some integer $s$, then
$$\dim S \leq \max\bigl(s_{n,0,r},s_{n,s,0}\bigr),$$
and if equality occurs then either $S$ is congruent to $\WS_{n,s,0}(\K)$ or to $\WS_{n,0,r}(\K)$,
or $\K$ has characteristic $2$ and $S$ is congruent to $\WA_{n,0,r+1}(\K)$. \\
\item If $r=2s+1$ for some integer $s$, then
$$\dim S \leq \max\bigl(s_{n,0,r},s_{n,s,1}\bigr),$$
and if equality occurs then  $S$ is congruent to $\WS_{n,s,1}(\K)$ or to $\WS_{n,0,r}(\K)$.
\end{enumerate}
\end{theo}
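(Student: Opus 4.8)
The plan is to prove Theorem~\ref{oldtheosym} by induction on $r$ (the case $r = 0$ being trivial), with $n$ kept free throughout. The first move is to reduce to the case $\urk S = r$. If $\urk S = \rho < r$, write $\rho = 2s' + \epsilon'$ with $\epsilon' \in \{0,1\}$; the induction hypothesis gives $\dim S \le \max\bigl(s_{n,0,\rho},\,s_{n,s',\epsilon'}\bigr)$, and one checks directly that $s_{n,0,\rho} = \binom{\rho+1}{2} < \binom{r+1}{2} = s_{n,0,r}$ and that $s_{n,s',\epsilon'} < s_{n,s',r-2s'}$ (because $r - 2s' \ge \epsilon' + 1$), the latter being at most $\max\bigl(s_{n,0,r},\,s_{n,\lfloor r/2\rfloor,\,r \bmod 2}\bigr)$ by the strict convexity of the sequence $(s_{n,s,r-2s})_s$ recalled above. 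So $\dim S$ is then strictly below the announced bound, and in particular equality forces $\urk S = r$; I assume this from now on.

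Fix $M_0 \in S$ with $\rk M_0 = r$. Up to congruence $M_0 = J \oplus 0_{n-r}$ with $J \in \GL_r(\K) \cap \Mats_r(\K)$, and I write each $M \in S$ as $M = \begin{bmatrix} A(M) & B(M) \\ B(M)^T & C(M) \end{bmatrix}$ with $A(M)$ of size $r \times r$. The basic local step is to extract the constraints imposed by $\rk(M_0 + tM) \le r$: expanding the Schur complement of the generically invertible corner $J + tA(M)$ gives $C(M) = 0$ together with the chain of identities $B(M)^T J^{-1}\bigl(A(M)J^{-1}\bigr)^k B(M) = 0$ for all $k \ge 0$. Over a field with more than $r+1$ elements this is a genuine polynomial identity in $t$; over a small field it must be replaced by a field-independent argument (base change to a large extension, or a direct estimate on $(r+1)\times(r+1)$ minors), and carrying this out uniformly is one of the two places where the real difficulty lies.

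Now set $\calB := \sum_{M \in S} \im B(M) \subseteq \K^r$. Polarizing the $k = 0$ identity shows that each $\im B(M)$ is totally singular for the quadratic form $Q\colon x \mapsto x^T J^{-1} x$, while the cross-terms $B(M)^T J^{-1} B(M')$ are alternating; the heart of the matter is to use the higher identities ($k \ge 1$) and the symmetry of the $A(M)$ to control the size and position of $\calB$ relative to $J$. The argument bifurcates here. The delicate case is characteristic $2$ with $J$ alternating: then $Q$ is identically zero, $\calB$ is unconstrained, and a separate analysis shows that in the extremal situation $S$ is congruent to $\WA_{n,0,r+1}(\K)$ (which requires $r$ even, so that this space has upper-rank $r$) --- this is the source of the exceptional clause in part~(a). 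When $J$ is not alternating one shows $\dim \calB =: d \le \lfloor r/2 \rfloor$ and, after a further congruence bringing $J$ to Witt normal form and $\calB$ to standard position, that $S$ acquires a rigid block shape which realizes it, up to congruence, as an $\WS_{n,d,r-2d}(\K)$-type configuration carrying a ``core'' space of symmetric matrices of size $n - 2d$ and upper-rank $\le r - 2d$. Feeding the core into the induction hypothesis and summing dimensions yields a bound whose maximum over $d$, by the strict convexity of the sequences $(s_{n,s,r-2s})_s$, is attained only at $d = 0$ and $d = \lfloor r/2 \rfloor$, with respective values $s_{n,0,r}$ and $s_{n,\lfloor r/2 \rfloor,\,r \bmod 2}$; every intermediate value of $d$ gives a strictly smaller dimension.

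Finally, to identify the extremal spaces one traces equality back through the count. When $d = 0$ (and $J$ not alternating), $\calB = 0$ forces $B(M) = 0$ for all $M$, so $S \subseteq \Mats_r(\K) \oplus 0_{n-r}$, and $\dim S = \binom{r+1}{2}$ yields $S = \Mats_r(\K) \oplus 0_{n-r}$, i.e.\ $S$ is congruent to $\WS_{n,0,r}(\K)$; in characteristic $2$ the alternating branch gives instead $\WA_{n,0,r+1}(\K)$. When $d = \lfloor r/2 \rfloor$, equality forces $J$ to be maximally split and the core to fill $\Mats_{r - 2d}(\K) \oplus 0$ with $r - 2d \in \{0,1\}$, which is exactly the defining shape of $\WS_{n,\lfloor r/2 \rfloor,\,r \bmod 2}(\K)$; which of the two candidate dimensions is actually the larger depends only on comparing $s_{n,0,r}$ with $s_{n,\lfloor r/2 \rfloor,\,r \bmod 2}$. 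The companion result for alternating matrices (Theorem~\ref{oldtheoalt}) is proved the same way and is somewhat cleaner, there being no characteristic-$2$ anomaly. As indicated, the two genuine obstacles are the failure of convenient normal forms over small fields, which forces the ``diagonalize a maximal-rank element'' philosophy of Meshulam to be replaced by arguments robust to finiteness, and the characteristic-$2$ phenomena around alternating forms, which produce the extra extremal example.
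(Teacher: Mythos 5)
Theorem~\ref{oldtheosym} is quoted in this paper from \cite{dSPsym} and is not reproved here, so strictly speaking there is no ``paper's own proof'' in this document to compare against; what can be compared is the strategy the paper attributes to \cite{dSPsym} (and reuses in Section~\ref{keylemmasection}), which is visibly different from yours. The cited approach reduces via the family of subspaces $S_H$ attached to linear hyperplanes $H$, proves existence of a hyperplane with $\dim S_H$ small (Lemma~2.10 of \cite{dSPsym}), and then inducts on $n$, lifting the structure of $P(S)$ to $S$; even when a maximal-rank element is normalized (as in the proof of Proposition~\ref{keylemmaalternate}), the totally singular subspace is produced through a specific mechanism: the spaces $T_{H_1}, T_{H_2}$ attached to two distinct hyperplanes containing $\K^r\times\{0\}$ are shown to be $b$-orthogonal and then \emph{equal} by bringing in a third hyperplane.

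The genuine gap in your sketch is exactly where you wave at ``the heart of the matter.'' From the $k=0$ Schur identities you obtain $B(M)^T J^{-1} B(M)=0$ for each $M$, and polarizing gives that $B(M)^T J^{-1} B(M')$ is skew-symmetric for every pair $M,M'\in S$. That is \emph{not} the statement that the span $\calB=\sum_M \im B(M)$ is totally singular for $b$: a sum of individually totally singular subspaces with pairwise skew-symmetric Gram matrices can fill up the whole hyperbolic space (e.g.\ two transversal Lagrangians), and then no bound of the form $\dim\calB\le\lfloor r/2\rfloor$ follows. You gesture at using the $k\ge1$ identities together with the symmetry of the $A(M)$, but you do not produce an argument, and it is precisely this step that is nontrivial --- the paper's route bypasses it via the three-hyperplane trick inside $S_H$ rather than by mining the higher Schur coefficients. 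Likewise, ``$S$ acquires a rigid block shape which realizes it as a $\WS$-type configuration carrying a core'' is the content of the structure theorem, not a consequence of knowing $\dim\calB$; nothing in the sketch shows that the $A(M)$ respect a common Witt decomposition compatible with $\calB$. Finally, you flag but do not resolve the small-field problem, and since the theorem is stated over arbitrary fields (including $\F_2$, where the characteristic-$2$ alternating branch is exactly the point), ``base change to a large extension'' does not obviously preserve the hypothesis $\urk S\le r$ as a statement about $S\otimes_\K \L$ when one needs to descend the congruence at the end. Your convexity and dimension bookkeeping at the end are fine in form, but they rest entirely on the unproved structural claim, so the argument as written does not establish the theorem.
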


Beware that our notation in this paper is different from the one in \cite{dSPsym}.
Here is the full list of correspondence between the two sets of notation.

\begin{center}
\begin{tabular}{| c | c |}
\hline
Notation in this article & Notation in \cite{dSPsym} \\
\hline
\hline
$a_{n,0,2s+1}$ & $a_{n,2s}^{(1)}$ \\
\hline
$a_{n,s,1}$ & $a_{n,2s}^{(2)}$ \\
\hline
$\WA_{n,0,2s+1}(\K)$ & $\widetilde{\Mata_{2s+1}(\K)}^{(n)}$ \\
\hline
$\WA_{n,s,1}(\K)$ & $\WA_{n,s}(\K)$ \\
\hline
$s_{n,0,r}$ & $s_{n,r}^{(1)}$ \\
\hline
$s_{n,s,0}$ & $s_{n,2s}^{(2)}$ \\
\hline
$s_{n,s,1}$ & $s_{n,2s+1}^{(2)}$ \\
\hline
$\WS_{n,0,r}(\K)$ & $\widetilde{\Mats_r(\K)}^{(n)}$ \\
\hline
$\WS_{n,s,0}(\K)$ & $\WS_{n,2s}(\K)$ \\
\hline
$\WS_{n,s,1}(\K)$ & $\WS_{n,2s+1}(\K)$ \\
\hline
\end{tabular}

\end{center}

\subsection{Main results}

We are ready to state our main theorem on large spaces of bounded-rank symmetric matrices.

\begin{theo}\label{maintheosym}
Assume that $\# \K>2$.
Let $S$ be a linear subspace of $\Mats_n(\K)$ and $r$ be an integer such that $1 < r < n$.
Write $r=2s+\epsilon$ for some positive integer $s$ and some $\epsilon \in \{0,1\}$.
Assume that $\K$ has characteristic not $2$ and that
$$\dim S >\max\left(s_{n,1,r-2},s_{n,s-1,2+\epsilon}\right) \quad \text{and} \quad \urk S \leq r.$$
Then, one of the following outcomes holds:
\begin{enumerate}[(i)]
\item $S$ is congruent to a subspace of $\WS_{n,0,r}(\K)$;
\item $S$ is congruent to a subspace of $\WS_{n,s,\epsilon}(\K)$;
\item $\K$ has characteristic $2$, $r$ is even and $S$ is congruent to a subspace of $\WA_{n,0,r+1}(\K)$.
\end{enumerate}
\end{theo}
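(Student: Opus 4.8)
The plan is to argue by induction, lexicographically on the pair $(r,n)$. Since $\operatorname{char}\K\neq 2$, outcome (iii) cannot occur, so it suffices to prove that $S$ is congruent to a subspace of $\WS_{n,0,r}(\K)$ or of $\WS_{n,s,\epsilon}(\K)$. Two preliminary reductions dispose of the degenerate configurations. If $\bigcap_{M\in S}\Ker M\neq\{0\}$, a congruence places $S$ inside $\Mats_{n-1}(\K)\oplus\{0\}$; when $r<n-1$ the identity $s_{n,j,t}=s_{n-1,j,t}+j$ shows the dimensional hypothesis survives in size $n-1$ and the inductive hypothesis applies (the relevant spaces $\WS$ in size $n-1$ embedding into those in size $n$), whereas when $r=n-1$ one has $\Mats_{n-1}(\K)\oplus\{0\}=\WS_{n,0,r}(\K)$ and (i) holds. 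If $\urk S<r$ then, the case $\urk S\leq 1$ being vacuous since it forces $\dim S\leq 1$, we apply the theorem with $r$ replaced by $r-1$: writing $r-1=2s'+\epsilon'$, the monotonicity of $t\mapsto s_{n,j,t}$ together with the inequality $s_{n,s-1,2}>s_{n,s-2,3}$ (valid because $r<n$, and needed only when $\epsilon=0$) gives $\max(s_{n,1,r-2},s_{n,s-1,2+\epsilon})\geq\max(s_{n,1,r-3},s_{n,s'-1,2+\epsilon'})$, so the inductive hypothesis yields that $S$ is congruent to a subspace of $\WS_{n,0,r-1}(\K)\subseteq\WS_{n,0,r}(\K)$ or of $\WS_{n,s',\epsilon'}(\K)\subseteq\WS_{n,s,\epsilon}(\K)$.

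Hence we may assume $\urk S=r$ and $\bigcap_{M\in S}\Ker M=\{0\}$. Pick $A\in S$ with $\rk A=r$; after a congruence $A=\begin{bmatrix}J&0\\0&0\end{bmatrix}$ with $J\in\GL_r(\K)\cap\Mats_r(\K)$, and congruences by block-diagonal matrices $\begin{bmatrix}P&0\\0&Q\end{bmatrix}$ remain available. Writing each $M\in S$ as $\begin{bmatrix}A_M&B_M\\B_M^T&C_M\end{bmatrix}$, the structural heart of the argument is to prove that $C_M=0$ for all $M$ and that the column space of every $B_M$ is totally isotropic for the form attached to $J$; the latter follows from $\urk S\leq r$ and the Schur complement identity $\rk\begin{bmatrix}J&B_M\\B_M^T&0\end{bmatrix}=r+\rk(B_M^T J^{-1}B_M)$. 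Because a totally isotropic subspace of $(\K^r,J)$ has dimension at most $\lfloor r/2\rfloor=s$, this gives $\rk B_M\leq s$ for every $M$; so, setting $\calB:=\{B_M:M\in S\}\subseteq\Mat_{r,n-r}(\K)$ and $S_0:=\{M\in S:B_M=0\}\subseteq\Mats_r(\K)\oplus\{0\}$, we obtain $\dim S=\dim S_0+\dim\calB$ with $\dim S_0\leq\binom{r+1}{2}$ and $\urk\calB\leq s$.

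Together with the hypothesis $\dim S>\max(s_{n,1,r-2},s_{n,s-1,2+\epsilon})$, the bound on $\dim S_0$ forces $\dim\calB$ to be close to its largest possible value, so the classification of large bounded-rank spaces of rectangular matrices (Flanders's theorem refined by Atkinson--Lloyd--Beasley, in the form available over fields with more than two elements \cite{dSPboundedrankv2}) applies to $\calB$: either all the $B_M$ have range in a fixed $s$-dimensional subspace of $\K^r$, or they all vanish on a fixed $(n-r-s)$-dimensional subspace of $\K^{n-r}$. In the first case, a block-diagonal congruence sends the common range onto $\Vect(e_1,\dots,e_s)$; using once more the $J$-isotropy of the columns of the $B_M$---which confines the coupling of the last $n-r$ coordinates with $J$ to an $\epsilon$-dimensional complement---and absorbing the top-left blocks appropriately, $S$ is brought inside $\WS_{n,s,\epsilon}(\K)$, which is outcome (ii). In the second case, since $C_M=0$ the common kernel of the $B_M$ produces a nonzero common kernel vector of $S$, contradicting our reduction---unless $n-r-s\leq 0$, a narrow low-dimensional range that has to be handled directly. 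Finally, the exact shape of the threshold makes the alternative sharp: by the strict convexity of $j\mapsto s_{n,j,r-2j}$ on $\{0,\dots,s\}$, any intermediate configuration in which all ranges of the $B_M$ lie in a common $j$-dimensional subspace with $1\leq j\leq s-1$ would force $\dim S\leq s_{n,j,r-2j}\leq\max(s_{n,1,r-2},s_{n,s-1,2+\epsilon})$, against our hypothesis; what remains is precisely the outcomes (i) and (ii).

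The principal obstacle will be the structural claims of the second paragraph---$C_M=0$ and the $J$-isotropy of the $B_M$---over a small field: over an infinite field one perturbs $A$ into $A+tM$ and reads off the vanishing from a generic scalar $t$, but with only finitely many (and possibly very few) scalars available this breaks down, and a more robust argument is needed, presumably combining perturbations within $S_0$, a direct study of the pencils $\K A+\K M$, and the characteristic-not-$2$ hypothesis to exhibit non-isotropic test vectors. A secondary difficulty is the quantitative bookkeeping required to place $\dim\calB$ in the exact range where the Atkinson--Lloyd--Beasley classification yields the clean dichotomy above, together with the case-by-case treatment of the intermediate regime $r<n<2r$, where the off-diagonal block space may be too small for that classification to apply.
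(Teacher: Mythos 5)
Your proposal takes a genuinely different route from the paper and, as you yourself flag, the central structural claim is not established. Let me be concrete about where it fails.

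Your heart-of-the-argument claim is that, after normalizing a rank-$r$ matrix $A\in S$ to $\begin{bmatrix}J&0\\0&0\end{bmatrix}$, every $M=\begin{bmatrix}A_M&B_M\\B_M^T&C_M\end{bmatrix}\in S$ must satisfy $C_M=0$ and $B_M^T J^{-1}B_M=0$. You assert this "follows from $\urk S\leq r$ and the Schur complement identity," but the Schur lemma applies to $A+tM$ only for those $t$ with $J+tA_M$ invertible, and what it then gives is $C_M=tB_M^T(J+tA_M)^{-1}B_M$, which is a $t$-dependent constraint. Clearing denominators, the relevant object is the matrix polynomial $\det(J+tA_M)\cdot\bigl(tC_M-t^2B_M^T(J+tA_M)^{-1}B_M\bigr)$ of degree at most $r+1$ in $t$; forcing all of its coefficients to vanish (hence the two desired identities by reading the first two nontrivial Taylor coefficients) requires it to vanish at more than $r+1$ points, i.e. $\#\K>r+1$. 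The theorem only assumes $\#\K>2$, so this argument collapses as soon as $r$ is moderately large compared to the field size. Worse, when $\det(J+tA_M)$ vanishes for every nonzero $t\in\K$ (which can happen when $\#\K\leq r+1$), the Schur identity gives no information at all for those $M$. So the claim is simply not proved by the proposed means, and it is not clear it can be recovered in this form. This is not a minor technical patch: it is the load-bearing wall of your second and third paragraphs, and everything downstream (the rank bound $\rk B_M\leq s$, the dimension count on $\calB$, the invocation of the rectangular classification, the "absorbing the top-left blocks" step) depends on it.

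For the record, the paper proceeds quite differently. Rather than fixing a rank-$r$ matrix and analyzing the off-diagonal block space globally, it isolates a linear hyperplane $H$ for which the space $S_H$ of matrices totally singular on $H$ has small dimension (this is the content of Propositions \ref{keylemmasymmetriceven}, \ref{keylemmasymmetricodd} and \ref{keylemmasymmetricsubfull}, which themselves use a local Schur-complement argument but only on elements of $S_H$ where the top-left block vanishes, so that no polynomial-degree issue arises), splits every matrix along the corresponding coordinate to obtain $P(S)\subset\Mats_{n-1}(\K)$ and, when $m>0$, a further quotient $K(S)\subset\Mats_{n-2}(\K)$ controlled by Corollary \ref{extractioncorsym}, and then runs an induction on $n$ with lifting propositions (\ref{lifting1sym}, \ref{lifting2symeven}, \ref{lifting2symodd}) to pass from the inductive conclusion on $P(S)$ or $K(S)$ back to $S$. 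The case $r=n-1$ (where the hyperplane propositions give a weaker bound $m\leq n-3$) receives an entirely separate and substantially longer treatment in Subsection 6.3. None of this machinery appears in your sketch, and your sketch does not supply a substitute that is valid over every field with more than two elements.

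Two secondary remarks. First, your reduction steps (common kernel, and the descent from $\urk S<r$ to $r-1$) are correct, including the bookkeeping $s_{n,j,t}=s_{n-1,j,t}+j$ and the inequality $s_{n,s-1,2}>s_{n,s-2,3}$; this much is sound. Second, the last step of your argument, ruling out the "intermediate" $j$ with $1\leq j\leq s-1$ by convexity, is not a complete dichotomy: the bound $\dim S\leq s_{n,j,r-2j}$ only follows if the entire space, not merely $\calB$, has been shown to be contained in $\WS_{n,j,r-2j}(\K)$, and that containment requires precisely the structural identities you have not proved.
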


The case of the field with two elements appears to be much more complicated, and we will not tackle it here.
We are confident that by using a similar strategy good results can be obtained on this difficult case.

Now, here is our result for spaces of alternating matrices. This time around, we do not exclude fields with two elements.

\begin{theo}\label{maintheoalt}
Let $S$ be a linear subspace of $\Mata_n(\K)$, and $r=2s$ be an even integer such that $0<r<n$.
Assume that
$$\dim S >\max\left(a_{n,1,r-1},a_{n,s-1,3}\right) \quad \text{and} \quad \urk S \leq r.$$
Then, $S$ is congruent to a subspace of $\WA_{n,0,r+1}(\K)$ or to a subspace of $\WA_{n,s,1}(\K)$.
\end{theo}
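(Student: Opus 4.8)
The proof runs by induction on $n$, keeping throughout the hypotheses $\dim S>\max(a_{n,1,r-1},a_{n,s-1,3})$ and $\urk S\le r$, and writing $r=2s$.

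\emph{Reductions.} The case $s=1$ is vacuous: by Theorem~\ref{oldtheoalt}, $\urk S\le 2$ already forces $\dim S\le\max(a_{n,0,3},a_{n,1,1})$, contradicting the assumption. So assume $s\ge2$. The case $n=r+1$ is immediate, since then every matrix of $\Mata_{r+1}(\K)$ has even rank at most $r$, so $\urk S\le r$ holds automatically and $S\subseteq\Mata_{r+1}(\K)=\WA_{r+1,0,r+1}(\K)$. Assume now $n\ge r+2$. If $\urk S<r$ then $\urk S\le r-2$, and Theorem~\ref{oldtheoalt} gives $\dim S\le\max(a_{n,0,2s-1},a_{n,s-1,1})$, which an elementary comparison of binomial coefficients shows to be at most $\max(a_{n,1,r-1},a_{n,s-1,3})$ --- a contradiction; so $\urk S=r$. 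Finally, if $S$ has a non-zero common kernel, a congruence brings a non-zero vector of it to $e_n$, so that every matrix of $S$ has vanishing last row and column; then $S$ identifies with a subspace of $\Mata_{n-1}(\K)$ of upper rank at most $r$, and since each $a_{n,s',t'}$ is non-decreasing in $n$, the induction hypothesis --- together with the inclusions $\WA_{n-1,0,r+1}(\K)\subseteq\WA_{n,0,r+1}(\K)$ and $\WA_{n-1,s,1}(\K)\subseteq\WA_{n,s,1}(\K)$ --- settles this case. Henceforth $S$ has no non-zero common kernel; since $n\ge r+2$ and $\WA_{n,0,r+1}(\K)$ has a non-zero common kernel when $n>r+1$, this already excludes outcome (i), so it remains to prove that $S$ is congruent to a subspace of $\WA_{n,s,1}(\K)$.

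\emph{The structural lemma.} Pick $A_0\in S$ with $\rk A_0=r$ and, after a congruence, write it in the reduced form
$$A_0=\begin{bmatrix}0&I_s&0\\-I_s&0&0\\0&0&0\end{bmatrix}$$
with diagonal blocks of sizes $s$, $s$, $n-2s$, and set $V:=\Ker A_0=\Vect(e_{2s+1},\dots,e_n)$, of dimension $n-2s$. The heart of the argument is the claim that \emph{every} $A\in S$ vanishes on $V$: writing $A=\begin{bmatrix}B_A&C_A\\-C_A^T&W_A\end{bmatrix}$ with $B_A\in\Mata_{2s}(\K)$, $C_A\in\Mat_{2s,n-2s}(\K)$ and $W_A\in\Mata_{n-2s}(\K)$, one has $W_A=0$. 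Over an infinite field this follows from a Schur-complement computation applied to $A_0+tA$: its top-left $2s\times2s$ block is invertible for all but finitely many $t$, so $\rk(A_0+tA)\le r$ forces the corresponding Schur complement to vanish, and letting $t\to0$ yields $W_A=0$. Over a finite field genericity in $t$ is unavailable; one instead works on the rank-$r$ locus $\calR=\{A\in S:\rk A=r\}$ and shows directly that a non-zero entry of some $W_A$ would produce, inside a suitable element of $S$, a non-degenerate alternating subspace of dimension $2s+2$, contradicting $\urk S\le r$. Carrying this last point out uniformly over all fields (in particular over $\F_2$) is, I expect, the main obstacle. Granting the claim, a short computation --- reduce $C_A$ to a partial identity by a congruence and read off the rank of the resulting alternating matrix --- shows moreover that $\rk C_A\le s$ for every $A\in S$.

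\emph{Reduction to Atkinson--Lloyd--Beasley, and conclusion.} By the structural lemma, $S$ is contained in $\{A\in\Mata_n(\K):A|_V=0\}$, on which the linear maps $A\mapsto B_A$ and $A\mapsto C_A$ induce a direct-sum decomposition. Put $\calC:=\{C_A:A\in S\}\subseteq\Mat_{2s,n-2s}(\K)$ and $\calC_0:=\{A\in S:C_A=0\}$, so that $\calC_0$ identifies with a subspace of $\Mata_{2s}(\K)$ and $\dim S=\dim\calC+\dim\calC_0\le\dim\calC+\binom{2s}{2}$. Since $\dim S$ exceeds the threshold, $\dim\calC$ is large, and as $\urk\calC\le s$ the theorem of Atkinson, Lloyd and Beasley forces $\calC$ to be contained in a compression space of $\Mat_{2s,n-2s}(\K)$. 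The ``domain-side'' possibility --- all matrices of $\calC$ vanishing on a common non-zero subspace of $\K^{n-2s}$ --- is excluded, since that subspace would lie in the common kernel of $S$ (the narrow ranges $n\le3s$, where the bound $\urk\calC\le s$ degenerates, are dealt with by a direct dimension count). Hence, after one more congruence acting only on the first $2s$ coordinates so as to bring the common range of the $C_A$'s to $\Vect(e_1,\dots,e_s)$, every $A\in S$ acquires the block shape of an element of $\WA_{n,s,s}(\K)$. It then remains to improve $\WA_{n,s,s}(\K)$ to $\WA_{n,s,1}(\K)$: using once more that $\urk S\le2s$, which is strictly smaller than $\urk\WA_{n,s,s}(\K)$ when $s\ge2$, one shows that the component of $S$ lying in the middle $s\times s$ alternating block must be small, and a final comparison with the threshold pushes it, hence $S$, into $\WA_{n,s,1}(\K)$. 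The intermediate compression spaces supplied by Atkinson--Lloyd--Beasley, and the alternative route of peeling off a line $\K x$ with $x\in V$ and $\dim Sx=1$ and invoking the induction hypothesis on $\Mata_{n-1}(\K)$, are handled in the same spirit. $\square$
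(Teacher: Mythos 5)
Your route is genuinely different from the paper's. The paper inducts on $n$ by first producing a linear hyperplane $H$ with $\dim S_H$ small (Proposition \ref{keylemmaalternate}), then passing to $P(S)$ and to the extraction $K(S)$, and finally lifting back (Propositions \ref{lifting1alt} and \ref{lifting2alt}). You instead work directly relative to one rank-$r$ matrix $A_0$ and its kernel $V$. This is an attractive idea but has two genuine gaps, the second of which persists even over $\C$.

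First, as you acknowledge, your structural lemma (\emph{every} $A\in S$ has $W_A=0$) is only established when the field has enough elements: the Schur-complement argument needs $B_{A_0}+tB_A$ invertible for some $t\neq 0$, which may fail when $\#\K\le 2s$, and the theorem is stated for all fields, $\F_2$ included. The suggested fix (exhibiting a nondegenerate alternating $(2s+2)$-dimensional subspace) is not carried out and is not at all routine. The paper avoids needing such a global statement: it uses only Lemma \ref{alternatingcornerlemma} (valid over any field), which gives a rank drop on a corner block but requires that $S$ contain the \emph{specific} matrix $E_{1,n}-E_{n,1}$; that matrix is furnished by the hyperplane analysis of Proposition \ref{keylemmaalternate}.

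Second, the Atkinson--Lloyd--Beasley reduction does not clear its threshold. You have $\calC\subseteq\Mat_{2s,n-2s}(\K)$ with $\urk\calC\le s$, and the A--L threshold for that block size (with $n\ge 4s$) is $\dim\calC>s(n-2s)-(n-3s)+1$. Your only control is $\dim\calC\ge\dim S-\binom{2s}{2}$, and the hypothesis gives $\dim S>\max(a_{n,1,r-1},a_{n,s-1,3})$. A direct computation gives $a_{n,1,r-1}-\binom{2s}{2}=n-2s$, which is far below the threshold, and $a_{n,s-1,3}-\binom{2s}{2}$ falls short of it by $\frac{s^2+3s-6}{2}>0$ for every $s\ge 2$. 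So for $s\ge 2$ neither branch of the hypothesis makes $\dim\calC$ large enough, and one cannot rule out $\dim\calC_0$ being close to $\binom{2s}{2}$ without further argument. The paper's corresponding step (Proposition \ref{lifting2alt}) uses the finer block $\calT\subseteq\Mat_{n-1-s,s+1}(\K)$, whose dimension bound exceeds the A--L threshold by exactly one; the narrower block sizes there are a product of the inductive knowledge that $K(S)$ (or $P(S)$) is already inside a $\WA$-type space, which your argument does not have at that point. Finally, the concluding passage from $\WA_{n,s,s}(\K)$ to $\WA_{n,s,1}(\K)$ and the treatment of $n\le 3s$ are only gestured at.
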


In the above two theorems, we cannot have $s=1$ for the assumption on the dimension of $S$ would
lead to a contradiction with Theorems \ref{oldtheoalt} and \ref{oldtheosym}. Understanding this special case will however be necessary
in the proofs of these theorems, and hence we shall include a discussion of it.

\begin{prop}\label{altrank2}
Let $S$ be a linear subspace of $\Mata_n(\K)$ such that $\dim S>3$ and $\urk S \leq 2$.
Then, $S$ is congruent to a subspace of $\WA_{n,1,1}(\K)$.
\end{prop}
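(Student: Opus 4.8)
The plan is to reduce the proposition to an elementary fact about families of $2$-dimensional subspaces, using the normal form of alternating matrices. Recall that an alternating matrix has even rank and that every alternating matrix of rank $2$ equals $xy^T-yx^T$ for some linearly independent $x,y\in\K^n$, with $\Vect(x,y)$ equal both to its column space and to its row space. Hence every nonzero $A\in S$ has rank exactly $2$; write $\im A$ for its column space, a $2$-dimensional subspace of $\K^n$. The first step is the observation that the column spaces of two nonzero elements $A,B$ of $S$ always meet nontrivially: if $A=xy^T-yx^T$ and $B=uv^T-vu^T$ with $\im A\cap\im B=\{0\}$, then $x,y,u,v$ are linearly independent, so, rank being invariant under congruence, $\rk(A+B)$ equals the rank of a direct sum of two nonzero $2\times 2$ alternating blocks, namely $4$, contradicting $\urk S\leq 2$.

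Second, I would prove the following lemma: if $\mathcal F$ is a nonempty family of $2$-dimensional subspaces of a vector space that pairwise intersect nontrivially, then either all members of $\mathcal F$ contain a common line, or all members of $\mathcal F$ are contained in a common $3$-dimensional subspace. To see this, one may assume the members are not all equal, fix two distinct ones $P_1$ and $P_2$, and set $W:=P_1+P_2$ (which has dimension $3$) and $\ell_0:=P_1\cap P_2$ (a line). For any member $P\notin\{P_1,P_2\}$, the lines $P\cap P_1$ and $P\cap P_2$ are either distinct, in which case they span $P$ and so $P\subseteq W$, or equal, in which case their common value, being contained in $P_1\cap P_2=\ell_0$, must equal $\ell_0$, so that $\ell_0\subseteq P$. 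If every member contains $\ell_0$ we are in the first alternative; otherwise there is a member $Q$ with $\ell_0\not\subseteq Q$, hence $Q\subseteq W$, and one checks that then every member $P$ lies in $W$: this is clear when $\ell_0\not\subseteq P$, and when $\ell_0\subseteq P$ the line $P\cap Q$ differs from $\ell_0$ (else $\ell_0\subseteq Q$), so $P$ is spanned by the two lines $\ell_0$ and $P\cap Q$, both of which lie in $W$.

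Applying this lemma to the family of column spaces of the nonzero elements of $S$, the second alternative is impossible: were all those column spaces contained in a fixed $3$-dimensional subspace $W$, then, since $\im A$ is also the row space of $A$, every $A\in S$ would vanish outside the $3\times 3$ principal block indexed by a basis of $W$, giving $\dim S\leq\dim\Mata_3(\K)=3$, against the hypothesis. Hence there is a nonzero vector $v\in\K^n$ with $v\in\im A$ for every nonzero $A\in S$. For such an $A$, writing $\im A=\Vect(v,t)$, the alternating matrices whose column space is contained in $\Vect(v,t)$ form the line $\K\,(vt^T-tv^T)$, so $A=vw^T-wv^T$ for some $w\in\K^n$; thus $S\subseteq\{vw^T-wv^T\mid w\in\K^n\}$. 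Since $\dim S>3$ forces $n\geq 4$, the space $\WA_{n,1,1}(\K)$ is defined and equals $\{e_1w^T-we_1^T\mid w\in\K^n\}$; choosing $P\in\GL_n(\K)$ with $Pe_1=v$ yields $\{vw^T-wv^T\mid w\in\K^n\}=P\,\WA_{n,1,1}(\K)\,P^T$, so $S$ is congruent to a subspace of $\WA_{n,1,1}(\K)$.

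The main obstacle is the lemma on families of $2$-planes: the delicate point is to show that the dichotomy is exhaustive — a common line, or confinement to a single $3$-dimensional subspace, with nothing in between — and to dispose cleanly of the degenerate cases (a single member, or exactly two). Once that is in hand, everything else is routine bookkeeping with the normal form of alternating matrices.
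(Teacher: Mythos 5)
Your proof is correct, and it takes a genuinely different route from the paper's. The paper proves the proposition by induction on $n$, as an instance of its general two-step machinery: first invoke Proposition \ref{keylemmaalternate} to either conclude directly or produce a hyperplane $H$ with $S_H=\{0\}$, then apply the ``lifting'' Lemma \ref{lifting2altrank2} after using the induction hypothesis on $P(S)$. You instead exploit the classical correspondence between rank-$2$ alternating matrices and $2$-dimensional subspaces (each nonzero $A\in S$ equals $xy^T-yx^T$, with $\im A=\Vect(x,y)$), observe that $\urk S\leq 2$ forces the images of any two nonzero elements to meet, and then invoke the elementary dichotomy for families of pairwise-intersecting $2$-planes: they either share a common line or lie in a single $3$-dimensional subspace. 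The second alternative gives $\dim S\leq\dim\Mata_3(\K)=3$, ruled out by hypothesis, and the first pins each $A$ down to $vw^T-wv^T$ for a fixed $v$, i.e.\ to a congruate of $\WA_{n,1,1}(\K)$. Your argument is more self-contained (no reliance on the extraction and lifting lemmas, no induction on $n$), purely geometric, and valid over any field including $\F_2$; the paper's route, by contrast, is essentially ``free'' once its apparatus for the general theorem has been set up, and that apparatus is reused for the higher-rank cases where no such clean picture via decomposable tensors is available. All the individual steps you give check out: the rank computation $\rk(A+B)=4$ when $\im A\cap\im B=\{0\}$, the dichotomy lemma (including the degenerate cases), the identification of $\{e_1w^T-we_1^T\mid w\in\K^n\}$ with $\WA_{n,1,1}(\K)$, and the observation $\dim S>3\Rightarrow n\geq 4$.
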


The next result is already known \cite{ChooLimNg,Lim}:

\begin{prop}\label{symrank2}
Assume that $\# \K>2$.
Let $S$ be a linear subspace of $\Mats_n(\K)$ such that $\dim S>3$ and $\urk S \leq 2$.
Then, $S$ is congruent to a subspace of $\WS_{n,1,0}(\K)$.
\end{prop}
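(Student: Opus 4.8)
The plan is to prove that all the bilinear forms carried by the matrices of $S$ vanish on a common hyperplane of $\K^{n}$, which is precisely what it means for $S$ to be congruent to a subspace of $\WS_{n,1,0}(\K)$. First one checks that we may assume $n\geq 4$: if $n=3$, Theorem~\ref{oldtheosym}(a) would give $\dim S\leq\max(s_{3,0,2},s_{3,1,0})=3$, against the hypothesis. Next, $S$ contains a matrix of rank exactly $2$ — otherwise every element of $S$ is a scalar multiple of some $vv^{T}$, and since the sum of two such matrices with linearly independent vectors has rank $2$, the space $S$ would be at most $1$-dimensional. Fixing such a matrix $A\in S$ and replacing $S$ by a congruent space, I would arrange that $A=\begin{bmatrix}A_{0}&0\\0&0\end{bmatrix}$ with $A_{0}\in\GL_{2}(\K)\cap\Mats_{2}(\K)$ in the upper-left corner; then $\Ker A$ is spanned by the last $n-2$ vectors of the canonical basis and $\im A$ by the first two.

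The heart of the argument is a Flanders-type statement: for every $M\in S$, written in blocks as $M=\begin{bmatrix}B(M)&C(M)\\ C(M)^{T}&D(M)\end{bmatrix}$ with $D(M)$ of size $n-2$, one has $D(M)=0$, i.e.\ $M(\Ker A)\subseteq\im A$. To prove this I would fix indices $i,j$ and consider the $3\times 3$ minor $\delta(t)$ of $A+tM$ built on rows $\{1,2,2+i\}$ and columns $\{1,2,2+j\}$. As a polynomial in $t$, its constant term is a $3\times 3$ minor of $A$, which vanishes since rows $3,\dots,n$ of $A$ are zero, and its $t^{3}$-coefficient is a $3\times 3$ minor of $M$, which vanishes since $\rk M\leq 2$; hence $\delta$ has degree at most $2$. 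Since $\rk(A+tM)\leq 2$ for all $t\in\K$, $\delta$ vanishes at every point of $\K$, so $\delta\equiv 0$ because $\#\K\geq 3$; expanding $\delta$ along its last column, the coefficient of $t$ equals $D(M)_{ij}\det A_{0}$, whence $D(M)_{ij}=0$. This is the only place where the hypothesis $\#\K>2$ enters. Consequently every $M\in S$ has the shape $\begin{bmatrix}B(M)&C(M)\\ C(M)^{T}&0\end{bmatrix}$; a rank count (the bottom $n-2$ rows span a space of dimension $\rk C(M)$ into which the top two rows must fit once $\rk C(M)=2$) then forces $\rk C(M)\leq 1$, and when $C(M)=cu^{T}$ with $u\neq 0$ a short computation shows that $\rk M\leq 2$ holds if and only if $c^{T}\operatorname{adj}(B(M))\,c=0$, i.e.\ if and only if $B(M)$ vanishes on the line $c^{\perp}$ of $\K^{2}$ orthogonal to $c$.

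Now I would introduce $\calQ:=\{C(M):M\in S\}\subseteq\Mat_{2,n-2}(\K)$ and $\overline{S}:=\{B\in\Mats_{2}(\K):\begin{bmatrix}B&0\\0&0\end{bmatrix}\in S\}$. Since $D(M)=0$ always, $M\mapsto C(M)$ has kernel isomorphic to $\overline{S}$, so $\dim S=\dim\calQ+\dim\overline{S}$; and $\calQ$ is a nonzero space of matrices of rank at most $1$ (nonzero, else $\dim S=\dim\overline{S}\leq 3$). If $\dim\calQ\leq 1$, then $\dim\overline{S}\geq 3$, so $\overline{S}=\Mats_{2}(\K)$; picking $M_{0}\in S$ with $C(M_{0})=c_{0}u_{0}^{T}\neq 0$, subtracting $B(M_{0})$ (allowed, as $\overline{S}=\Mats_{2}(\K)$) and adding an arbitrary $B\in\Mats_{2}(\K)$ would force $B(c_{0}^{\perp},c_{0}^{\perp})=0$ for every symmetric $B$, which is false. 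So $\dim\calQ\geq 2$, and by the classification of spaces of matrices of rank at most $1$ there are two cases: either all the $C(M)$ have a common row space $\langle u_{0}\rangle$, and then $\calQ=\K^{2}u_{0}^{T}$ is $2$-dimensional, or they have a common column space $\langle c_{0}\rangle$. In the first case, $M\mapsto c(M)$ defined by $C(M)=c(M)u_{0}^{T}$ is onto $\K^{2}$, and adding to a preimage of $c$ an arbitrary element of $\overline{S}$ shows every $B_{0}\in\overline{S}$ satisfies $B_{0}(c^{\perp},c^{\perp})=0$ for all $c\neq 0$; thus $\overline{S}$ consists of alternating matrices, so $\dim\overline{S}\leq 1$ and $\dim S\leq 3$, a contradiction. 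Hence the second case must occur.

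So there is a fixed $c_{0}\neq 0$ with every $M\in S$ of the form $\begin{bmatrix}B(M)&c_{0}u(M)^{T}\\ u(M)c_{0}^{T}&0\end{bmatrix}$; applying the rank criterion when $u(M)\neq 0$, and adding a fixed element of $S$ with $u\neq 0$ when $u(M)=0$, one gets that $B(M)$ vanishes on $c_{0}^{\perp}$ for every $M\in S$. Let $V$ be the hyperplane $\{x\in\K^{n}:c_{0}^{T}x'=0\}$, where $x'$ denotes the projection of $x$ onto the first two coordinates. For $x,y\in V$ one computes $x^{T}My=(x')^{T}B(M)y'$, and since $x'$ and $y'$ then lie on the line $\langle c_{0}^{\perp}\rangle$, this equals a scalar times $B(M)(c_{0}^{\perp},c_{0}^{\perp})=0$. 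Thus every bilinear form in $S$ vanishes on $V$, and we are done. I expect the Flanders-type step of the second paragraph to be the main obstacle: it is exactly there that $\#\K>2$ is needed, since over $\F_{2}$ the polynomial $\delta$ is only known to have two roots and one can no longer deduce $\delta\equiv 0$.
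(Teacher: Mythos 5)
Your proof is correct, but it follows a genuinely different route from the paper's. The paper proves Proposition~\ref{symrank2} by induction on $n$: it applies Proposition~\ref{keylemmasymmetriceven} to conclude either that $S$ is already congruent to a subspace of $\WS_{n,1,0}(\K)$, or that there is a linear hyperplane $H$ with $S_H=\{0\}$; in the latter case $P(S)$ (the compression to $\Mats_{n-1}(\K)$) still has dimension $>3$ and upper-rank $\leq 2$, so the inductive hypothesis puts $P(S)$ into $\WS_{n-1,1,0}(\K)$ after congruence, and Lemma~\ref{lifting2symrank2} lifts this back to $S$. Your argument is non-inductive and entirely self-contained: you normalize a rank-$2$ matrix $A$ to the top-left corner, kill the bottom-right block $D(M)$ by a $3\times 3$-minor polynomial argument (this is precisely where $\#\K>2$ enters, since the relevant minor is a degree-$\leq 2$ polynomial in $t$ that vanishes on all of $\K$), observe $\rk C(M)\leq 1$ by the standard zero-block estimate, and then analyze the rank-$\leq 1$ space $\calQ=C(S)$ with the rank-one classification to extract a common column direction $c_0$, which gives the common totally isotropic hyperplane. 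What each approach buys: the paper's inductive machinery (the hyperplane proposition, the lifting lemmas) is built for the general theorem and reused there, so the short base-case proof is cheap given that infrastructure; your approach trades that reuse for a shorter, elementary, direct argument that is essentially a symmetric analogue of the Flanders/Atkinson--Lloyd compression argument in the rank-$2$ case and does not need Propositions~\ref{keylemmasymmetriceven} or~\ref{lifting2symrank2} at all.

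Two small remarks, neither of which affects correctness. First, after establishing $B_{0}(c^{\perp},c^{\perp})=0$ for every $B_{0}\in\overline{S}$ and every $c\neq 0$, you conclude $\dim\overline{S}\leq 1$; this is exact in characteristic~$2$ (where $\Mata_{2}(\K)\cap\Mats_{2}(\K)$ is one-dimensional), while in characteristic $\neq 2$ you actually get $\overline{S}=\{0\}$ --- either way the contradiction $\dim S\leq 3$ holds. Second, when you invoke the classification of linear spaces of rank-$\leq 1$ matrices, you implicitly use that $\dim\calQ\geq 2$ already rules out the degenerate one-dimensional possibility, which is fine, but it would be worth a phrase to flag that the dichotomy common-row-space/common-column-space is only available once the dimension is at least $2$.
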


The following one seems to be new:

\begin{prop}\label{symrank3}
Assume that $\# \K>2$.
Let $S$ be a linear subspace of $\Mats_n(\K)$ such that $\dim S>6$ and $\urk S \leq 3$.
Then, $S$ is congruent to a subspace of $\WS_{n,1,1}(\K)$.
\end{prop}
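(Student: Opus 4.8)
The plan is to bootstrap from three facts already available: Proposition \ref{symrank2} (the rank-$2$ case), Theorem \ref{oldtheosym}(b) applied to $4$ by $4$ matrices, and the elementary classification of linear spaces of matrices of rank at most $1$. First I would dispose of the trivial reduction: if $\urk S\leq 2$, then Proposition \ref{symrank2} gives $S$ congruent to a subspace of $\WS_{n,1,0}(\K)\subseteq\WS_{n,1,1}(\K)$, and we are done. So assume $\urk S=3$ and fix $A\in S$ with $\rk A=3$. After replacing $S$ by a congruent space I may assume $A=A'\oplus 0_{n-3}$ with $A'\in\GL_3(\K)\cap\Mats_3(\K)$; for $M\in S$ I write $B_M\in\Mats_3(\K)$ for its leading $3$ by $3$ block, $C_M\in\Mat_{3,n-3}(\K)$ for its off-diagonal block, and $D_M\in\Mats_{n-3}(\K)$ for its trailing block.

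The first genuine step is to prove that $D_M=0$ for every $M\in S$, i.e.\ that $S\subseteq\WS_{n,3,0}(\K)$. For every scalar $\lambda$ outside the at most three roots of $\lambda\mapsto\det(\lambda A'+B_M)$, a Schur complement computation gives $\rk(\lambda A+M)=3+\rk\bigl(D_M-C_M^T(\lambda A'+B_M)^{-1}C_M\bigr)$, so $\urk S\leq 3$ forces the identity $\det(\lambda A'+B_M)\,D_M=C_M^T\operatorname{adj}(\lambda A'+B_M)\,C_M$; the left-hand side is, entrywise, a polynomial in $\lambda$ of degree $3$ with leading coefficient $\det(A')\,D_M$, whereas the right-hand side has degree at most $2$, whence $D_M=0$. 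Once this is known, one computes $\dim\Ker M=\dim\bigl(\Ker C_M^T\cap B_M^{-1}(\im C_M)\bigr)+(n-3-\rk C_M)$, so that $\rk M\geq 2\rk C_M$, and therefore $\rk C_M\leq 1$ for all $M\in S$.

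Next I would look at the linear map $\Phi\colon S\to\Mat_{3,n-3}(\K)$, $M\mapsto C_M$. Its kernel embeds into $\Mats_3(\K)$, so $\dim\Phi(S)\geq\dim S-6\geq 1$; and in fact $\dim\Phi(S)\geq 2$, for otherwise $\dim S=7$ and $S$ would contain all of $\WS_{n,0,3}(\K)$ together with one matrix $M_0$ having $\rk C_{M_0}=1$, and testing the bounded-rank condition on the span of $\WS_{n,0,3}(\K)$ and $M_0$ against a well-chosen invertible symmetric $3$ by $3$ matrix (which exists since $\#\K>2$) yields a contradiction. Since $\Phi(S)$ is a linear space of matrices of rank at most $1$, the classical dichotomy applies: either (I) there is a fixed $u_0\in\K^3$ with $\im C_M\subseteq\K u_0$ for all $M$, or (II) there is a fixed $z_0\in\K^{n-3}$ with $C_M=w_M z_0^T$ for all $M$. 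In case (II), a congruence preserving $A'\oplus 0_{n-3}$ lets me take $z_0$ to be the first basis vector of $\K^{n-3}$; then every $M\in S$ is supported on the leading $4$ by $4$ block, so passing to that block yields an injective rank-preserving linear map from $S$ into $\Mats_4(\K)$ whose image has dimension $\dim S>6$ and upper rank $3$ — contradicting Theorem \ref{oldtheosym}(b). So case (I) holds.

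In case (I), a congruence preserving $A'\oplus 0_{n-3}$ brings $u_0$ to $e_1$, so $C_M=e_1 z_M^T$ with $z_M\in\K^{n-3}$, and $\{M\in S:z_M=0\}=\Ker\Phi$ has codimension at least $2$ in $S$. Whenever $z_M\neq0$, the bound $\rk M\leq 3$ forces the lower right $2$ by $2$ corner of $B_M$ (on the coordinates $2,3$) to be singular; as $M\mapsto\det$ of that corner is a quadratic form on $S$ vanishing off the codimension-$\geq 2$ subspace $\Ker\Phi$, it vanishes identically on $S$. Hence the space of those corners is a linear space of symmetric $2$ by $2$ matrices of rank at most $1$; such a space has dimension at most $1$, and after one last congruence acting only on coordinates $2,3$ it is $\{0\}$ or $\K E_{2,2}$. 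Either way $B_M\in\WS_{3,1,1}(\K)$ for every $M$, and combining this with $C_M=e_1 z_M^T$ and $D_M=0$ says precisely that each $M\in S$ is supported on the first row, the first column, and the $(2,2)$ entry; that is, $S\subseteq\WS_{n,1,1}(\K)$. The step I expect to be the main obstacle is the vanishing $D_M=0$ (and, similarly, the vanishing of the quadratic form in the last paragraph) over fields with very few elements, where the degree count in $\lambda$ is just barely insufficient and one must either argue about polynomial vanishing on finite sets more carefully or borrow a structural lemma from \cite{dSPsym}; the rank-at-most-$1$ bookkeeping that follows, though somewhat intricate, is routine.
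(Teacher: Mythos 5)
The approach you take is genuinely different from the paper's. The paper proves Proposition \ref{symrank3} by induction on $n$: it invokes Proposition \ref{keylemmasymmetricodd} (with $s=1$) to produce an $S$-adapted hyperplane $H$ with $S_H=\{0\}$ (unless $S$ is already congruent to a subspace of $\WS_{n,1,1}(\K)$), applies the induction hypothesis to $P(S)$, and then lifts via Lemma \ref{lifting2symrank3}, with $n=4$ vacuous by Theorem \ref{oldtheosym}. You instead normalize a rank-$3$ matrix $A\in S$ and analyze the blocks $B_M,C_M,D_M$ directly, appealing only to the rank-$\leq 1$ classification, Theorem \ref{oldtheosym} for $n\leq 4$, and elementary polynomial/Schur arguments. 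Most of the bookkeeping after Step 1 is correct: the $\dim\Phi(S)\geq 2$ lower bound, the rank-$\leq 1$ dichotomy, the case (II) contradiction via $\Mats_4(\K)$, and the quadratic-form argument in case (I) all go through. (In fact, the quadratic form step is safer than you fear: since $q(M_1)=0$ for $M_1\notin\Ker\Phi$, the identity $q(M_0+tM_1)=q(M_0)+t\cdot\beta(M_0,M_1)$ is \emph{linear} in $t$ and vanishes at $\geq 2$ nonzero values whenever $\#\K>2$, forcing $q(M_0)=0$.)

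The genuine gap is exactly where you flag it: the claim $D_M=0$. As written, the polynomial identity $\det(\lambda A'+B_M)\,D_M=C_M^T\operatorname{adj}(\lambda A'+B_M)\,C_M$ is established only for $\lambda$ with $\lambda A'+B_M$ invertible. Since $\det(\lambda A'+B_M)$ is a cubic in $\lambda$, there may be as few as $\#\K-3$ such $\lambda$, which is insufficient (possibly zero) for $\#\K\in\{3,4,5\}$. One can strengthen the argument by extracting the identity directly from the vanishing of the $4\times 4$ minors of $\lambda A+M$ on rows and columns $\{1,2,3,3+i\}$ and $\{1,2,3,3+j\}$, which yields the polynomial identity for \emph{all} $\lambda\in\K$ (not just where the Schur complement is legal); this closes the gap for $\#\K\geq 4$. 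But for $\#\K=3$ a degree-$3$ polynomial can vanish identically on $\mathbb{F}_3$ without being the zero polynomial (e.g.\ $\lambda^3-\lambda$), and the leading coefficient $\det(A')\,D_M$ is then not forced to vanish. The structural lemma you gesture at, Lemma \ref{symmetriccornerlemma2}, does not rescue this step: it is calibrated to perturbations $tN$ with a \emph{zero} leading block so that the Schur-complement pivot stays constant (and the degree in $t$ low), whereas your perturbation $\lambda A$ changes the pivot $\lambda A'+B_M$ itself, which is precisely the source of the excess degree. Since the proposition asserts the result for all $\K$ with $\#\K>2$, and $\#\K=3$ is in scope, the proposal as written is incomplete; the paper's hyperplane-and-lifting route sidesteps the problem because Proposition \ref{keylemmasymmetricodd}'s Schur computation is done with a fixed invertible pivot.
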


Now, let us compare our results with Loewy's. Here is a restatement of the latter into our notation system:

\begin{theo}[Loewy \cite{Loewy}]
Assume that the characteristic of $\K$ is not $2$.
Let $S$ be a linear subspace of $\Mats_n(\K)$, and $s$ be a positive integer such that $2s<n$ and $2n>5s+1$.
Assume that
$$\urk S \leq 2s, \quad \# \K \geq 2s+3 \quad \text{and} \quad \dim S > \max(s_{n,s-1,2},s_{n,0,2s}).$$
Then, $S$ is congruent to a subspace of $\WS_{n,s,0}(\K)$.
\end{theo}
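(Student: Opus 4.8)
The plan is to obtain this statement as a direct corollary of Theorem~\ref{maintheosym}, supplemented by Proposition~\ref{symrank2} for the degenerate value $s=1$; the only real content lies in verifying that Loewy's hypotheses are at least as strong as ours. First I would dispose of the case $s=1$: then $r:=2s=2$, Loewy's dimension assumption reads $\dim S>\max(s_{n,0,2},s_{n,0,2})=3$, and $\#\K\geq 2s+3=5>2$, so Proposition~\ref{symrank2} applies directly and yields that $S$ is congruent to a subspace of $\WS_{n,1,0}(\K)$, as claimed. From now on I would assume $s\geq 2$ and set $r:=2s$, so that $1<r<n$, $\#\K\geq 2s+3>2$ and $\urk S\leq r$.

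The key step is to recover the dimension inequality needed by Theorem~\ref{maintheosym}, namely $\dim S>\max\bigl(s_{n,1,r-2},s_{n,s-1,2}\bigr)$, from Loewy's weaker-looking bound $\dim S>\max\bigl(s_{n,s-1,2},s_{n,0,2s}\bigr)$. Here I would invoke the strict convexity, recalled in the introduction, of the finite sequence $u_j:=s_{n,j,2s-2j}$ with $0\leq j\leq s$: its truncation $u_0,u_1,\dots,u_{s-1}$ is again strictly convex, hence attains its maximum at one of its two endpoints, so that $u_1\leq\max(u_0,u_{s-1})$. Since $u_0=s_{n,0,2s}$, $u_1=s_{n,1,r-2}$ and $u_{s-1}=s_{n,s-1,2}$, Loewy's assumption gives $\dim S>\max(u_0,u_{s-1})\geq u_1$, and therefore $\dim S>\max\bigl(s_{n,1,r-2},s_{n,s-1,2}\bigr)$.

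Theorem~\ref{maintheosym} now applies with $\epsilon=0$. Since the characteristic of $\K$ is not $2$, outcome (iii) cannot occur, so either $S$ is congruent to a subspace of $\WS_{n,0,r}(\K)$, or $S$ is congruent to a subspace of $\WS_{n,s,0}(\K)$. The first possibility would force $\dim S\leq s_{n,0,r}=s_{n,0,2s}$, contradicting Loewy's dimension hypothesis; hence the second holds, which is precisely the desired conclusion.

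I do not anticipate a genuine obstacle: the only point requiring care is the elementary convexity estimate $u_1\leq\max(u_0,u_{s-1})$. What deserves emphasis is rather that the two extra conditions $2n>5s+1$ and $\#\K\geq 2s+3$ figuring in Loewy's original statement are mere artifacts of his argument and become unnecessary once Theorem~\ref{maintheosym} is available; conversely, a self-contained proof following the inductive strategy of Meshulam and Loewy would essentially amount to reproving a special case of Theorem~\ref{maintheosym}, and so is not worth carrying out here.
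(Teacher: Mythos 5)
Your proposal is correct and matches the paper's own indication of how Loewy's theorem follows: the paper's paragraph after the statement says precisely that for $s\geq 2$ the convexity of $\bigl(s_{n,k,2s-2k}\bigr)_{0\leq k\leq s}$ turns Loewy's dimension hypothesis into the one of Theorem~\ref{maintheosym}, and that the case $s=1$ is handled by Proposition~\ref{symrank2}. You merely spell this out (including the elimination of outcome (i) via $\dim S>s_{n,0,2s}$ and of outcome (iii) via the characteristic assumption), which is exactly the intended deduction.
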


Unless $s=1$, the condition $\dim S > \max(s_{n,s-1,2},s_{n,0,2s})$ implies $\dim S > \max(s_{n,s-1,2},s_{n,1,2s-2})$
since $(s_{n,k,2s-2k})_{0 \leq k \leq s}$ is a convex sequence. For low values of $s$ with respect to $n$,
one sees that the two conditions are actually equivalent. If $s=1$, then Loewy's result is
a consequence of Proposition \ref{symrank2}.

Thus, Loewy's result is essentially a version of Theorem \ref{maintheosym}, for low even upper-ranks only, and
with some unnecessary assumptions on the field. Unsurprisingly, our methods will be very different from Loewy's.
Following Meshulam, Loewy's strategy consisted in exploiting a connection with graph theory.
On the contrary, our methods are firmly rooted in linear algebra: They can be seen as an adaptation of
techniques that have helped advance the theory of large spaces of bounded-rank rectangular matrices in the recent past
(see \cite{dSPboundedrankv2}).

\subsection{Main strategy}

Let us give a few ideas of the strategy of proof of Theorems \ref{maintheosym} and \ref{maintheoalt}.
We will limit our discussion to the alternating case, since it is the one that involves the lesser amount of technicalities.

We shall follow the main ideas of \cite{dSPsym}. Let $S$ be a linear subspace of $\Mata_n(\K)$ with upper-rank
less than or equal to $r=2s$ for some even integer $r<n-1$. We wish to prove that if the dimension of $S$
is large enough, then $S$ is congruent to a subspace of $\WA_{n,s,1}(\K)$ or to a subspace of $\WA_{n,0,r+1}(\K)$.

Our starting point is a simple observation.
Throughout the article, we shall write every matrix $M$ of $\Mata_n(\K)$  as
$$M=\begin{bmatrix}
0 & [?]_{1 \times (n-2)} & ? \\
[?]_{(n-2) \times 1} & K(M) & [?]_{(n-2) \times 1} \\
? & [?]_{1 \times (n-2)} & 0
\end{bmatrix}=\begin{bmatrix}
P(M) & [?]_{(n-1) \times 1} \\
[?]_{1 \times (n-1)} & 0
\end{bmatrix}
$$
with $K(M) \in \Mata_{n-2}(\K)$ and $P(M) \in \Mata_{n-1}(\K)$.
Assume that the matrix $N:=E_{1,n}-E_{n,1}$ belongs to $S$. Then, for all $M \in S$,
by using the fact that $\rk(M+tN) \leq r$ for all $t$ in $\K$, it can be shown (see Lemma 2.4 of \cite{dSPsym})
that $\rk K(M) \leq r-2$. Hence, $K(S)$ is a linear subspace of $\Mata_{n-2}(\K)$ with upper-rank at most $r-2$.
This should allow us to perform an inductive proof!
Yet, to perform such a proof it is necessary to know that the dimension of $K(S)$ is large enough.
This can be done thanks to the rank theorem: denoting by $W$ the linear subspace of $S$ consisting of all its matrices $M$ such that $P(M)=0$,
and by $W'$ the linear subspace of $P(S)$ consisting of all its matrices with first row zero, we find that
$$\dim K(S)=\dim S-\dim W-\dim W'.$$
Hence, if we can ensure that $\dim W$ is small enough, then we will know that $\dim K(S)$ is large enough, so that we can use the induction hypothesis.
Now, say that this has been obtained \emph{and} that $K(S)$ is congruent to a subspace of $\WA_{n-2,s-1,1}(\K)$
(it could also happen that $K(S)$ be congruent to a subspace of $\WA_{n-2,0,r-1}(\K)$, but let us not get distracted by this side issue).
Then, we can simply assume that $K(S) \subset \WA_{n-2,s-1,1}(\K)$, whence $P(S) \subset \WA_{n-1,s,1}(\K)$.
From there, we wish to prove that $S$ is congruent to a subspace of $\WA_{n,s,1}(\K)$; in rough terms, we want to \emph{lift}
the structure of $P(S)$ to the one of $S$.
The basic argument is the following one: every matrix of $S$ now splits as
$$M=\begin{bmatrix}
[?]_{s \times s} & -B(M)^T & [?]_{s \times 1} \\
B(M) & [0]_{(n-s-1) \times (n-s-1)} & C(M) \\
[?]_{1 \times s} & -C(M)^T & 0
\end{bmatrix}$$
with $B(M) \in \Mat_{n-s-1,s}(\K)$ and $C(M) \in \K^{n-s-1}$.
Then, for every $M$ in $S$, we see that
$$\rk M \geq \rk A(M)+\rk(-A(M)^T)$$
where
$$A(M):=\begin{bmatrix}
B(M) & C(M)
\end{bmatrix} \in \Mat_{n-s-1,s+1}(\K).$$
Hence, $\urk A(S) \leq s$. As $n-s-1 \geq s+1$, if the dimension of $P(S)$ is large enough then
so is the dimension of $A(S)$, and then we can use the classification of large spaces of rectangular matrices with bounded rank, and more specifically
the theorem of Atkinson and Lloyd \cite{AtkLloyd} that was recently generalized to all fields \cite{dSPboundedrank}.
If the dimension of $B(S)$ is large enough, it will yield that, for some fixed vector $X \in \K^s$, we have $C(M)=B(M)X$ for all $M \in S$;
from that point it will be easy to conclude that $S$ is congruent to a subspace of $\WA_{n,s,1}(\K)$.

Now, let us come back to the general situation. We introduce the following notation:

\begin{Not}
Let $H$ be a linear hyperplane of $\K^n$ and $V$ be a subset of $\Mat_n(\K)$.
We denote by $V_H$ the set of all matrices $M \in V$ for which $H$ is totally singular,
that is
$$\forall (X,Y) \in H^2, \; X^T MY=0.$$
\end{Not}

For example, if $H=\K^{n-1} \times \{0\}$ then $V_H$ consists of all the matrices $M$ of $V$
that have the following shape
$$M=\begin{bmatrix}
[0]_{(n-1) \times (n-1)} & [?]_{(n-1) \times 1} \\
[?]_{1 \times (n-1)} & ?
\end{bmatrix};$$
hence, in the above notation, we have $W=S_H$.
Therefore, we can use the above technique provided that $S_H$ be non-zero (in the above setting, it has to contain $E_{1,n}-E_{n,1}$)
and yet with small dimension.

In general, it is known (see Lemma 2.10 from \cite{dSPsym}) that there exists a linear hyperplane $H$ of $\K^n$ such that
$\dim S_H \leq s$. Here, we shall need a more precise result: if $\dim S_H \geq s$ for every linear hyperplane $H$ of $\K^n$
then $S$ is congruent to $\WA_{n,s,1}(\K)$ (see Proposition \ref{keylemmaalternate}).
Hence, either our conclusion is satisfied or we can find a linear hyperplane $H$ of $\K^n$ such that $\dim S_H \leq s-1$.

It remains to explain what to do if we have a linear hyperplane $H$ of $\K^n$ such that $S_H=\{0\}$.
Assume for instance that this is the case with $H=\K^{n-1} \times \{0\}$.
Then, in the above notation we apply induction to $P(S)$, which has the same dimension as $S$.
It is obvious that the requirements of Theorem \ref{maintheoalt} are fulfilled by $P(S)$,
and hence by induction no generality is lost in assuming that $P(S) \subset \WA_{n-1,s,1}(\K)$ or
$P(S) \subset \WA_{n,0,r+1}(\K)$. In the first case, we can use the same lifting technique as in the above to recover that $S$
is congruent to a subspace of $\WA_{n,s,1}(\K)$. To finish the discussion, assume that the second case holds
and let us see how to lift that information on $P(S)$ to obtain that $S$ is congruent to a subspace of $\WA_{n,0,r+1}(\K)$.
We have a subspace $V$ of $\Mata_{r+1}(\K)$, with $\dim V=\dim S$, and we have linear maps $C_1 : V \rightarrow \K^{r+1}$
and $C_2 : V \rightarrow \K^{n-r-2}$ such that $S$ is the space of all matrices
$$\begin{bmatrix}
A & [0]_{(r+1) \times (n-r-2)} & C_1(A) \\
[0]_{(n-r-2)\times (r+1)} & [0]_{(n-r-2) \times (n-r-2)} & C_2(A) \\
-C_1(A)^T & -C_2(A)^T & 0
\end{bmatrix}\quad \text{with $A \in V$.}$$
Moreover, the codimension of $V$ in $\Mata_{r+1}(\K)$ is small.
Then, by using results on affine spaces of bounded-rank alternating matrices (Corollary 3.1 from \cite{dSPsym}),
we will prove that $C_2=0$ and that $C_1(A)$ belongs to the column space of $A$ for all $A$ in $V$.
In other words, $C_1$ is a \emph{range-compatible} map. Thanks to recent new results on such maps (see \cite{dSPRCsym}),
we shall conclude that $C_1 : A \mapsto AX$ for some fixed $X \in \K^{r+1}$, and hence $S$ is congruent to $\WA_{n,0,r+1}(\K)$.

\subsection{Structure of the article}

The article is laid out as follows.
The next three sections are devoted to elementary parts of the final proofs:
\begin{itemize}
\item In Section \ref{extractionsection}, we recall the basic extraction lemmas from \cite{dSPsym}
that will be used throughout the article.
\item In Section \ref{keylemmasection}, we prove the existence of
a linear hyperplane $H$ of $\K^n$ such that the dimension of $S_H$ is small enough, when $S$ is a linear subspace of symmetric or alternating
matrices with bounded rank.
\item Finally, in Section \ref{liftingsection}, we prove the various lifting results that will allow us to recover the structure of $S$
from that of $P(S)$, under specific assumptions on the dimension and on the structure of $P(S)$. In the end of that section,
we shall give a quick inductive proof of Propositions \ref{altrank2}, \ref{symrank2} and \ref{symrank3}.
\end{itemize}

In the final two sections, we combine those results to prove Theorems \ref{maintheoalt} and \ref{maintheosym}
by induction on $n$ and $r$ (in Sections \ref{altproofsection} and \ref{symproofsection}, respectively).

\section{The extraction lemmas}\label{extractionsection}

\subsection{The Schur complement lemma}

The following lemma is classical (see e.g. Lemma 2.1 from \cite{dSPsym}).

\begin{lemme}\label{schurcomplementlemma}
Let $r \in \lcro 1,n-1\rcro$, $A \in \GL_r(\K)$, $B \in \Mat_{n-r,r}(\K)$, $C \in \Mat_{r,n-r}(\K)$ and $D \in \Mat_{n-r}(\K)$.
Then,
$$\rk \begin{bmatrix}
A & C \\
B & D
\end{bmatrix} = r+\rk(D-B A^{-1} C).$$
\end{lemme}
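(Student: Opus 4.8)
The idea is the classical block-Gaussian elimination argument. First I would perform a block row operation: left-multiply the matrix $\begin{bmatrix} A & C \\ B & D \end{bmatrix}$ by the invertible block matrix $\begin{bmatrix} I_r & 0 \\ -BA^{-1} & I_{n-r} \end{bmatrix}$, which lies in $\GL_n(\K)$ since $A$ is invertible. This produces $\begin{bmatrix} A & C \\ 0 & D - BA^{-1}C \end{bmatrix}$, and since multiplication by an invertible matrix does not change the rank, the two matrices have the same rank. So it remains to compute $\rk \begin{bmatrix} A & C \\ 0 & D - BA^{-1}C \end{bmatrix}$.

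Next I would clear the top-right block by a block column operation: right-multiply by $\begin{bmatrix} I_r & -A^{-1}C \\ 0 & I_{n-r} \end{bmatrix} \in \GL_n(\K)$, which turns the matrix into the block-diagonal matrix $\begin{bmatrix} A & 0 \\ 0 & D - BA^{-1}C \end{bmatrix}$, again without altering the rank. Finally, the rank of a block-diagonal matrix is the sum of the ranks of the diagonal blocks, so this rank equals $\rk A + \rk(D - BA^{-1}C) = r + \rk(D - BA^{-1}C)$, using that $A \in \GL_r(\K)$ has rank $r$. Chaining the equalities gives the claimed formula.

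This proof is entirely routine — there is no real obstacle, since every step is a rank-preserving operation followed by the additivity of rank on block-diagonal matrices. If anything, the only point requiring a word of care is checking that the two elementary block matrices used are genuinely invertible (they are unitriangular, hence invertible over any commutative ring), and that the block products are carried out in the correct order; but these are standard. One could alternatively invoke the identity $\begin{bmatrix} I_r & 0 \\ -BA^{-1} & I_{n-r} \end{bmatrix} \begin{bmatrix} A & C \\ B & D \end{bmatrix} \begin{bmatrix} I_r & -A^{-1}C \\ 0 & I_{n-r} \end{bmatrix} = \begin{bmatrix} A & 0 \\ 0 & D - BA^{-1}C \end{bmatrix}$ in one line and then conclude immediately. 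Since this is a classical fact cited from \cite{dSPsym}, a short proof along these lines suffices.
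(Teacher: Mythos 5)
Your proof is correct and is exactly the classical block-Gaussian-elimination argument that the paper implicitly relies on; the paper itself gives no proof here, simply citing Lemma 2.1 of \cite{dSPsym}, and the factorization identity you display in the last paragraph is the standard one-line verification. Nothing is missing.
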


\subsection{The alternating case}

The following result is Lemma 2.4 in \cite{dSPsym}.

\begin{lemme}\label{alternatingcornerlemma}
Let $n$ be an integer such that $n \geq 3$. Let $r$ be an even integer such that $0\leq r<n$.
Let $A \in \Mata_n(\K)$, which we split up as
$$A=\begin{bmatrix}
0 & L & a \\
-L^T & B & C \\
-a & -C^T & 0
\end{bmatrix}$$
with $L \in \Mat_{1,n-2}(\K)$, $C \in \K^{n-2}$, $a \in \K$ and $B \in \Mata_{n-2}(\K)$.
Set $N:=E_{1,n}-E_{n,1}$ and assume that $\rk(A+tN) \leq r$ for all $t \in \K$. \\
Then, $\rk B \leq r-2$.
\end{lemme}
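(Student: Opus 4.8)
The plan is to strip off the block $B$ by first reducing it to a normal form via a congruence that leaves the corner entry alone, and then peeling it away with the Schur complement lemma.

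First I would note that $A+tN$ is just $A$ with its $(1,n)$ and $(n,1)$ entries changed from $\pm a$ to $\pm(a+t)$; hence, as $t$ runs over $\K$, the hypothesis says precisely that for every $a' \in \K$ the alternating matrix
$$M(a'):=\begin{bmatrix} 0 & L & a' \\ -L^T & B & C \\ -a' & -C^T & 0 \end{bmatrix}$$
has rank at most $r$. Set $2m:=\rk B$, which is even since $B$ is alternating. Using the classical normal form for alternating matrices, I pick $P \in \GL_{n-2}(\K)$ with $PBP^T=\begin{bmatrix} B_0 & 0 \\ 0 & 0 \end{bmatrix}$ for some $B_0 \in \GL_{2m}(\K) \cap \Mata_{2m}(\K)$; conjugating $M(a')$ by $\Diag(1,P,1)$ preserves all ranks and does not alter the entry $a'$, so there is no loss of generality in assuming $B=\begin{bmatrix} B_0 & 0 \\ 0 & 0 \end{bmatrix}$ from the outset. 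I then split $L=\begin{bmatrix} L_1 & L_2 \end{bmatrix}$ and $C=\begin{bmatrix} C_1 \\ C_2 \end{bmatrix}$ accordingly, with $L_1 \in \Mat_{1,2m}(\K)$ and $C_1 \in \K^{2m}$.

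Next I would bring $B_0$ into the upper-left corner by a simultaneous permutation of rows and columns and apply Lemma \ref{schurcomplementlemma} with pivot $B_0$. Two facts make this work: in the permuted matrix every occurrence of $a'$ sits inside the bottom-right diagonal block, so the Schur correction term does not depend on $a'$; and $B_0^{-1}$ is itself alternating — writing $x=B_0y$ for $x \in \K^{2m}$ gives $x^TB_0^{-1}x=y^TB_0^Ty=-y^TB_0y=0$, whence in particular $L_1B_0^{-1}L_1^T=0$ and $C_1^TB_0^{-1}C_1=0$. A short computation then gives
$$\rk M(a')=2m+\rk S(a'),\qquad S(a'):=\begin{bmatrix} 0 & L_2 & a'-\alpha \\ -L_2^T & 0 & C_2 \\ -(a'-\alpha) & -C_2^T & 0 \end{bmatrix}\in \Mata_{n-2m}(\K),$$
where $\alpha:=L_1B_0^{-1}C_1$ is independent of $a'$. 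To conclude, I take $a':=\alpha+1$ (legitimate since $1\neq 0$ in $\K$): then $S(a')$ is a nonzero alternating matrix, so $\rk S(a') \geq 2$, and therefore $r \geq \rk M(a')=2m+\rk S(a') \geq 2m+2$, i.e.\ $\rk B=2m \leq r-2$.

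The computation itself is elementary; the only delicate point is the Schur-complement bookkeeping — tracking the block reordering and the signs, and checking that the correction term $BB_0^{-1}C$ perturbs the bottom-right block only in its corner entry and only by the additive constant $\alpha$, which is exactly where alternateness of $B_0^{-1}$ is used. Everything else is mechanical.
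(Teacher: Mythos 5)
Your proof is correct, and all the delicate points you flag check out: the congruence by $\Diag(1,P,1)$ leaves the $(1,n)$ entry untouched so the normalization of $B$ is harmless; $B_0^{-1}$ is indeed alternating (your argument $x^TB_0^{-1}x = y^TB_0^Ty = -y^TB_0y = 0$ with $x=B_0y$ is right, and $(B_0^{-1})^T=-B_0^{-1}$ follows from $B_0^T=-B_0$); and after block-permutation the Schur complement relative to $B_0$ has correction term $\begin{bmatrix} -L_1B_0^{-1}L_1^T & 0 & L_1B_0^{-1}C_1 \\ 0 & 0 & 0 \\ C_1^TB_0^{-1}L_1^T & 0 & -C_1^TB_0^{-1}C_1\end{bmatrix}$, whose diagonal corner entries vanish by alternateness of $B_0^{-1}$ and whose two off-diagonal corner entries are $\alpha$ and $-\alpha$, leaving $S(a')$ alternating with corner entry $a'-\alpha$. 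Choosing $a'=\alpha+1$ then forces $\rk S(a')\geq 2$ and hence $\rk B = 2m\leq r-2$. Note that the paper does not give its own proof of this lemma but simply cites Lemma 2.4 of \cite{dSPsym}; your argument via Lemma \ref{schurcomplementlemma} is the natural self-contained one, and it is exactly in the spirit of how the Schur complement lemma is used elsewhere in this paper (e.g.\ in the proofs of Propositions \ref{keylemmaalternate}--\ref{keylemmasymmetricodd}), so it is very likely close to the original.
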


\begin{cor}\label{extractioncoralt}
Let $(i,j)\in \lcro 1,n\rcro^2$ be such that $i \neq j$, and let $M \in \Mata_n(\K)$.
Assume that $\rk(M+t(E_{i,j}-E_{j,i})) \leq r$ for all $t \in \K$.
If we denote by $M_{i,j}$ the submatrix of $M$ obtained by deleting the $i$-th and $j$-th rows and columns,
then $\rk M_{i,j} \leq r-2$.
\end{cor}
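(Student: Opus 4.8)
The plan is to deduce this from Lemma~\ref{alternatingcornerlemma} by a congruence with a permutation matrix that brings the pair of indices $(i,j)$ into the pair $(1,n)$.

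First I would choose a permutation $\sigma$ of $\lcro 1,n\rcro$ such that $\sigma(1)=i$ and $\sigma(n)=j$ (possible since $i\neq j$), and let $P\in\GL_n(\K)$ be the associated permutation matrix, so that $P^T X P=\bigl(X_{\sigma(k),\sigma(l)}\bigr)_{1\leq k,l\leq n}$ for every $X\in\Mat_n(\K)$. Set $A:=P^T M P$. Then $A$ is alternating (congruence by a permutation matrix preserves alternation, since $Y^T(P^T M P)Y=(PY)^T M (PY)=0$ for all $Y$), and one checks directly that $P^T(E_{i,j}-E_{j,i})P=E_{1,n}-E_{n,1}=:N$, whence $\rk(A+tN)=\rk\bigl(M+t(E_{i,j}-E_{j,i})\bigr)\leq r$ for all $t\in\K$. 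Moreover, the submatrix of $A$ obtained by deleting its first and last rows and columns equals $M_{i,j}$ up to a simultaneous permutation of rows and columns (induced by the restriction of $\sigma$ to $\lcro 2,n-1\rcro$), and hence has the same rank as $M_{i,j}$.

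Since every alternating matrix has even rank, the hypothesis yields $\rk\bigl(M+t(E_{i,j}-E_{j,i})\bigr)\leq 2\lfloor r/2\rfloor$ for all $t$, so it suffices to treat the case where $r$ is even; likewise I may assume $n\geq 3$, since for $n=2$ the matrix $M_{i,j}$ is empty and the bound is elementary to check. Lemma~\ref{alternatingcornerlemma} then applies to $A$ (with $N$ as above): in the block decomposition used there, the central block is exactly the submatrix of $A$ obtained by deleting the first and last rows and columns, and the lemma gives that its rank is at most $r-2$. Combined with the rank equality from the previous paragraph, this gives $\rk M_{i,j}\leq r-2$. No real obstacle arises here; the only points requiring a little care are the permutation bookkeeping — that $X\mapsto P^T X P$ carries $E_{i,j}-E_{j,i}$ to $E_{1,n}-E_{n,1}$ and the $(i,j)$-deleted submatrix to the $(1,n)$-deleted one — and the routine reduction to the case where Lemma~\ref{alternatingcornerlemma} is applicable verbatim.
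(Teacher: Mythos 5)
Your proof is correct and takes exactly the route the paper intends: conjugate by a permutation matrix sending $(i,j)$ to $(1,n)$, observe that this congruence preserves alternation, carries $E_{i,j}-E_{j,i}$ to $E_{1,n}-E_{n,1}$, and matches the deleted submatrices up to a simultaneous row/column permutation, then invoke Lemma~\ref{alternatingcornerlemma}. The permutation bookkeeping is verified correctly, and the parity reduction (replacing $r$ by $2\lfloor r/2\rfloor$, using evenness of alternating ranks) cleanly handles the lemma's requirement that $r$ be even; the only edge case you leave slightly implicit is $r\geq n$ after the reduction, where the conclusion is trivial since $\rk M_{i,j}\leq n-2\leq r-2$.
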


\subsection{The symmetric case}

The following result is Lemma 2.5 from \cite{dSPsym}.

\begin{lemme}\label{symmetriccornerlemma2}
Let $n$ be an integer such that $n \geq 3$. Let $r$ be a non-negative integer such that $r<n$.
Let $A \in \Mats_n(\K)$, which we split up as
$$A=\begin{bmatrix}
a & L & b \\
L^T & B & C \\
b & C^T & c
\end{bmatrix}$$
with $L \in \Mat_{1,n-2}(\K)$, $C \in \K^{n-2}$, $(a,b,c) \in \K^3$ and $B \in \Mats_{n-2}(\K)$.
Let $d \in \K$, set
$$N:=\begin{bmatrix}
0 & [0]_{1 \times (n-2)} & 1 \\
[0]_{(n-2) \times 1} & [0]_{(n-2) \times (n-2)} & [0]_{(n-2) \times 1} \\
1 & [0]_{1 \times (n-2)} & d
\end{bmatrix}$$
and assume that $\rk(A+tN) \leq r$ for all $t \in \K$, and that $\# \K>2$.
Then, $\rk B \leq r-2$.
\end{lemme}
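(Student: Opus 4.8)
The plan is to use the pencil $A + tN$ to force, by a rank count valid over a field with more than two elements, that the middle block $B$ has rank at most $r-2$. First I would dispose of the trivial cases: if $\rk B \leq r-2$ there is nothing to prove, so assume $\rk B \geq r-1$, and note that since $B$ is an $(n-2)$-by-$(n-2)$ symmetric matrix and $r < n$ we have $\rk B \in \{r-1, r\}$ only if $r-1 \leq n-2$, which holds. Pick an invertible principal submatrix $B'$ of $B$ of size $\rk B =: \rho \in \{r-1,r\}$; after a simultaneous permutation of the middle rows and columns (a congruence that fixes the first and last indices and leaves $N$ unchanged), we may assume $B = \begin{bmatrix} B' & \star \\ \star^T & \star\end{bmatrix}$ with $B' \in \GL_\rho(\K)$ sitting in the top-left corner of the middle block.

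The key step is then to apply the Schur complement lemma (Lemma \ref{schurcomplementlemma}) to the full matrix $A + tN$, pivoting on the invertible block $B'$. Writing $A + tN$ with rows and columns reordered so that the $\rho$ rows and columns of $B'$ come first, the pivot gives
$$\rk(A + tN) = \rho + \rk\bigl(\Sigma(t)\bigr)$$
where $\Sigma(t)$ is the Schur complement, a symmetric matrix of size $(n-\rho)$ whose entries are affine in $t$: in fact only the entries in the positions corresponding to the first and last original indices of $A$ can depend on $t$, and there $t$ appears linearly. Since $\rk(A+tN) \leq r$ for all $t$, we get $\rk \Sigma(t) \leq r - \rho$ for all $t \in \K$.

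From here I would split according to $\rho$. If $\rho = r$, then $\rk \Sigma(t) \leq 0$, so $\Sigma(t) = 0$ for all $t$; looking at the coefficient of $t$ forces the ``$t$-part'' of $\Sigma$ to vanish, and the constant part to vanish — but one of the entries of the constant part of $\Sigma(t)$, coming from the bottom-right $c$-entry minus a correction, and more importantly the off-diagonal entries coming from $C$ and $b$ via $B'^{-1}$, will, together with the $t$-coefficient being the residue of $N$ modulo $B'$, yield a contradiction with $\Sigma(t)\equiv 0$ unless one rules out $\rho = r$; the clean way is to observe that $\Sigma(t)$ contains a $2\times 2$ symmetric submatrix in the first/last coordinates of the form $\begin{bmatrix} a - \cdots & t + \cdots \\ t+\cdots & c-\cdots\end{bmatrix}$, whose determinant is $-t^2 + (\text{lower order in }t)$, a nonzero polynomial of degree $2$, hence nonzero for some $t \in \K$ as soon as $\#\K > 2$ — contradicting $\Sigma(t)=0$. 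If $\rho = r-1$, then $\rk \Sigma(t) \leq 1$ for all $t$; now $\Sigma(t)$ still contains that $2\times 2$ symmetric block with determinant a degree-$2$ polynomial in $t$ with leading coefficient $-1$, so for some $t_0 \in \K$ (using $\#\K > 2$) that $2\times 2$ minor is nonzero, giving $\rk \Sigma(t_0) \geq 2$, again a contradiction. In both cases the assumption $\rk B \geq r-1$ is untenable, so $\rk B \leq r-2$.

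The main obstacle, and the reason the hypothesis $\#\K > 2$ is genuinely needed, is exactly this last point: the offending $2\times 2$ minor of the Schur complement is a quadratic polynomial in $t$ of the form $-t^2 + \alpha t + \beta$, and over $\F_2$ such a polynomial can vanish identically (e.g. $t^2 + t$), so one cannot always find a witnessing value of $t$. Over any larger field a degree-$2$ polynomial has at most two roots and hence a non-root, which is what makes the argument go through. The bookkeeping — tracking exactly which entries of $\Sigma(t)$ carry the parameter $t$ and checking that the coefficient of $t^2$ in the relevant minor is indeed $-1$ (equivalently $\pm 1$, a unit) regardless of the entries of $B'$ — is routine linear algebra with the Schur complement, so I would not belabor it.
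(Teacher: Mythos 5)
The paper cites this lemma from Lemma 2.5 of \cite{dSPsym} without reproducing a proof, so there is no proof in this document to compare against; what I can say is that your argument is correct and is exactly the kind of Schur-complement pencil argument that the cited reference (same author, same machinery) almost certainly uses. Two small points to tighten. First, you should say explicitly that $\rk B \le \rk A \le r$ (from the hypothesis at $t=0$), so that $\rho := \rk B \in \{r-1,r\}$ once you assume $\rk B \ge r-1$; as written you assert this membership without quite deriving the upper bound. Second, the existence of a nonsingular $\rho\times\rho$ \emph{principal} submatrix of a symmetric matrix of rank $\rho$ is a genuine lemma and not merely notational convenience, and it is precisely what lets you keep the pivot move a simultaneous permutation of the middle indices (a congruence fixing $N$); fortunately it does hold over every field — if the columns indexed by $I$ are linearly independent, then symmetry forces rows $I$ to span the row space, and a quick computation shows that any $c$ with $B[I,I]\,c=0$, extended by zeros, lies in $\Ker B$, contradicting independence of the $I$-columns. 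Your computation of the $2\times2$ Schur-complement minor as $-t^2+\alpha t+\beta$ is right (the bottom-right diagonal entry also carries a $+td$, but that only changes $\alpha$, not the leading term), and your observation that the leading coefficient is a unit — so that $\#\K>2$ supplies a $t_0$ where the minor is nonzero, giving $\rk\Sigma(t_0)\ge 2$ and contradicting $\rk\Sigma(t)\le r-\rho\le 1$ — is exactly the point; note this single contradiction covers both $\rho=r$ and $\rho=r-1$, so the separate handling you start for $\rho=r$ is unnecessary.
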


\begin{cor}\label{extractioncorsym}
Let $(i,j)\in \lcro 1,n\rcro^2$ be such that $i \neq j$, and let $M \in \Mats_n(\K)$.
Assume that $\rk(M+t(E_{i,j}+E_{j,i})) \leq r$ for all $t \in \K$, and that $\# \K>2$.
If we denote by $M_{i,j}$ the submatrix of $M$ obtained by deleting the $i$-th and $j$-th rows and columns,
then $\rk M_{i,j} \leq r-2$.
\end{cor}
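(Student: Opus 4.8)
The plan is to reduce the statement to Lemma \ref{symmetriccornerlemma2} by a simultaneous permutation of the rows and columns. First I would dispose of the degenerate case $n \leq 2$: if $n=2$ then $\{i,j\}=\lcro 1,2\rcro$ and $M_{i,j}$ is the empty $0\times 0$ matrix, so there is nothing to prove beyond the fact that the hypothesis forces $r\geq 2$ (the determinant of $M+t(E_{1,2}+E_{2,1})$ is a polynomial of degree $2$ in $t$ with leading coefficient $-1$, hence it cannot vanish at more than two points, and since $\#\K>2$ some value of $t$ yields an invertible matrix, so $0\leq r-2$). From now on we assume $n\geq 3$.

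Pick a permutation $\sigma$ of $\lcro 1,n\rcro$ with $\sigma(i)=1$ and $\sigma(j)=n$, and let $P \in \GL_n(\K)$ be the associated permutation matrix, normalized so that $P E_{a,b} P^T=E_{\sigma(a),\sigma(b)}$ for all $(a,b)$. Set $A:=PMP^T$, which again lies in $\Mats_n(\K)$, and note that $P(E_{i,j}+E_{j,i})P^T=E_{1,n}+E_{n,1}$. Hence, for every $t\in\K$,
$$\rk\bigl(A+t(E_{1,n}+E_{n,1})\bigr)=\rk\Bigl(P\bigl(M+t(E_{i,j}+E_{j,i})\bigr)P^T\Bigr)=\rk\bigl(M+t(E_{i,j}+E_{j,i})\bigr)\leq r,$$
because congruence by an invertible matrix preserves the rank. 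Thus $A$ satisfies the assumptions of Lemma \ref{symmetriccornerlemma2} with the matrix $N$ of that lemma taken for $d=0$ (so that $N=E_{1,n}+E_{n,1}$), together with $\#\K>2$. Writing $A$ in the block form of that lemma, its central $(n-2)\times(n-2)$ block $B$ therefore satisfies $\rk B\leq r-2$.

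It remains to identify $B$ with (a congruent copy of) $M_{i,j}$. By construction $B$ is the submatrix of $A=PMP^T$ obtained by deleting its first and last rows and columns, which is exactly the submatrix of $M$ obtained by deleting rows and columns $i$ and $j$ and then permuting the surviving indices according to the restriction of $\sigma$; in other words $B=Q\,M_{i,j}\,Q^T$ for the permutation matrix $Q$ of that restricted permutation. Since congruence preserves rank, $\rk M_{i,j}=\rk B\leq r-2$, which is the desired conclusion. This argument involves no genuine obstacle: the only points requiring a little care are the bookkeeping of how $M_{i,j}$ sits inside $PMP^T$, and the observation that the free parameter $d$ in Lemma \ref{symmetriccornerlemma2} may simply be set to zero here.
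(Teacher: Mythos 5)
Your proof is correct and is exactly the reduction the paper leaves implicit: the Corollary is stated with no proof because it follows from Lemma \ref{symmetriccornerlemma2} by a simultaneous permutation of rows and columns taking $(i,j)$ to $(1,n)$, with $d=0$, which is precisely what you do. The bookkeeping of the permutation matrix $P$ (so that $PE_{a,b}P^T=E_{\sigma(a),\sigma(b)}$) and the identification of the central block of $PMP^T$ as a permutation-congruent copy of $M_{i,j}$ are both handled properly, and your treatment of the degenerate case $n=2$ is sound. The only tiny omission is that Lemma \ref{symmetriccornerlemma2} assumes $r<n$, while the Corollary does not state this; but if $r\geq n$ the conclusion $\rk M_{i,j}\leq n-2\leq r-2$ is trivially true, so this is harmless and worth a one-line remark at most.
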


\section{On the matrices of small rank in the translation vector space of a bounded-rank affine space}\label{keylemmasection}

In this part, we shall start from an affine subspace $\calS$ of $\Mats_n(\K)$ or of $\Mata_n(\K)$
with upper-rank $r<n$, and we will try to prove that there exists a linear hyperplane $H$ of $\K^n$
such that $\dim S_H$ is small with respect to $r$, where $S$ denotes the translation vector space of $\calS$.

\subsection{The alternating case}

\begin{prop}\label{keylemmaalternate}
Let $\calS$ be an affine subspace of $\Mata_n(\K)$ with upper-rank $2s$ for some non-zero integer $s$.
Assume that $2s<n-1$, and denote by $S$ the translation vector space of $\calS$.
Then, either $\calS$ is congruent to $\WA_{n,s,1}(\K)$, or
there exists a linear hyperplane $H$ of $\K^n$ such that $\dim S_H<s$.
\end{prop}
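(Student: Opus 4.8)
For column vectors $v,w\in\K^n$ put $N_{v,w}:=vw^{T}-wv^{T}\in\Mata_n(\K)$, and identify each linear hyperplane $H$ of $\K^n$ with the line $\K v$ for which $H=v^{\perp}$ (orthogonality for the standard bilinear form). We may assume that $\dim S_H\geq s$ for \emph{every} hyperplane $H$ of $\K^n$, for otherwise the second alternative holds; the task is then to prove that $\calS$ is congruent to $\WA_{n,s,1}(\K)$. The plan is to argue by induction on $n$, each step replacing $(n,s)$ by $(n-2,s-1)$, which preserves the inequality $2s<n-1$, the case $s=0$ being trivial. We first record that the spaces $S_H$ are very rigid: if $H=v^{\perp}$ then $M\in S_H$ if and only if $M=N_{v,x}$ for some $x\in\K^n$, so every nonzero matrix of $S_H$ has rank $2$; and for two distinct hyperplanes $H=v^{\perp}$ and $H'=w^{\perp}$ one has $S_H\cap S_{H'}\subseteq\K\,N_{v,w}$, hence $\dim(S_H\cap S_{H'})\leq 1$. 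By Lemma~2.10 of \cite{dSPsym} some hyperplane $H_0$ satisfies $\dim S_{H_0}\leq s$, so $\dim S_{H_0}=s$ by our assumption. Choosing a nonzero matrix of $S_{H_0}$ and performing a well-chosen congruence, we may assume $H_0=\K^{n-1}\times\{0\}$ and $S_{H_0}=\{N_{e_n,c}:c\in C_0\}$ with $C_0:=\Vect(e_1,\dots,e_s)$; in particular $E_{i,n}-E_{n,i}\in S$ for $1\leq i\leq s$, so by Corollary~\ref{extractioncoralt} deleting the $i$-th and $n$-th rows and columns of any matrix of $\calS$ yields a matrix of rank at most $2s-2$.

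Set $\calK:=\{M_{1,n}:M\in\calS\}\subseteq\Mata_{n-2}(\K)$, on the coordinates $2,\dots,n-1$. Then $\urk\calK\leq 2s-2$, and one checks that $\urk\calK=2s-2$ (a strict drop would, upon running the induction, contradict $\urk\calS=2s$). The crucial claim is that $\calK$ again satisfies the hypothesis of the proposition in dimension $n-2$ with $s$ replaced by $s-1$, i.e.\ $\dim\calK_{H''}\geq s-1$ for every hyperplane $H''$ of $\K^{n-2}$: for $H''=w^{\perp}$, lifting $H''$ to the hyperplane $w^{\perp}$ of $\K^n$ and using $S_{w^{\perp}}\cap S_{H_0}\subseteq\K\,N_{w,e_n}$ together with the bound $\dim(S_H\cap S_{H'})\leq 1$, one pushes $S_{w^{\perp}}$, of dimension $\geq s$, down to $\calK_{H''}$ with a loss of at most one dimension when $w\notin C_0$; the finitely many directions $w\in C_0$ require a separate treatment. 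Granting the claim, the induction hypothesis yields $\calK$ congruent to $\WA_{n-2,s-1,1}(\K)$. Lifting this structure back up along the rows and columns $1$ and $n$ — using that $E_{1,n}-E_{n,1}\in S$ and the shape of $S_{H_0}$, and controlling ranks via the Schur complement (Lemma~\ref{schurcomplementlemma}) — one deduces that $W:=C_0^{\perp}=\Vect(e_{s+1},\dots,e_n)$, of dimension $n-s$, is totally singular for every matrix of $\calS$; equivalently, $\calS$ is congruent to a subspace of $\WA_{n,s,1}(\K)$.

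It remains to promote this inclusion to an equality. Write $\calW:=\{M\in\Mata_n(\K):M|_{W\times W}=0\}$, a congruent copy of $\WA_{n,s,1}(\K)$ of dimension $a_{n,s,1}$, so that, after congruence, $\calS\subseteq\calW$. For every $v\notin W^{\perp}=C_0$ one computes $\dim\calW_{v^{\perp}}=s$, whence $s\leq\dim S_{v^{\perp}}\leq\dim\calW_{v^{\perp}}=s$ forces $S_{v^{\perp}}=\calW_{v^{\perp}}$; a short computation shows that the subspaces $\calW_{v^{\perp}}$ with $v\notin C_0$ span all of $\calW$. Therefore $S=\calW$, so $\dim\calS=\dim\calW=a_{n,s,1}$, and $\calS$, being an affine subspace of the vector space $\calW$ of the same finite dimension, coincides with $\calW$; that is, $\calS$ is congruent to $\WA_{n,s,1}(\K)$.

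The point I expect to be hardest is the crucial claim in the second paragraph: the naive lifting produces only the bound $\dim\calK_{H''}\geq s-2$ along the directions $w\in C_0$, one short of what is needed, and repairing this — perhaps by a more careful choice of $H_0$, or an additional intermediate reduction, or by directly ruling out the situation where $\dim S_H=s$ for every hyperplane $H$ (so that no hyperplane carries a ``large'' $S_H$ to anchor the recursion) — together with making the lifting step and the bottom of the induction precise, is where the real work lies; the remaining arguments are routine manipulations based on the extraction lemmas of Section~\ref{extractionsection}.
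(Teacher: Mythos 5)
Your proposal takes a genuinely different route from the paper --- an induction on $(n,s)$ via deletion of two rows and columns --- but it contains a real gap that you yourself flag. The \emph{crucial claim} that $\dim \calK_{H''} \geq s-1$ for every linear hyperplane $H''$ of $\K^{n-2}$ is not established: as you note, lifting $H''$ to a hyperplane of $\K^n$ gives only $\dim \calK_{H''} \geq s-2$ when the normal direction $w$ lies (after padding by zeros in positions $1$ and $n$) in $C_0$, and no repair is supplied. These problematic directions form an $(s-1)$-dimensional subspace of $\K^{n-2}$, so they are not an ignorable set of exceptions. In addition, the assertion $\urk \calK = 2s-2$ is not justified independently of the induction you are in the middle of setting up, the lifting step from the structure of $\calK$ back to that of $\calS$ is only asserted rather than carried out, and the base of the recursion ($s=1$, for which the statement being invoked is vacuous) is not addressed. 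As written, the argument does not close.

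The paper avoids the induction entirely by a direct argument. After normalizing a rank-$2s$ matrix $A \in \calS$ to the block form $\begin{bmatrix} P & 0 \\ 0 & 0\end{bmatrix}$ with $P \in \GL_{2s}(\K) \cap \Mata_{2s}(\K)$, it works only with hyperplanes $H$ containing $\K^{2s}\times\{0\}$. Via the Schur complement, every $N \in S_H$ is forced into the shape $\begin{bmatrix} 0 & C(N) \\ -C(N)^T & 0\end{bmatrix}$, and the subspaces $T_H := \bigcup_{N \in S_H}\im C(N)\subseteq\K^{2s}$ are shown to be pairwise $b$-orthogonal for the non-degenerate alternating form $b(X,Y) = X^T P^{-1}Y$; since each has dimension at least $s$, they all coincide with a fixed $s$-dimensional totally singular subspace $V$. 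After normalizing $V=\K^s\times\{0\}$, an invariance trick (conjugating $\calS$ by elementary congruences $(L,C)_k\leftarrow(L,C)_k-(L,C)_n$ and recording which elementary alternating matrices land in $S$) yields $\WA_{n,s,1}(\K)\subseteq S$, and the extraction lemma then forces $\calS\subseteq\WA_{n,s,1}(\K)$. This is precisely the difficulty your recursion keeps circling: the paper sidesteps it altogether, whereas your plan needs a substantially new idea to control the $w\in C_0$ directions before the induction can run.
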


Note here that $\calS$ is an affine subspace and not simply a linear subspace.
In this article, we will only need the special case of linear subspaces. Yet, the general case might
be useful in further research on the topic, and it is a more natural framework for that kind of result.

The proof will use a similar strategy as the one of Lemma 2.7 from \cite{dSPboundedrankv2}.

\begin{proof}
Set $r:=2s$.
We assume that $\dim S_H \geq s$ for every linear hyperplane $H$ of $\K^n$,
and in that situation we show that $\calS$ is congruent to $\WA_{n,s,1}(\K)$.
Let us choose a matrix $A \in \calS$ with rank $2s$. Without loss of generality, we can assume that
$$A=\begin{bmatrix}
P & [0]_{r \times (n-r)} \\
[0]_{(n-r) \times r} & [0]_{(n-r) \times (n-r)}
\end{bmatrix} \quad \text{for some $P \in \GL_r(\K) \cap \Mata_r(\K)$.}$$
Now, let $H$ be an arbitrary hyperplane of $\K^n$ that includes $\K^r \times \{0\}$.
The matrices of $S_H$ have the form
$$N=\begin{bmatrix}
[0]_{r \times r} & C(N) \\
-C(N)^T & D(N)
\end{bmatrix} \quad \text{with $C(N) \in \Mat_{r,n-r}(\K)$ and $D(N) \in \Mata_{n-r}(\K)$.}$$
Let $N \in S_H$. Then, we see that $D(N)=0$: indeed Lemma \ref{schurcomplementlemma} applied to $A+N$ yields
$D(N)=-C(N)^T P^{-1} C(N)$, and since $C(N)$ has rank $1$ we deduce that $D(N)=0$ because the rank of the alternating matrix
$D(N)$ must be even.
It follows that
$$\dim S_H=\dim C(S_H).$$
In the next step, we set
$$T_H:=\bigcup_{N \in S_H} \im C(N)$$
and we note that the definition of $S_H$ guarantees that $T_H$ is actually a linear subspace of $\K^r$.
Our aim is to prove that the $T_H$ spaces (when $H$ ranges over the linear hyperplanes of $\K^n$ that include $\K^r \times \{0\}$)
are all equal to a fixed $s$-dimensional totally singular subspace
for the non-degenerate alternating form
$$b : (X,Y)\in (\K^r)^2 \mapsto X^T P^{-1} Y.$$
To do so, consider first two distinct linear hyperplanes $H_1$ and $H_2$ of $\K^n$ that include $\K^r \times \{0\}$.
Then, we prove that $T_{H_1}=T_{H_2}^{\bot_b}$ and $\dim T_{H_1}=s$.
Without loss of generality, we can assume that $H_1$ and $H_2$ are defined, respectively, by the equations
$x_{r+1}=0$ and $x_{r+2}=0$ in the canonical basis.
Then, for $N_1 \in S_{H_1}$ and $N_2 \in S_{H_2}$ we can write
$$C(N_1)=\begin{bmatrix}
C_1(N_1) & [0]_{r \times 1} & [0]_{r \times (n-r-2)}
\end{bmatrix} \quad \text{and} \quad
C(N_2)=\begin{bmatrix}
[0]_{r \times 1} & C_2(N_2) &  [0]_{r \times (n-r-2)}
\end{bmatrix},$$
with $C_1(N_1) \in \K^r$ and $C_2(N_2) \in \K^r$.
Since $D(N_1+N_2)=D(N_1)+D(N_2)=0$, applying Lemma \ref{schurcomplementlemma} to
$A+N_1+N_2$ yields
$$(C(N_1)+C(N_2))^T P^{-1} (C(N_1)+C(N_2))=0,$$
and we deduce, by computing the entry at the $(1,2)$-spot, that $b\bigl(C_1(N_1),C_2(N_2)\bigr)=0$.
Noting that $T_{H_1}=C_1(S_{H_1})$ and $T_{H_2}=C_2(S_{H_2})$,
we deduce that $T_{H_1}$ is $b$-orthogonal to $T_{H_2}$,
whereas $\dim T_{H_1} \geq s$ and $\dim T_{H_2} \geq s$ because of our initial assumption.
By orthogonality theory we find $\dim T_{H_1}+\dim T_{H_2} \leq r=2s$, and hence
$\dim T_{H_1}=\dim T_{H_2}=s$ and $T_{H_1}=T_{H_2}^{\bot_b}$.

Next, we can find a third linear hyperplane $H_3$ that includes $\K^r \times \{0\}$ and is distinct from both
$H_1$ and $H_2$! Applying the above result to both pairs $(H_1,H_3)$ and $(H_2,H_3)$ yields
$T_{H_1}=T_{H_3}^{\bot_b}=T_{H_2}$.

To sum things up, we have exhibited an $s$-dimensional linear subspace $V$ of $\K^r$ such that
$T_H=V$ for every linear hyperplane $H$ of $\K^n$ that includes $\K^r \times \{0\}$.
Performing a harmless congruence, we can now assume that $V=\K^s \times \{0\}$.
In that case, we obtain that $S$ contains \emph{every} alternating matrix of the form
$$\begin{bmatrix}
[0]_{s \times s} & [0]_{s \times s} & [?]_{s \times (n-r)} \\
[0]_{s \times s} & [0]_{s \times s} & [0]_{s \times (n-r)} \\
[?]_{(n-r) \times s} & [0]_{(n-r) \times s} & [0]_{(n-r) \times (n-r)}
\end{bmatrix}.$$
Our ultimate goal is to prove that $\calS$ is included in $\WA_{n,s,1}(\K)$.
In the next step, we demonstrate that $S$ includes $\WA_{n,s,1}(\K)$.
To do so, we use an invariance trick. Fix $k \in \lcro 1,r\rcro$ and
consider the affine space $\calS'$ obtained from $\calS$ by applying the elementary congruence transformation
$(L,C)_k \leftarrow (L,C)_k-(L,C)_n$. Note that this transformation leaves $A$ invariant, whence $A \in \calS'$.
Denoting by $S'$ the translation vector space of $\calS'$, we see that $S'$ still contains
$E_{i,r+1}-E_{r+1,i}$ (because $r \leq n-2$) for all $i \in \lcro 1,s\rcro$.
From the first part of the proof, we know that there exists an $s$-dimensional linear subspace $V'$ of $\K^r$ such that
$T'_H=V'$ for every linear hyperplane $H$ of $\K^n$ that includes $\K^r \times \{0\}$ ($T'_H$ is defined from $S'$ as $T_H$ was defined from $S$).
Taking the hyperplane defined by the equation $x_{r+1}=0$ in the standard basis then yields $V'=\K^s \times \{0\}$.
Let $i \in \lcro 1,s\rcro$. Then, $S'$ contains $E_{i,n}-E_{n,i}$, whence $S$ contains $E_{i,n}+E_{i,k}-E_{n,i}-E_{k,i}$.
Yet, $S$ contains $E_{i,n}-E_{n,i}$, and hence it contains $E_{i,k}-E_{k,i}$.
We conclude that $S$ contains $E_{i,j}-E_{j,i}$ for all $(i,j)\in \lcro 1,n\rcro^2 \setminus \lcro s+1,n\rcro^2$
such that $i \neq j$.

Let $M=(m_{i,j}) \in \calS$. We claim that $m_{i,j}=0$ for all $(i,j)\in \lcro s+1,n\rcro^2$.
Indeed, let us choose an increasing sequence of indices $s+1 \leq j_1<j_2<\cdots <j_s\leq n$,
and denote by $N$ the matrix obtained from $M$ by deleting all rows and columns with index in
$\lcro 1,s\rcro \cup \{ j_1,\dots,j_s\}$.
For all $(t_1,\dots,t_s) \in \K^s$, we know that
the matrix $M+\underset{i=1}{\overset{s}{\sum}} t_i (E_{i,j_i}-E_{j_i,i})$ has rank less than or equal to $2s$.
Applying Lemma \ref{extractioncoralt} inductively yields $\rk N \leq 0$, and hence $N=0$.
As $s<n-s$, varying the sequence $(j_k)_{1 \leq k \leq s}$ shows that
$m_{i,j}=0$ for all $(i,j)\in \lcro s+1,n\rcro^2$.

Therefore, $\calS$ is included in $\WA_{n,s,1}(\K)$. On the other hand, we have shown that $\WA_{n,s,1}(\K) \subset S$, and hence
$\calS=\WA_{n,s,1}(\K)$.
\end{proof}

\subsection{The symmetric case, with $r<n-1$}

Here, we state and prove the equivalent of Proposition \ref{keylemmaalternate} for spaces of symmetric matrices.
We split the discussion into two cases, whether the upper-rank is even or odd.

\begin{Def}
Let $\calS$ be a linear subspace of $\Mats_n(\K)$.
A linear hyperplane $H$ of $\K^n$ is called \textbf{$\calS$-adapted} when it satisfies the following two conditions:
\begin{enumerate}[(a)]
\item The space $S_H$ contains no rank $1$ matrix;
\item If $\K$ has characteristic $2$ then there exists $X \in H$ and $M \in \calS$ such that $X^T MX \neq 0$.
\end{enumerate}
\end{Def}

\begin{prop}\label{keylemmasymmetriceven}
Assume that $\# \K>2$.
Let $\calS$ be an affine subspace of $\Mats_n(\K)$ that is not included in
$\Mata_n(\K)$. Assume that $\urk(\calS \setminus \Mata_n(\K))=2s$ for some non-zero integer $s$ such that $2s<n-1$.
Then, $\calS$ is congruent to a subspace of $\WS_{n,s,0}(\K)$ or there exists
an $\calS$-adapted linear hyperplane $H$ of $\K^n$ such that $\dim S_H<s$.
\end{prop}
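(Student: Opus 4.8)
The plan is to mimic the proof of Proposition~\ref{keylemmaalternate}. Assume that $\dim S_H \geq s$ for every $\calS$-adapted linear hyperplane $H$ of $\K^n$; I then have to show that $\calS$ is congruent to a subspace of $\WS_{n,s,0}(\K)$. First I would pick $A \in \calS \setminus \Mata_n(\K)$ with $\rk A = 2s$ (one exists since $\urk(\calS \setminus \Mata_n(\K))=2s$), and, after a congruence, assume $A = P \oplus 0_{n-2s}$ with $P \in \GL_{2s}(\K) \cap \Mats_{2s}(\K)$; since $A$ is non-alternating so is $P$. The key preliminary observation is that \emph{every} linear hyperplane $H \supseteq \K^{2s}\times\{0\}$ is $\calS$-adapted: Lemma~\ref{schurcomplementlemma} applied to $A+N$ (which is non-alternating, its diagonal still carrying $\Diag P$) shows that each $N \in S_H$ has the shape $\begin{bmatrix} 0 & C(N) \\ C(N)^T & D(N) \end{bmatrix}$ with $D(N) = C(N)^T P^{-1} C(N)$ and $\rk C(N) \leq 1$, so a rank~$1$ element of $S_H$ would be supported in the bottom-right block and hence be $0$ (condition~(a)), while in characteristic~$2$ condition~(b) holds with $X = e_k$ and $M = A$ for any $k\leq 2s$ with $P_{k,k}\neq 0$. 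Thus all such $H$ satisfy $\dim S_H \geq s$.

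Next, fixing such an $H = (\K^{2s}\times\{0\})\oplus\ker(\phi^T)$ with $\phi\in\K^{n-2s}\setminus\{0\}$, one gets $C(N) = v(N)\,\phi^T$ for a linear map $v\colon S_H\to\K^{2s}$, that $N\mapsto C(N)$ is injective on $S_H$, and that $T_H := \bigcup_{N\in S_H}\im C(N)$ equals the \emph{subspace} $v(S_H)$, with $\dim S_H = \dim T_H$. The heart of the argument is an orthogonality dichotomy for the nondegenerate \emph{symmetric} bilinear form $b\colon(X,Y)\mapsto X^T P^{-1} Y$ on $\K^{2s}$. Given two distinct such hyperplanes $H_1,H_2$, after adjusting coordinates in the last $n-2s$ slots (which touches neither $A$ nor $b$) so that $H_i=\{x_{2s+i}=0\}$, applying Lemma~\ref{schurcomplementlemma} to $A+N_1+N_2\in\calS\setminus\Mata_n(\K)$ shows that its Schur complement equals $-b(v(N_1),v(N_2))\,(f_1f_2^T+f_2f_1^T)$, with $f_1,f_2$ the first two standard basis vectors of $\K^{n-2s}$; since that matrix is nonzero and $\rk(A+N_1+N_2)\leq 2s$, I obtain $b(v(N_1),v(N_2))=0$, i.e.\ $T_{H_1}\perp_b T_{H_2}$. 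As $\dim T_{H_i}\geq s$ and $b$ is nondegenerate on a $2s$-dimensional space, this forces $\dim T_{H_1}=\dim T_{H_2}=s$ and $T_{H_1}=T_{H_2}^{\perp_b}$; bringing in a third such hyperplane (available because $2s<n-1$) shows that all the $T_H$ equal one fixed $s$-dimensional subspace $V$, which is then totally isotropic — hence Lagrangian — for $b$, so $b$ and $P$ are hyperbolic.

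A further congruence on the first $2s$ coordinates lets me assume $V=\K^s\times\{0\}$; then $P^{-1}$ has zero $s\times s$ top-left block, so $P$ has zero $s\times s$ bottom-right block and $A\in\WS_{n,s,0}(\K)$, and it remains to prove $S\subseteq\WS_{n,s,0}(\K)$. Since $b$ vanishes on $V$, every $N\in S_H$ (for $H\supseteq\K^{2s}\times\{0\}$) has $D(N)=0$, and reading this off for the coordinate hyperplanes $\{x_{2s+j}=0\}$ gives $E_{i,k}+E_{k,i}\in S$ for all $i\leq s$ and $k\geq 2s+1$. An invariance trick patterned on the one in Proposition~\ref{keylemmaalternate} — apply to $\calS$ the elementary congruence $c_k\leftarrow c_k-c_n$, $r_k\leftarrow r_k-r_n$ for $k\in\lcro 1,2s\rcro$, which fixes $A$, rerun the previous two paragraphs to see that the new subspace $V'$ again equals $\K^s\times\{0\}$, then pull back the relation $E_{i,n}+E_{n,i}\in S'$ — upgrades this to $E_{i,k}+E_{k,i}\in S$ for all $i\leq s$ and $k\geq s+1$. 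Finally, given $M\in\calS$ and two indices $p,q\in\lcro s+1,n\rcro$, I would choose $j_1<\dots<j_s$ in $\lcro s+1,n\rcro\setminus\{p,q\}$ (possible since $n\geq 2s+2$) and apply Corollary~\ref{extractioncorsym} successively to $M+\sum_{i=1}^s t_i(E_{i,j_i}+E_{j_i,i})$, which drives the rank of the submatrix of $M$ obtained by deleting rows and columns $1,\dots,s,j_1,\dots,j_s$ down to $0$, so $m_{p,q}=0$; hence $M\in\WS_{n,s,0}(\K)$.

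The delicate point — and the place where $\#\K>2$ is used beyond Corollary~\ref{extractioncorsym} — is that in characteristic~$2$ a matrix $M\in\calS$ may be alternating, so that neither $\rk M\leq 2s$ nor the rank bounds needed for the corner-lemma step are directly available. I would handle this by running the last step only for matrices of the form $A+N$ with $A+N$ non-alternating: the set of $N\in S$ with $A+N$ non-alternating is the complement of an affine subspace $S_0\subsetneq S$ (possibly empty; proper because $0\in S$ and $A\notin\Mata_n(\K)$), the argument above gives that every such $N$ has zero bottom-right $(n-s)\times(n-s)$ block, and since $\#\K>2$ a linear map on $S$ vanishing off a proper affine subspace vanishes everywhere, so the bottom-right block map is identically zero on $S$, i.e.\ $S\subseteq\WS_{n,s,0}(\K)$. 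I expect the orthogonality dichotomy of the second paragraph (forcing all the $T_H$ to coincide in a Lagrangian) to be the conceptual core, and the characteristic~$2$ bookkeeping of this last paragraph to be the most error-prone part.
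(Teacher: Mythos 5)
Your proposal is correct and follows essentially the same strategy as the paper's proof: fix a non-alternating $A$ of rank $2s$, show every hyperplane $H\supseteq\K^{2s}\times\{0\}$ is $\calS$-adapted, derive that the subspaces $T_H$ coincide with a common $s$-dimensional Lagrangian $V$ for $b=P^{-1}$, apply the invariance trick from Proposition~\ref{keylemmaalternate} to get the matrices $E_{i,k}+E_{k,i}$ into $S$, run the corner lemma on non-alternating elements, and finish with the $\#\K>2$ density argument. The one genuine variation is minor but neat: instead of first forcing $D(N)=0$ and $b(v(N),v(N))=0$ by scaling ($tN$, $t\in\K$, which is where the paper invokes $\#\K>2$ at this stage), you compute the Schur complement of $A+N_1+N_2$ directly and read off $b(v(N_1),v(N_2))=0$ from the vanishing of $-b(v(N_1),v(N_2))(f_1f_2^T+f_2f_1^T)$, postponing $D(N)=0$ until $V$ is known to be totally isotropic; this shaves off the use of $\#\K>2$ at that particular step. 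Two tiny slips, neither fatal: the sentence invoking $\rk C(N)\leq 1$ to rule out rank-$1$ elements of $S_H$ is not quite the right reason (what matters is that a symmetric rank-$1$ matrix with zero top-left $2s\times 2s$ block must have $C(N)=0$, hence $D(N)=C(N)^TP^{-1}C(N)=0$, hence $N=0$), and the very last line should read $\calS\subseteq\WS_{n,s,0}(\K)$ (which does follow, since $A\in\WS_{n,s,0}(\K)$ and $S\subseteq\WS_{n,s,0}(\K)$).
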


\begin{proof}
The chain of arguments is essentially similar to the one of Proposition \ref{keylemmaalternate}.
We assume that there is no $\calS$-adapted linear hyperplane $H$ of $\K^n$ such that $\dim S_H<s$, and we try to prove that
$\calS$ is congruent to a subspace of $\WS_{n,s,0}(\K)$. Set $r:=2s$.

Without loss of generality, we can assume that $\calS$ contains a rank $r$ symmetric matrix
$$A=\begin{bmatrix}
P & [0]_{r \times (n-r)} \\
[0]_{(n-r) \times r} & [0]_{(n-r) \times (n-r)}
\end{bmatrix} \quad \text{for some \emph{non-alternating} matrix $P \in \GL_r(\K) \cap \Mats_r(\K)$.}$$

Let $H$ be an arbitrary hyperplane of $\K^n$ that includes $\K^r \times \{0\}$.
The matrices of $S_H$ have the form
$$N=\begin{bmatrix}
[0]_{r \times r} & C(N) \\
C(N)^T & D(N)
\end{bmatrix} \quad \text{with $C(N) \in \Mat_{r,n-r}(\K)$ and $D(N) \in \Mats_{n-r}(\K)$.}$$
We set $$T_H:=\bigcup_{N \in S_H} \im C(N).$$
As in the proof of Proposition \ref{keylemmaalternate}, we shall prove that
the $D$ map vanishes at every matrix of $S_H$, for every linear hyperplane $H$ of $\K^n$ that includes $\K^r \times \{0\}$,
and that the $T_H$ subspaces, for such $H$, are all equal to some fixed $s$-dimensional totally singular subspace of $\K^r$ for the non-degenerate
symmetric bilinear form
$$b: (X,Y) \mapsto X^T P^{-1}Y.$$

Fix $H$ and $N \in S_H$. Note that $A+N$ is non-alternating, whence, by Lemma \ref{schurcomplementlemma},
$$D(N)=C(N)^T P^{-1} C(N).$$
Applying this to $tN$ for all $t \in \K$ leads, since $\# \K>2$,
to
$$D(N)=0 \quad \text{and} \quad C(N)^T P^{-1} C(N)=0.$$
In particular, $S_H$ contains no rank $1$ matrix and
$\dim T_H=\dim S_H \geq s$. Since $P$ is non-alternating, there exists $X \in H$ such that $X^T A X \neq 0$,
whence $H$ is $\calS$-adapted.
Next, we take distinct linear hyperplanes of $H_1$ and $H_2$ of $\K^n$ that include $\K^r \times \{0\}$.
Then, we prove that $T_{H_1}=T_{H_2}^{\bot_b}$ and $\dim T_{H_1}=s$.

Without loss of generality, we can assume that $H_1$ and $H_2$ are defined, respectively, by the equations
$x_{r+1}=0$ and $x_{r+2}=0$ in the canonical basis.
Then, for $N_1 \in S_{H_1}$ and $N_2 \in S_{H_2}$ we can write
$$C(N_1)=\begin{bmatrix}
C_1(N_1) & [0]_{r \times 1} & [0]_{r \times (n-r-2)}
\end{bmatrix} \quad \text{and} \quad
C(N_2)=\begin{bmatrix}
[0]_{r \times 1} & C_2(N_2) &  [0]_{r \times (n-r-2)}
\end{bmatrix},$$
with $C_1(N_1) \in \K^r$ and $C_2(N_2) \in \K^r$.
In the identity $C(N_1+N_2)^T P^{-1} C(N_1+N_2)=0$, computing the entry at the
$(1,2)$-spot leads to
$$b\bigl(C_1(N_1),C_2(N_2)\bigr)=0.$$
Since $b$ is non-degenerate, we obtain, as in the proof of Proposition \ref{keylemmaalternate}, that
$T_{H_1}=(T_{H_2})^{\bot_b}$ and $\dim T_{H_1}=s$.
Then, using a third hyperplane of the same type, we conclude that $T_{H_1}=T_{H_2}$.

Generalizing this, we conclude that there is an $s$-dimensional linear subspace $V$ of $\K^r$ such that, for every linear
hyperplane $H$ of $\K^n$ that includes $\K^r \times \{0\}$,
we have $T_H=V$ and $D$ vanishes everywhere on $S_H$. Without loss of generality, we can assume that $V=\K^s \times \{0\}$.
From there, by following the chain of arguments of the end of the proof of Proposition \ref{keylemmaalternate},
we successively show:
\begin{itemize}
\item That $S$ contains $E_{i,j}+E_{j,i}$ for all $(i,j)$ in $\lcro 1,n\rcro^2 \setminus \lcro s+1,n\rcro^2$ such that $i \neq j$;
\item That every matrix of $\calS \setminus \Mata_n(\K)$ belongs to $\WS_{n,s,0}(\K)$ (this time, by applying Corollary \ref{extractioncorsym}).
\end{itemize}
Yet, $\Mata_n(\K) \cap \calS$ is included in an affine hyperplane of $\calS$.
Since $\# \K>2$, we deduce that $\calS \setminus \Mata_n(\K)$ generates the affine space $\calS$, and we conclude that
$\calS \subset \WS_{n,s,0}(\K)$.
\end{proof}

Next, we consider the case when the upper-rank is odd.

\begin{prop}\label{keylemmasymmetricodd}
Assume that $\# \K>2$.
Let $\calS$ be an affine subspace of $\Mats_n(\K)$ that is not included in
$\Mata_n(\K)$. Assume that $\urk(\calS \setminus \Mata_n(\K))=2s+1$ for some non-zero integer $s$ such that $2s+1<n-1$.
Then, $\calS$ is congruent to a subspace of $\WS_{n,s,1}(\K)$ or there exists
an $\calS$-adapted linear hyperplane $H$ of $\K^n$ such that $\dim S_H<s$.
\end{prop}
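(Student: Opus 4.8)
The plan is to run the proof of Proposition~\ref{keylemmasymmetriceven} almost verbatim, the genuine novelties being a dimension count that now takes place in a space of \emph{odd} dimension $2s+1$ and a final lifting step that must produce $\WS_{n,s,1}(\K)$ in place of $\WS_{n,s,0}(\K)$. Set $r:=2s+1$; assume there is no $\calS$-adapted linear hyperplane $H$ of $\K^n$ with $\dim S_H<s$, and aim to show that $\calS$ is congruent to a subspace of $\WS_{n,s,1}(\K)$. After a congruence, $\calS$ contains a matrix $A$ that equals an invertible symmetric matrix $P\in\GL_r(\K)\cap\Mats_r(\K)$ in its top-left $r\times r$ corner and vanishes elsewhere; $P$ is automatically non-alternating, as an alternating matrix has even rank and $r$ is odd. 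For a linear hyperplane $H$ of $\K^n$ containing $\K^r\times\{0\}$, write each matrix of $S_H$ with an $r\times(n-r)$ off-diagonal block $C(N)$ and an $(n-r)\times(n-r)$ corner block $D(N)$; applying Lemma~\ref{schurcomplementlemma} to $A+tN$, which is non-alternating because its top-left block equals $P$, and using $\#\K>2$ exactly as in the even case, one gets $D(N)=0$ and $C(N)^{T}P^{-1}C(N)=0$ for all $N\in S_H$. Hence $S_H$ has no rank~$1$ matrix, $H$ is $\calS$-adapted (as $P$ is non-alternating), $C$ is injective on $S_H$, and $T_H:=\bigcup_{N\in S_H}\im C(N)$ is a linear subspace of $\K^r$, totally singular for the non-degenerate symmetric bilinear form $b\colon(X,Y)\mapsto X^{T}P^{-1}Y$, with $\dim T_H=\dim S_H\ge s$.

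Here is where the odd case departs from the even one. The form $b$ is carried by the $(2s+1)$-dimensional space $\K^r$, so its Witt index is at most $s$; thus $\dim T_H\le s$, which combined with the above forces $\dim T_H=s$, so each $T_H$ is a \emph{maximal} totally singular subspace. (In characteristic~$2$ one needs the same care as in Proposition~\ref{keylemmasymmetriceven}, via the characteristic-$2$ clause in the definition of an $\calS$-adapted hyperplane, to obtain this bound.) Applying Lemma~\ref{schurcomplementlemma} to $A+N_1+N_2$ with $N_i\in S_{H_i}$ for two hyperplanes $H_1\ne H_2$ of the above type, and reading off the appropriate off-diagonal entry of the resulting identity, yields $b(X,Y)=0$ for $X\in T_{H_1}$, $Y\in T_{H_2}$; so $T_{H_1}$ and $T_{H_2}$ are mutually $b$-orthogonal, whence $T_{H_1}+T_{H_2}$ is again totally singular, hence of dimension at most $s$, and therefore $T_{H_1}=T_{H_2}$. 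As in Proposition~\ref{keylemmasymmetriceven}, it follows that all the spaces $T_H$ coincide with a single $s$-dimensional subspace $V$ of $\K^r$ and that $D$ vanishes on each $S_H$; a further congruence lets me assume $V=\K^{s}\times\{0\}^{n-s}$.

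Now comes the lifting, following the end of the proofs of Propositions~\ref{keylemmaalternate} and~\ref{keylemmasymmetriceven}. Inspecting $S_H$ for $H=\{x_j=0\}$ with $j>r$ shows that $S$ contains $E_{i,j}+E_{j,i}$ for $i\in\lcro 1,s\rcro$, $j\in\lcro r+1,n\rcro$; the invariance trick of Proposition~\ref{keylemmaalternate}, run here through the transvections $I_n+\lambda E_{n,k}$, $k\in\lcro 1,r\rcro$, which fix $A$ and to which the first part of the argument applies after transport, upgrades this to: $S$ contains $E_{i,j}+E_{j,i}$ for every $(i,j)$ with $\min(i,j)\le s$, i.e.\ $S$ contains every symmetric matrix that vanishes on the rows and columns of index in $\lcro s+1,n\rcro$. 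Fix $M\in\calS\setminus\Mata_n(\K)$, so $\rk M\le r$; for $s+1\le j_1<\dots<j_s\le n$ and scalars $t_1,\dots,t_s$, the matrix $M+\sum_{i=1}^{s}t_i(E_{i,j_i}+E_{j_i,i})$ lies in $\calS$ and, aside from the characteristic-$2$ subtleties handled as in Proposition~\ref{keylemmasymmetriceven}, is non-alternating of rank at most $r$; applying Corollary~\ref{extractioncorsym} $s$ times, successively deleting the pairs of rows and columns $(i,j_i)$, shows that the matrix deduced from $M$ by deleting all rows and columns of index in $\lcro 1,s\rcro\cup\{j_1,\dots,j_s\}$ has rank at most $r-2s=1$. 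Letting $\{j_1,\dots,j_s\}$ vary, this means that the $(n-s)\times(n-s)$ submatrix $M'$ of $M$ lying on the rows and columns of index in $\lcro s+1,n\rcro$ has all its principal submatrices of size $n-2s$ of rank at most $1$; since $n-2s\ge 3$ under our assumption, every $3\times3$ principal submatrix of $M'$ has rank at most $1$, and a symmetric matrix of size at least $3$ all of whose $3\times3$ principal submatrices have rank at most $1$ has itself rank at most $1$ (if it is nonzero it has a nonzero diagonal entry, and then all its entries are forced and it has rank one). Thus $\rk M'\le 1$ for every $M\in\calS\setminus\Mata_n(\K)$. As $M$ runs over $\calS$, the submatrices $M'$ sweep out an affine subspace $\calA$ of $\Mats_{n-s}(\K)$ whose members outside $\Mata_{n-s}(\K)$ all have rank at most $1$ and which contains the rank~$1$ non-alternating matrix $A'$ (the restriction of $A$); since $\#\K>2$, such an affine space is contained in a single line $\K\,uu^{T}$, and a last congruence $\Diag(I_s,Q)$, which preserves the structure of $S$ obtained so far, carries that line onto $\K\,E_{s+1,s+1}$. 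Then $M'\in\K\,E_{s+1,s+1}$ for every $M\in\calS\setminus\Mata_n(\K)$, and since $\calS\cap\Mata_n(\K)$ is contained in an affine hyperplane of $\calS$ and $\#\K>2$, its non-alternating members generate $\calS$ affinely, so $M'\in\K\,E_{s+1,s+1}$ for every $M\in\calS$. This is exactly the statement that $\calS\subseteq\WS_{n,s,1}(\K)$.

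The step I expect to be the main obstacle is this last one. In the even case the analogous chain of deletions brings the relevant submatrix down to rank~$0$, which at once confines the matrices of $\calS$ to $\WS_{n,s,0}(\K)$; in the odd case one is only left with rank at most~$1$, and the heart of the new work is to show that this rank-one part sits on the single ``middle'' coordinate~$s+1$ --- via the remark on symmetric matrices whose $3\times3$ principal submatrices have rank at most~$1$, the classification of affine spaces of rank-$\le 1$ symmetric matrices, and the final normalizing congruence. As in Proposition~\ref{keylemmasymmetriceven}, this is also where, in characteristic~$2$, the non-alternating condition along the one-parameter families feeding Corollary~\ref{extractioncorsym} must be watched with particular care.
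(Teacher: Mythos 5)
Your argument for the dimension bound $\dim T_H\le s$ has a genuine gap in characteristic $2$, which is precisely the delicate case the proposition is designed to cover. What the Schur-complement computation actually yields, for a hyperplane $H$ defined by a single coordinate $x_j=0$ with $j>r$, is that $q(C_j(N))=0$ for every $N\in S_H$ (where $q(X)=X^TP^{-1}X$), i.e.\ $T_H$ lies in the isotropy cone of $q$. In characteristic not $2$ this does imply $b|_{T_H}=0$, because $q(X+Y)=q(X)+2b(X,Y)+q(Y)$. But in characteristic $2$ one only learns that $b|_{T_H}$ is alternating (indeed, applying the Schur computation to $N+N'$ gives $b(C_j(N),C_j(N'))+b(C_j(N'),C_j(N))=2b(C_j(N),C_j(N'))=0$, which is vacuous). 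An alternating restriction on a subspace of dimension up to $2s$ need not be zero, so you have not shown that $T_H$ is totally singular, and consequently the Witt-index bound does not directly apply; the parenthetical appeal to ``the characteristic-$2$ clause in the definition of an $\calS$-adapted hyperplane'' is not a fix, since that clause concerns the quadratic forms attached to matrices of $\calS$, not the auxiliary form $b$ extracted from the particular matrix $A$. This is exactly why the paper's proof does \emph{not} assert that any single $T_H$ is totally singular: instead it supposes $\dim(T_{H_1}+T_{H_2})\ge s+1$, brings in a third hyperplane $H_3$ to pin down that dimension at $s+1$, and then shows that $T_{H_1}+T_{H_2}$ (not $T_{H_1}$ alone) is totally singular by exhibiting a basis $(X_1,\dots,X_{s+1})$ whose vectors each lie in $T_{H_1}$ or $T_{H_2}$ and are therefore $q$-isotropic and \emph{pairwise} $b$-orthogonal --- the pairwise orthogonality always comes from the cross-orthogonality $T_{H_1}\perp_b T_{H_2}$, never from an internal orthogonality of $T_{H_1}$, and that is what makes the argument characteristic-free. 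You would need to restore a version of this three-hyperplane basis argument to repair your proof; in characteristic $\ne 2$ your shortcut is indeed cleaner and correct.

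The lifting at the end follows the paper's route (invariance trick, repeated use of Corollary~\ref{extractioncorsym}, then the rank-one block $J(M)$), with your $3\times 3$-principal-submatrix observation a reasonable way to spell out the reduction to $\rk J(M)\le 1$. Two smaller points there deserve care. First, you assert that the restriction $A'$ of $A$ to the last $n-s$ coordinates is a rank-one \emph{non-alternating} matrix; the extraction only gives $\rk A'\le 1$, and after the normalising congruence that puts $V=\K^s\times\{0\}$ there is no reason for the non-alternating part of $P$ to sit in its lower-right block, so you have not produced the anchor you need. Second, the statement ``an affine subspace of $\Mats_{n-s}(\K)$ whose non-alternating members have rank at most one, over a field with more than two elements, lies on a single line $\K\,uu^T$'' is not a citation of a known result but essentially the content you must prove; the paper does this directly by taking two non-alternating $M_1,M_2$ with $J(M_1),J(M_2)$ non-collinear, producing $t\ne 0,1$ with $J(tM_1+(1-t)M_2)$ of rank two, and treating $\#\K=3$ separately. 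Folding that two-line argument in, rather than invoking an unproved classification, would close these minor gaps.
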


\begin{proof}
We assume that there is no $\calS$-adapted linear hyperplane $H$ of $\K^n$ such that $\dim S_H<s$, and we try to prove that
$\calS$ is congruent to a subspace of $\WS_{n,s,1}(\K)$. Set $r:=2s+1$.

Without loss of generality, we can assume that $\calS$ contains a rank $r$ symmetric matrix
$$A=\begin{bmatrix}
P & [0]_{r \times (n-r)} \\
[0]_{(n-r) \times r} & [0]_{(n-r) \times (n-r)}
\end{bmatrix} \quad \text{for some \emph{non-alternating} matrix $P \in \GL_r(\K) \cap \Mats_r(\K)$}.$$

Let $H$ be an arbitrary linear hyperplane of $\K^n$ that includes $\K^r \times \{0\}$.
The matrices of $S_H$ have the form
$$N=\begin{bmatrix}
[0]_{r \times r} & C(N) \\
C(N)^T & D(N)
\end{bmatrix} \quad \text{with $C(N) \in \Mat_{r,n-r}(\K)$ and $D(N) \in \Mats_{n-r}(\K)$.}$$
We set $$T_H:=\bigcup_{N \in S_H} \im C(N).$$
As in the proof of Proposition \ref{keylemmasymmetriceven}, we obtain that $D$ vanishes everywhere on $S_H$ and that
$$\forall N \in S_H, \; C(N)^T P^{-1} C(N)=0.$$
As in the proof of Proposition \ref{keylemmasymmetriceven}, we deduce that $H$ is $\calS$-adapted and $\dim T_H=\dim S_H \geq s$.
We shall now prove that the $T_H$ subspaces are all equal to some $s$-dimensional totally singular subspace of $\K^s$ for the non-degenerate
symmetric bilinear form
$$b: (X,Y) \mapsto X^T P^{-1}Y.$$
Firstly, we take distinct linear hyperplanes of $H_1$ and $H_2$ of $\K^n$ that include $\K^r \times \{0\}$.
We shall demonstrate that $T_{H_1}=T_{H_2}$.
Our first step consists in proving that $T_{H_1}$ and $T_{H_2}$ are $b$-orthogonal and that they lie
inside the isotropy cone of the quadratic form
$$q : X \mapsto b(X,X).$$

Without loss of generality, we can assume that $H_1$ and $H_2$ are defined, respectively, by the equations
$x_{r+1}=0$ and $x_{r+2}=0$ in the canonical basis.
Then, for $N_1 \in S_{H_1}$ and $N_2 \in S_{H_2}$ we can write
$$C(N_1)=\begin{bmatrix}
C_1(N_1) & [0]_{r \times 1} & [0]_{r \times (n-r-2)}
\end{bmatrix} \quad \text{and} \quad
C(N_2)=\begin{bmatrix}
[0]_{r \times 1} & C_2(N_2) &  [0]_{r \times (n-r-2)}
\end{bmatrix},$$
with $C_1(N_1) \in \K^r$ and $C_2(N_2) \in \K^r$. \\
From the identity $D(N_1+N_2)=C(N_1+N_2)^T P^{-1} C(N_1+N_2)$, we deduce that
$$b\bigl(C_1(N_1),C_2(N_2)\bigr)=0, \; b\bigl(C_1(N_1),C_1(N_1)\bigr)=0 \quad \text{and} \quad
b\bigl(C_2(N_2),C_2(N_2)\bigr)=0.$$
Hence, $T_{H_1}$ and $T_{H_2}$ are $b$-orthogonal and lay in the isotropy cone of $q$.

Assume now that $\dim (T_{H_1}+T_{H_2}) \geq s+1$. We can choose
a third hyperplane $H_3$ that is distinct from $H_1$ and $H_2$ and that includes $\K^r \times \{0\}$
(e.g., we define $H_3$ by the equation $x_{r+1}=x_{r+2}$ in the standard basis).
Then, we find that $T_{H_3} \subset T_{H_1}^{\bot_b} \cap T_{H_2}^{\bot_b} =(T_{H_1}+T_{H_2})^{\bot_b}$,
and as $\dim T_{H_3}=s$ it follows that $\dim (T_{H_1}+T_{H_2})=s+1$.
From there, we shall prove that $T_{H_1}+T_{H_2}$ is a totally singular subspace for $b$.

Since $\dim T_{H_1} \geq s$ and $\dim T_{H_2} \geq s$, this leaves us with three cases
to study:

\begin{itemize}
\item $T_{H_1}$ and $T_{H_2}$ are distinct and have dimension $s$. Then, $T_{H_1} \cap T_{H_2}$ has dimension $s-1$, and
we can find a basis $(X_1,\dots,X_{s+1})$ of $T_{H_1}+T_{H_2}$ in which $X_1,\dots,X_{s-1}$ all belong to $T_{H_1} \cap T_{H_2}$,
$X_s$ belongs to $T_{H_1}$ and $X_{s+1}$ belongs to $T_{H_2}$. Then, it is clear that the $X_i$ vectors are $b$-isotropic and pairwise
$b$-orthogonal, which yields that $T_{H_1}+T_{H_2}$ is totally singular for $b$.

\item $T_{H_1}$ and $T_{H_2}$ are equal and have dimension $s+1$: it is then straightforward that $T_{H_1}+T_{H_2}=T_{H_1}=T_{H_2}$
is totally singular for $b$;

\item One of $T_{H_1}$ and $T_{H_2}$, say $T_{H_1}$, has dimension $s+1$, and the other one has dimension $s$.
Then, $T_{H_1}+T_{H_2}=T_{H_1}$ and hence
we find a basis $(X_1,\dots,X_{s+1})$ of $T_{H_1}$
in which the first $s$ vectors belong to $T_{H_2}$.
Again, it is clear that the $X_i$ vectors are $b$-isotropic and pairwise $b$-orthogonal, whence
$T_{H_1}+T_{H_2}$ is totally singular for $b$.
\end{itemize}

In any case, we have an $(s+1)$-dimensional linear subspace of $\K^{2s+1}$ that is totally $b$-singular, which
contradicts the fact that $b$ is non-degenerate.
Hence $\dim(T_{H_1}+T_{H_2}) \leq s$, and since $\dim T_{H_1} \geq s$ and $\dim T_{H_2}\geq s$, it follows that
$T_{H_1}=T_{H_1}+T_{H_2}=T_{H_2}$, as claimed.

From there, we can follow exactly the same line of reasoning as in the proof of Proposition \ref{keylemmaalternate}:
we have found an $s$-dimensional linear subspace $V$ of $\K^{2s+1}$ such that $T_H=V$ for every linear hyperplane $H$ of $\K^n$
that includes $\K^r \times \{0\}$. We lose no generality in assuming that $V=\K^s \times \{0\}$.
Using the invariance trick, we deduce that $S$ contains every matrix of $\WS_{n,s,0}(\K)$ with diagonal zero.
Writing every matrix $M$ of $\calS$ as
$$M=\begin{bmatrix}
[?]_{s \times s} & [?]_{s \times (n-s)} \\
[?]_{(n-s) \times s} & J(M)
\end{bmatrix}$$
with $J(M) \in \Mats_{n-s}(\K)$, we use Corollary \ref{extractioncorsym} together with $s<n-s-1$ to find that
every $2$ by $2$ submatrix of $J(M)$ is singular \emph{provided that $M$ be non-alternating.}
Hence for all $M \in \calS \setminus \Mata_n(\K)$, the matrix $J(M)$ has rank at most $1$.

Assume now that there are (distinct) matrices $M_1$ and $M_2$ in $\calS \setminus \Mata_n(\K)$ such that
$J(M_1)$ and $J(M_2)$ are non-collinear. Then, for some $Q \in \GL_{n-s}(\K)$ and some pair $(a,b) \in (\K \setminus \{0\})^2$,
we have $J(M_1)=a\,Q E_{1,1} Q^T$ and $J(M_2)=b\,Q E_{2,2} Q^T$.
Choosing $t \in \K \setminus \{0,1\}$, we see that $J(tM_1+(1-t)M_2)=Q\,\bigl(ta E_{1,1}+(1-t)b E_{2,2}\bigr)\,Q^T$, and hence
$J(tM_1+(1-t)M_2)$ has rank $2$.

Now, we obtain a contradiction by distinguishing between two cases:
\begin{itemize}
\item If $\# \K>3$, then the line going through $M_1$ and $M_2$ contains at most one point of $\Mata_n(\K)$,
whence we can choose $t \in \K \setminus \{0,1\}$ such that $tM_1+(1-t)M_2 \not\in \Mata_n(\K)$. This is a contradiction.

\item Assume now that $\# \K=3$. Then $\calS \cap \Mata_n(\K) \subset \{0\}$, and hence $\urk J(\calS) \leq 1$.
Hence, we obtain a contradiction by taking $t:=-1$ and by applying the above result to $tM_1+(1-t)M_2$.
\end{itemize}
It follows that there is a rank $1$ symmetric matrix $U \in \Mats_{n-s}(\K)$ such that
$J(M) \in \Vect(U)$ for all $M \in S \setminus \Mata_n(\K)$. Since the affine space
$\calS$ is generated by its non-alternating matrices we deduce that $J(\calS) \subset \Vect(U)$.
As $\Vect(U)$ is congruent to $\Vect(E_{1,1})$, we conclude that $\calS$ is congruent to a subspace of $\WS_{n,s,1}(\K)$.
\end{proof}

\subsection{The symmetric case, with $r=n-1$}

In the preceding two results, we always assumed that the upper-rank was less than $n-1$, a condition that was crucial in the proofs.
Yet, we shall need to deal with the case when the upper-rank is $n-1$. In that situation, our result
is not as good as the previous ones, but it will be good enough in order to prove Theorem \ref{maintheosym}.
First of all, we recall the known result in the case when $\K$ has characteristic not $2$:
this is deduced from Lemma 2.9 of \cite{dSPsym}.

\begin{prop}\label{oldkeylemmasymmetricsubfull}
Assume that $\K$ has characteristic not $2$.
Let $\calS$ be an affine subspace of $\Mats_n(\K)$, with translation vector space denoted by $S$.
Assume that $\urk \calS \leq n-1$.
Then, there exists a linear hyperplane $H$ of $\K^n$ such that $\dim S_H \leq \bigl\lfloor \frac{n-1}{2}\bigr\rfloor$.
\end{prop}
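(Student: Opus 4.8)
The statement is announced as a consequence of Lemma~2.9 of \cite{dSPsym}; lacking that lemma's precise form here, I would instead give the following short self-contained argument, which is in any case the natural one. Set $r:=\urk \calS$ and fix a matrix $A\in\calS$ with $\rk A=r$ — it is essential to pick an element of \emph{maximal} rank. Restricting the bilinear form attached to $A$ to a complement of $\Ker A$ in $\K^n$ shows that $A$ is congruent to a matrix $\begin{bmatrix} P & 0 \\ 0 & 0\end{bmatrix}$ with $P\in\GL_r(\K)\cap\Mats_r(\K)$; since the conclusion of the proposition is congruence-invariant (if $\calS'=P\calS P^T$ then $S'_H=P\,S_{P^TH}\,P^T$, which has the dimension of $S_{P^TH}$), no generality is lost in assuming that $A$ has this form. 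I then take $H:=\K^{n-1}\times\{0\}$ and prove the stronger bound $\dim S_H\leq\lfloor r/2\rfloor$, which suffices because $r\leq n-1$.

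Here is the computation. A matrix belongs to $S_H$ exactly when it lies in $S$ and its upper-left $(n-1)\times(n-1)$ block vanishes, so, partitioning $\K^{n-1}=\K^r\oplus\K^{\,n-1-r}$, every $M\in S_H$ has the shape $M=\begin{bmatrix}0_r & 0 & v_1\\ 0 & 0 & v_2\\ v_1^T & v_2^T & c\end{bmatrix}$. For each $t\in\K$ we have $A+tM\in\calS$, hence $\rk(A+tM)\leq r$, and the Schur complement lemma (Lemma~\ref{schurcomplementlemma}) applied to the invertible block $P$ gives $\rk(A+tM)=r+\rk\begin{bmatrix}0 & tv_2\\ tv_2^T & tc-t^2\,v_1^TP^{-1}v_1\end{bmatrix}$; therefore this matrix is zero for every $t$. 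Taking $t=1$ yields $v_2=0$, and since $\#\K\geq 3$ (because $\K$ has characteristic not $2$) the polynomial $t\mapsto tc-t^2\,v_1^TP^{-1}v_1$ of degree at most $2$ must be identically zero, so $c=0$ and $v_1^TP^{-1}v_1=0$. Consequently $M$ is determined by $v_1$, so $M\mapsto v_1$ is a linear isomorphism from $S_H$ onto a subspace $V$ of $\K^r$ all of whose vectors are isotropic for the non-degenerate symmetric bilinear form $b\colon(x,y)\mapsto x^TP^{-1}y$; polarizing (again using that the characteristic is not $2$) shows that $V$ is totally $b$-isotropic, whence $\dim S_H=\dim V\leq\lfloor r/2\rfloor\leq\lfloor (n-1)/2\rfloor$.

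The argument has no real obstacle beyond two points already encoded in the hypotheses. First, one must take $A$ of maximal rank, for otherwise the Schur reduction only bounds a residual rank rather than forcing the block to vanish. Second, the assumption $\operatorname{char}\K\neq 2$ is used twice and cannot be dropped: once to force the full vanishing of the quadratic polynomial in $t$ (which fails for $\K=\F_2$), and once to go from ``$V$ consists of isotropic vectors'' to ``$V$ is totally isotropic''. The final bound $\lfloor (n-1)/2\rfloor$ then drops out of the standard estimate that a totally isotropic subspace of a non-degenerate symmetric form in dimension $r$ has dimension at most $\lfloor r/2\rfloor$, together with $r\leq n-1$.
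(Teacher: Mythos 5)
Your proof is correct and self-contained. The paper does not actually give an argument here --- it simply records the statement as deduced from Lemma~2.9 of~\cite{dSPsym} --- so there is no in-text proof to compare against directly. That said, the technique you use is exactly the one the present paper applies in the two adjacent results, Propositions~\ref{keylemmasymmetriceven} and~\ref{keylemmasymmetricodd}: normalize a maximal-rank element of $\calS$ as $\begin{bmatrix}P & 0\\ 0 & 0\end{bmatrix}$ with $P\in\GL_r(\K)\cap\Mats_r(\K)$, take $H=\K^{n-1}\times\{0\}$, compute the Schur complement in $A+tM$ for $M\in S_H$, conclude that $S_H$ embeds linearly into the set of $b$-isotropic vectors of $\K^r$ for the non-degenerate form $b(x,y)=x^TP^{-1}y$, and bound the dimension by $\lfloor r/2\rfloor$ via total isotropy. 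You in fact obtain the slightly stronger estimate $\dim S_H\le\lfloor \urk\calS/2\rfloor$, and the two points at which you invoke that $\K$ has characteristic not $2$ (to get $\#\K\ge 3$, forcing $c=0$ and $v_1^TP^{-1}v_1=0$ from the vanishing of $tc-t^2\,v_1^TP^{-1}v_1$; and to polarize ``isotropic'' into ``totally isotropic'') are exactly the places where the hypothesis is needed. The congruence-invariance reduction at the start is also sound: $S'_H=P\,S_{P^TH}\,P^T$ has the same dimension as $S_{P^TH}$. One could note that the degenerate cases $r=0$ (then $\calS=\{0\}$) and $r=n-1$ (no $v_2$ block) are handled trivially by the same computation, but your presentation already covers them implicitly.
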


Next, we give the result that will be used in the general case.

\begin{prop}\label{keylemmasymmetricsubfull}
Let $\calS$ be an affine subspace of $\Mats_n(\K)$ that is not included in $\Mata_n(\K)$.
Assume that $\# \K>2$. Assume that all the matrices of $\calS$ are singular and that $n \geq 5$.
Denote by $\epsilon$ the remainder of $n-1$ mod $2$, and by $s$ the quotient.
Then, $\calS$ is congruent to a subspace of $\WS_{n,s,\epsilon}$ or there exists an $\calS$-adapted linear hyperplane
$H$ of $\K^n$ such that $\dim S_H \leq n-3$.
\end{prop}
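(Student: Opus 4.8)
Write $r_0 := \urk(\calS \setminus \Mata_n(\K))$; this is well defined and $\geq 1$ since $\calS \not\subseteq \Mata_n(\K)$, and $r_0 \leq n-1$ because all matrices of $\calS$ are singular. The plan is to reduce to the case $r_0 = n-1$. If $2 \leq r_0 \leq n-2$, write $r_0 = 2s'+\epsilon'$ with $s' \geq 1$ and $\epsilon' \in \{0,1\}$ and apply Proposition \ref{keylemmasymmetriceven} (if $\epsilon' = 0$) or Proposition \ref{keylemmasymmetricodd} (if $\epsilon' = 1$, so that $r_0 \geq 3$): then either $\calS$ is congruent to a subspace of $\WS_{n,s',\epsilon'}(\K)$ — which is included in a congruent copy of $\WS_{n,s,\epsilon}(\K)$, since $\WS_{n,a,b}(\K) \subseteq \WS_{n,a',b'}(\K)$ whenever $a \leq a'$ and $a+b \leq a'+b'$, and these inequalities follow easily from $2s'+\epsilon' = r_0 \leq n-2 = 2s+\epsilon-1$ — or there is an $\calS$-adapted hyperplane $H$ with $\dim S_H < s' \leq s$, and $s = \bigl\lfloor\tfrac{n-1}{2}\bigr\rfloor \leq n-3$ because $n \geq 5$. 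The degenerate case $r_0 = 1$ is settled separately: using $\#\K > 2$ one checks directly that then $\calS$ is congruent to an affine subset of the line $\K E_{1,1}$, hence to a subspace of $\WS_{n,1,0}(\K) \subseteq \WS_{n,s,\epsilon}(\K)$.

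From now on assume $r_0 = n-1$. After a congruence, $\calS$ contains a matrix
$$A = \begin{bmatrix} P & [0]_{(n-1)\times 1} \\ [0]_{1\times(n-1)} & 0 \end{bmatrix} \quad\text{with } P \in \GL_{n-1}(\K) \cap \Mats_{n-1}(\K) \text{ non-alternating.}$$
Let $H := \K^{n-1}\times\{0\}$. For $N \in S_H$, written $N = \begin{bmatrix} [0]_{(n-1)\times(n-1)} & C(N) \\ C(N)^T & d(N)\end{bmatrix}$, applying Lemma \ref{schurcomplementlemma} to $A + tN \in \calS$ (whose rank is at most $n-1$) for all $t \in \K$ gives, since $\#\K>2$, that $d(N) = 0$ and $C(N)^T P^{-1} C(N) = 0$. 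Hence every nonzero matrix of $S_H$ has rank exactly $2$, so $S_H$ contains no rank $1$ matrix, and $H$ is $\calS$-adapted (condition (b) holds since $P$ is non-alternating). Moreover $N \mapsto C(N)$ is injective on $S_H$, so $\dim S_H = \dim C(S_H)$, and $C(S_H)$ is a linear subspace of $\K^{n-1}$ on which the quadratic form $q : v \mapsto v^T P^{-1} v$ vanishes. If $\K$ has characteristic not $2$, then $q$ is non-degenerate, so $\dim S_H \leq \bigl\lfloor\tfrac{n-1}{2}\bigr\rfloor = s \leq n-3$ and the hyperplane $H$ settles the proposition.

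Now suppose $\K$ has characteristic $2$. Then $q(v) = \sum_i (P^{-1})_{ii}\,v_i^2$ is additive, so $\{q = 0\}$ is a \emph{linear} subspace of $\K^{n-1}$; it is proper because $P^{-1}$ is non-alternating, whence $\dim S_H = \dim C(S_H) \leq n-2$. If $\dim S_H \leq n-3$ the proof is complete, so assume $\dim S_H = n-2$, that is $C(S_H) = \{q = 0\}$ is a hyperplane of $\K^{n-1}$. Then $q = c\,\ell^2$ for some linear form $\ell$ on $\K^{n-1}$ and some $c \in \K\setminus\{0\}$; performing a further congruence by a matrix $\Diag(Q_{11},1)$ carrying $\ell$ to the first coordinate (this preserves the shape of $A$ and the hyperplane $H$), we may assume $\{q = 0\} = \{v_1 = 0\}$, hence
$$S_H = \Vect\bigl(E_{i,n}+E_{n,i} \;:\; i \in \lcro 2,n-1\rcro\bigr).$$

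It remains to show that, in this last configuration, $\calS$ is congruent to a subspace of $\WS_{n,s,\epsilon}(\K)$; this is the main obstacle. Since $E_{i,n}+E_{n,i} \in S$ for $i \in \lcro 2,n-1\rcro$ and every matrix of $\calS$ has rank $\leq n-1$, Corollary \ref{extractioncorsym} yields $\rk M_{i,n} \leq n-3$ for all $M \in \calS$ and all $i \in \lcro 2,n-1\rcro$, i.e.\ deleting rows and columns $i$ and $n$ from any matrix of $\calS$ produces a singular matrix. As in the final part of the proof of Proposition \ref{keylemmaalternate}, I would now bring in the invariance trick: the congruences preserving $A$ form a group containing all $I_n + \alpha E_{j,n}$ with $j \in \lcro 2,n-1\rcro$, and for such a congruence $Q$ the transformed space $QSQ^T$ again falls under the previous discussion; pushing the relation $S_H = \Vect\bigl(E_{i,n}+E_{n,i} : i \in \lcro 2,n-1\rcro\bigr)$ through these congruences shows that, up to replacing $H$ by a suitable hyperplane that is still $\calS$-adapted, either $\dim S_H \leq n-3$ (and we are done) or $S$ contains $E_{i,j}+E_{j,i}$ for all pairs $(i,j)$ outside a fixed $(s+1)$-element block of indices. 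In the latter case, iterating Corollary \ref{extractioncorsym} along increasing sequences of indices in that block forces every matrix of $\calS$ to vanish on the corresponding $(s+1)$-dimensional coordinate subspace, which, together with membership of $A$, places $\calS$ inside $\WS_{n,s,\epsilon}(\K)$. The delicate point — and the reason this result is weaker than Propositions \ref{keylemmaalternate}, \ref{keylemmasymmetriceven} and \ref{keylemmasymmetricodd} — is controlling exactly which corner relations the invariance trick produces, so that the block obtained has size precisely $s+1$ and matches $\WS_{n,s,\epsilon}(\K)$ rather than something larger; this is where the hypotheses $n \geq 5$ and $\#\K > 2$ are used.
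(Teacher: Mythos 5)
Your reduction to the case $r_0=n-1$ and the analysis when the characteristic is not $2$ (or when $\dim S_H\leq n-3$) are correct and essentially reproduce the paper's argument. The gap is in your final paragraph, which is only a plan, and whose claimed outcome is not what actually happens.

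Concretely: once you reach the situation where $\K$ has characteristic $2$, $r_0=n-1$ and $\dim S_H=n-2$, the invariance trick produces far more than relations ``outside an $(s+1)$-element block.'' Running the congruences $Q_j$ with $j\in\lcro 1,n-1\rcro$ (not just $j\in\lcro 2,n-1\rcro$) and using $Q_j^{-1}(E_{i,n}+E_{n,i})(Q_j^{-1})^T=E_{i,n}+E_{n,i}+E_{i,j}+E_{j,i}$ yields that $S$ contains $E_{i,j}+E_{j,i}$ for \emph{all} $(i,j)$ with $i\neq j$ outside a $2$-element block of indices (in your normalisation, $\{1,n\}$; in the paper's, $\{n-1,n\}$). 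A $2$-element block is smaller than $\{1,\dots,s+1\}$ as soon as $n\geq 5$, so no congruent copy of $\WS_{n,s,\epsilon}(\K)$ can contain $S$, and your intended conclusion is unreachable. The correct conclusion is a \emph{contradiction}: subtracting suitable $E_{i,j}+E_{j,i}\in S$ from $A$ produces a non-zero diagonal matrix $D=\Diag(a_1,\dots,a_{n-1},0)$ in $\calS$ (non-zero because $P$ is non-alternating), and then, choosing a permutation $\sigma$ of $\lcro 1,n-1\rcro$ with $a_{\sigma(n-1)}\neq 0$, $\sigma(1)\neq n-1$, and scalars $t_i$ with $t_i^2\neq a_{\sigma(2i)}a_{\sigma(2i+1)}$ (possible since $\#\K>2$), the matrix
$$D+(E_{\sigma(1),n}+E_{n,\sigma(1)})+\sum_{i=1}^{\lfloor n/2\rfloor-1}t_i\bigl(E_{\sigma(2i),\sigma(2i+1)}+E_{\sigma(2i+1),\sigma(2i)}\bigr)$$
lies in $\calS$ and is invertible, contradicting the hypothesis that every matrix of $\calS$ is singular. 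So in this last sub-case the alternative ``$\calS$ congruent to a subspace of $\WS_{n,s,\epsilon}(\K)$'' never actually arises; the hypothesis ``no $\calS$-adapted hyperplane with $\dim S_H\leq n-3$'' is simply untenable. Your proof needs this contradiction argument in place of the lifting sketch.

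One minor point: you dispose of $r_0=1$ by asserting that $\calS$ is then congruent to a subset of $\K E_{1,1}$, but this requires an argument (a priori $\calS$ could contain many alternating matrices of higher rank). The paper sidesteps this by applying Proposition \ref{oldkeylemmasymmetricsubfull} whenever $\K$ has characteristic not $2$, regardless of $r_0$, and in characteristic $2$ the case $r_0=1$ would still need a word. This is a small gap compared with the main one above.
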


\begin{proof}
Since $n \geq 5$, we see that $\lfloor \frac{n-1}{2}\rfloor \leq n-3$, and
hence the result follows directly from Proposition \ref{oldkeylemmasymmetricsubfull} if $\K$ has characteristic not $2$.

In the remainder of the proof, we assume that $\K$ has characteristic $2$
and that there is no $\calS$-adapted linear hyperplane $H$ of $\K^n$ such that $\dim S_H \leq n-3$.
Denote by $r$ the upper-rank of $\calS \setminus \Mata_n(\K)$. If
$r<n-1$ then $\lfloor \frac{r}{2}\rfloor -1 \leq n-3$, and hence the result follows from one of
Propositions \ref{keylemmasymmetriceven} and \ref{keylemmasymmetricodd}.
In the rest of the proof, we assume that $r=n-1$.
Hence, we have a rank $n-1$ non-alternating matrix $M_1$ in $\calS$.
Without loss of generality, we can assume that
$$M_1=\begin{bmatrix}
P & [0]_{(n-1) \times 1} \\
[0]_{1 \times (n-1)} & 0
\end{bmatrix}$$
where $P$ is an invertible matrix of $\Mats_{n-1}(\K) \setminus \Mata_{n-1}(\K)$.
Note that
$$G:=\bigl\{X \in \K^{n-1} : \; X^T P^{-1} X=0\bigr\} \times \{0\},$$
is a linear subspace of $\K^n$ since $\K$ has characteristic $2$.

Set $H:=\K^{n-1} \times \{0\}$.  We claim that $H$ is $\calS$-adapted and $S_H\,e_n \subset G$.
Let $N \in S_H$. We write
$$N=\begin{bmatrix}
[0]_{(n-1) \times (n-1)} & C(N) \\
C(N)^T & D(N)
\end{bmatrix}$$
with $C(N) \in \K^{n-1}$ and $D(N) \in \K$.
As in the previous two proofs, we find that
$$D(N)=0 \quad \text{and} \quad C(N)^T P^{-1} C(N)=0.$$
Hence, $S_H e_n \subset G$ and $\dim S_H=\dim (S_H e_n)$.
Moreover, we see that $S_H$ contains no rank $1$ matrix and that the quadratic form
that is attached to $M_1$ does not vanish everywhere on $H$, whence $H$ is $\calS$-adapted.

Since $P$ is not alternating, the matrix $P^{-1}$ is not alternating either, whence $G$ is a proper linear subspace of $\K^{n-1} \times \{0\}$.
Yet $\dim S_H\, e_n=\dim S_H \geq n-2$, and we obtain the claimed equality $S_H e_n=G$.

Without loss of generality, we can now assume that $G \subset \K^{n-2} \times \{0\}$.
Then, we learn that $S$ contains $E_{i,n}+E_{n,i}$ for all $i \in \lcro 1,n-2\rcro$.
Next, fix $j \in \lcro 1,n-1\rcro$ and consider the affine space $\calS'$ that is deduced from $\calS$
by performing the elementary congruence transformation $(L,C)_j \leftarrow (L,C)_j- (L,C)_n$.
Denote by $S'$ its translation vector space, and note that $\urk \calS'=\urk \calS$ and $\calS'$ still contains $M_1$.
Hence, the above proof shows that $S'$ contains $E_{i,n}+E_{n,i}$ for all $i \in \lcro 1,n-2\rcro$.
Fix $i \in \lcro 1,n-2\rcro$. Then, $S$ contains $E_{i,n}+E_{i,j}+E_{n,i}+E_{j,i}$.
Since $S$ contains $E_{i,n}+E_{n,i}$, we conclude that $S$ contains $E_{i,j}+E_{j,i}$.

Hence, we have shown that $S$ contains $E_{i,j}+E_{j,i}$ for all $(i,j)\in \lcro 1,n\rcro^2 \setminus \{n-1,n\}^2$ such that $i \neq j$.

Since $P$ is non-alternating, we deduce that $\calS$ contains a non-zero diagonal matrix of the form
$D=\Diag(a_1,\dots,a_{n-1},a_n)$ with $a_n=0$.
Since $n \geq 4$, we can choose a permutation $\sigma$ of $\lcro 1,n-1\rcro$ such that
$a_{\sigma(n-1)} \neq 0$ and $\sigma(1) \neq n-1$.
Then, for each integer $i$ between $1$ and $\left\lfloor \frac{n}{2}\right\rfloor-1$, we
choose $t_i \in \K$ such that $t_i^2 \neq a_{\sigma(2i)} a_{\sigma(2i+1)}$.
One checks that the matrix
$$D+(E_{\sigma(1),n}+E_{n,\sigma(1)})
+\underset{i=1}{\overset{\left\lfloor \frac{n}{2}\right\rfloor-1}{\sum}} t_i (E_{\sigma(2i),\sigma(2i+1)}+E_{\sigma(2i+1),\sigma(2i)})$$
is invertible, which contradicts our assumption because it belongs to $\calS$.
\end{proof}

\section{Lifting results}\label{liftingsection}

\subsection{Lifting results of the first kind}

\begin{prop}\label{lifting1alt}
Let $n$ and $r$ be positive integers with $r$ odd and $n>r$. Let
$V$ be a linear subspace of $\WA_{n-1,0,r}(\K)$, and $f : V \rightarrow \K^{n-1}$ be a linear map.
Assume that every matrix of
$$S:=\biggl\{\begin{bmatrix}
A & f(A) \\
-f(A)^T & 0
\end{bmatrix} \mid A \in V\biggr\}$$
has rank less than $r$ and that $\dim V>\dbinom{r-1}{2}+2$.
Then, $S$ is congruent to a subspace of $\WA_{n,0,r}(\K)$.
\end{prop}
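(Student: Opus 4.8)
The plan is first to compute the rank of a general matrix of $S$ in terms of its pieces. Writing $f(A)=\begin{bmatrix}C_1(A)\\ C_2(A)\end{bmatrix}$ with $C_1(A)\in\K^r$, $C_2(A)\in\K^{n-1-r}$, and identifying $V$ with its image $\tilde V$ under the (injective, since $V\subset\WA_{n-1,0,r}(\K)$) map $A\mapsto\tilde A$ extracting the top-left $r\times r$ block, every matrix of $S$ reads
$$M(A)=\begin{bmatrix}\tilde A & 0 & C_1(A)\\ 0 & 0 & C_2(A)\\ -C_1(A)^T & -C_2(A)^T & 0\end{bmatrix}.$$
Elementary row and column operations show that $\rk M(A)=\rk\tilde A$ when $C_2(A)=0$ and $C_1(A)\in\im\tilde A$, and $\rk M(A)=\rk\tilde A+2$ otherwise; hence $\rk M(A)\le r-1$ forces $\rk\tilde A\le r-3$ whenever $C_2(A)\neq0$ or $C_1(A)\notin\im\tilde A$. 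I would also translate the dimension hypothesis: from $\dim\tilde V=\dim V>\binom{r-1}{2}+2=\binom{r}{2}-(r-3)$ we get $\codim_{\Mata_r(\K)}\tilde V\le r-4$, and it is worth recording the two identities $\binom{r-1}{2}+2=a_{r+1,1,r-2}$ and $\binom{r}{2}=a_{r+1,0,r}$, which make the thresholds below fit exactly. (For odd $r\le3$ the statement is vacuous, since then $\dim V\le\binom{r}{2}\le\binom{r-1}{2}+2$; so assume $r\ge5$.)

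\emph{Step 1: $C_2=0$.} Suppose not, and pick a coordinate of $C_2$ that is a nonzero linear form $\varphi$ on $V$; set $V_1:=\Ker\varphi$ and fix $A_1\in V\setminus V_1$. For every $A\in V_1$ the vector $C_2(A_1+A)$ has the same nonzero $\varphi$-coordinate as $C_2(A_1)$, so $C_2(A_1+A)\neq0$, and therefore $\rk(\tilde A_1+\tilde A)\le r-3$ by the rank formula. Thus $\tilde A_1+\widetilde{V_1}$ is an affine subspace of $\Mata_r(\K)$ with upper rank at most $r-3$ and dimension $\dim V-1\ge\binom{r-1}{2}+2$, which contradicts the dimension bound for affine bounded-rank subspaces of $\Mata_r(\K)$ (Corollary~3.1 of~\cite{dSPsym}); indeed that bound is $\max(a_{r,0,r-2},a_{r,(r-3)/2,1})$, which one checks is strictly less than $\binom{r-1}{2}+2$. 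Hence $C_2=0$.

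\emph{Step 2: recovering $C_1$.} With $C_2=0$, a permutation of coordinates makes $M(A)$ congruent to $B(A)\oplus0_{n-1-r}$, where $B(A):=\begin{bmatrix}\tilde A & C_1(A)\\ -C_1(A)^T & 0\end{bmatrix}\in\Mata_{r+1}(\K)$. So $B(V)$ is a linear subspace of $\Mata_{r+1}(\K)$ with $\urk B(V)=\urk S\le r-1$ and $\dim B(V)=\dim V>\binom{r-1}{2}+2=a_{r+1,1,r-2}$; by strict convexity of $(a_{r+1,k,r-2k})_k$ this also exceeds $a_{r+1,(r-3)/2,3}$, so Theorem~\ref{maintheoalt}, applied to $B(V)$ with the strictly smaller parameters $(r+1,r-1)$ in place of $(n,r)$ (legitimate inside the global induction on $(n,r)$, as $r+1\le n$ and $r-1<r$), gives that $B(V)$ is congruent to a subspace of $\WA_{r+1,0,r}(\K)$ or of $\WA_{r+1,(r-1)/2,1}(\K)$. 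The latter is impossible: for $r\ge9$ by dimension, since $\dim B(V)\ge\binom{r-1}{2}+3>a_{r+1,(r-1)/2,1}$; and for $r\in\{5,7\}$ because it would force $B(V)$ to be an entire congruent copy of $\WA_{r+1,(r-1)/2,1}(\K)$, which — being the space of alternating $(r+1)\times(r+1)$ matrices with a prescribed $\frac{r+3}{2}$-dimensional zero block — contains a nonzero matrix supported on the single row and column that $B(A)$ adjoins to $\tilde A$, so that $\dim P(B(V))<\dim B(V)=\dim V=\dim\tilde V$, a contradiction. Therefore $B(V)$ is congruent to a subspace of $\WA_{r+1,0,r}(\K)$, i.e.\ $\bigcap_{A\in V}\Ker B(A)\neq\{0\}$. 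Pick $0\neq\begin{bmatrix}Y\\ \lambda\end{bmatrix}$ in this common kernel: then $\tilde A Y+\lambda C_1(A)=0$ and $C_1(A)^TY=0$ for all $A\in V$. If $\lambda=0$ then $\tilde AY=0$ for all $A$ with $Y\neq0$, so $\tilde V$ lies in the codimension-$(r-1)$ space $\{N\in\Mata_r(\K):NY=0\}$, contradicting $\codim_{\Mata_r(\K)}\tilde V\le r-4$; hence $\lambda\neq0$ and $C_1(A)=\tilde AX$ with $X:=-\lambda^{-1}Y\in\K^r$. Finally, conjugating $S$ by $\begin{bmatrix}I_r&0&0\\ 0&I_{n-1-r}&0\\ -X^T&0&1\end{bmatrix}$ replaces $C_1(A)$ by $C_1(A)-\tilde AX=0$ (and $C_2$ is already $0$), so $S$ becomes the space of matrices supported on the top-left $r\times r$ block, a subspace of $\WA_{n,0,r}(\K)$.

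The hard part is Step 2, and within it the reduction to the \emph{small} ambient space $\Mata_{r+1}(\K)$: it is only there that the modest hypothesis $\dim V>\binom{r-1}{2}+2$, which is tiny compared with the size of $S$ inside $\Mata_n(\K)$, becomes the near-extremal threshold $a_{r+1,1,r-2}$ that the classification of large bounded-rank alternating spaces can exploit. Two points then need care — that the use of Theorem~\ref{maintheoalt} is at strictly smaller parameters, so the induction does not loop, and the disposal of the spurious outcome $\WA_{r+1,(r-1)/2,1}(\K)$, where the structural input ``$\tilde V=P(B(V))$ has codimension $\le r-4$'' is genuinely used. (A variant of Step 2 instead argues from Corollary~3.1 of~\cite{dSPsym} that $A\mapsto C_1(A)$ is range-compatible on $\tilde V$ and then invokes the results of~\cite{dSPRCsym} on range-compatible maps; I prefer the route above as being more self-contained.) What is left is the routine verification of the numerical inequalities $\max(a_{r,0,r-2},a_{r,(r-3)/2,1})<\binom{r-1}{2}+2$ and $a_{r+1,(r-3)/2,3}\le a_{r+1,1,r-2}$.
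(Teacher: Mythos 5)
Your Step~1 is correct and is a genuine alternative to the paper's: instead of invoking the spanning result (Corollary~3.1 of \cite{dSPsym}, that $V'$ is generated by its rank-$(r-1)$ elements) to kill $C_2$ coordinate by coordinate, you exhibit an affine subspace $\tilde A_1+\widetilde{V_1}$ of $\Mata_r(\K)$ with upper rank $\leq r-3$ and dimension $\geq \binom{r-1}{2}+2$, and derive a contradiction from the extremal dimension bound. Both routes work; yours trades the spanning lemma for the numerical inequality $\max\bigl(a_{r,0,r-2},a_{r,(r-3)/2,1}\bigr)<\binom{r-1}{2}+2$, which you correctly verify.

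However, your Step~2 has a genuine circularity that undermines the whole argument. You apply Theorem~\ref{maintheoalt} to $B(V)\subset\Mata_{r+1}(\K)$, i.e.\ at parameters $(r+1,r-1)$, and justify this by ``the global induction on $(n,r)$.'' But look at how Theorem~\ref{maintheoalt} is proved at $(r+1,r-1)$: there $2s'=r-1<(r+1)-1$, and in Case~1 of its proof ($m=0$) one ends up with $P(S)\subset\Mata_r(\K)=\WA_{r,0,r}(\K)$ and must invoke Proposition~\ref{lifting1alt} at parameters $(r+1,r)$. By your own Step~2, Proposition~\ref{lifting1alt} at $(r+1,r)$ in turn calls Theorem~\ref{maintheoalt} at $((r+1),(r-1))$. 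So the cycle
$$\text{Thm }(r+1,r-1)\;\longrightarrow\;\text{Prop }(r+1,r)\;\longrightarrow\;\text{Thm }(r+1,r-1)$$
is unavoidable, and no well-founded order on the pairs $(n,r)$ can break it: going from the Theorem to the Proposition increases $r$ while fixing $n$, and going back fixes both. This is precisely why the paper does \emph{not} prove Proposition~\ref{lifting1alt} by appealing to Theorem~\ref{maintheoalt}; instead it uses two external inputs -- Corollary~3.1 of \cite{dSPsym} (spanning by rank-$(r-1)$ matrices) to show $C_1$ is range-compatible, and Theorem~1.6 of \cite{dSPRCsym} (range-compatible linear maps on large subspaces of $\Mata_r(\K)$ are local) to conclude $C_1(N)=NY$. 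You mention this route in a parenthetical and dismiss it as ``less self-contained,'' but it is the one that actually closes.

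A secondary issue, independently of the circularity: your disposal of the outcome $B(V)\cong\WA_{r+1,(r-1)/2,1}(\K)$ for $r\in\{5,7\}$ is not a proof as written. That $\WA_{r+1,(r-1)/2,1}(\K)$ in standard position contains $E_{1,r+1}-E_{r+1,1}$ does not mean that a \emph{congruent copy} of it contains a nonzero matrix supported on the last row and column; the congruence may move the support arbitrarily, and a dimension count ($a_{r+1,(r-1)/2,1}+r<\binom{r+1}{2}$ for $r=5,7$) shows the intersection with $\Ker P$ is not forced. What \emph{does} work is the following: if $B(V)$ admits an $\frac{r+3}{2}$-dimensional totally isotropic subspace $W\subset\K^{r+1}$, then $W\cap(\K^r\times\{0\})$ has dimension $\geq\frac{r+1}{2}$, and its projection to $\K^r$ is totally isotropic for $\tilde V$; but that imposes $\binom{(r+1)/2}{2}$ independent linear conditions on $\Mata_r(\K)$, which for $r\geq 5$ exceeds the codimension bound $\codim\tilde V\leq r-4$. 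You should replace your phrase about ``a nonzero matrix supported on the single row and column'' with this codimension argument. Still, this fix is moot unless the circularity in the main appeal to Theorem~\ref{maintheoalt} is removed by switching to the range-compatibility route.
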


\begin{proof}
For all $A \in V$, let us write $A=\begin{bmatrix}
K(A) & [0]_{r \times (n-1-r)} \\
[0]_{(n-1-r) \times r} & [0]_{(n-1-r) \times (n-1-r)}
\end{bmatrix}$ with $K(A) \in \Mata_r(\K)$. Set $V':=K(V)$, so that
$$\dim V'=\dim V>\dbinom{r-1}{2}+2.$$
We have a linear map $C : V' \rightarrow \K^{n-1}$ such that $\forall A \in V, \; f(A)=C(K(A))$.
For $N \in V'$, let us split
$$C(N)=\begin{bmatrix}
C_1(N) \\
C_2(N)
\end{bmatrix} \quad \text{with $C_1(N) \in \K^r$ and $C_2(N) \in \K^{n-1-r}$.}$$

\vskip 2mm
\noindent \textbf{Step 1: $C_2=0$.} \\
For every $N \in V'$, we note that
$$\rk \begin{bmatrix}
N & [0]_{r \times (n-r-1)} & C_1(N) \\
[0]_{(n-r-1) \times r} & [0]_{(n-r-1) \times (n-r-1)} & C_2(N) \\
-C_1(N)^T & -C_2(N)^T & 0
\end{bmatrix} \geq \rk N+\rk C_2(N)+\rk (-C_2(N))^T$$
which yields $C_2(N)=0$ whenever $\rk N=r-1$.
Yet, by Corollary 3.1 from \cite{dSPsym}, the space $V'$ is spanned by its rank $r-1$ matrices.
It follows that $C_2=0$.

\vskip 2mm
\noindent \textbf{Step 2: $C_1$ is range-compatible.} \\
Given a linear hyperplane $G$ of $\K^r$, we prove that $C_1(N) \in G$ for all $N \in V'$ such that $\im N \subset G$.
Using a congruence transformation, we see that it suffices to consider the case when $G=\K^{r-1} \times \{0\}$.
Denote by $W$ the linear subspace of $V'$ consisting of its matrices $N$ such that $\im N \subset G$.
For all such $N$, let us write
$$N=\begin{bmatrix}
J(N) & [0]_{(r-1) \times 1} \\
[0]_{1 \times (r-1)} & 0
\end{bmatrix} \quad \text{with $J(N) \in \Mata_{r-1}(\K)$.}$$
Then, we have a linear form $\gamma : J(V') \rightarrow \K$ such that
$\gamma(M)$ is the last entry of $C_1\biggl(\begin{bmatrix}
M & [0]_{(r-1) \times 1} \\
[0]_{1 \times (r-1)} & 0
\end{bmatrix}\biggr)$ for all $M \in J(V')$.
Hence, for all $M \in J(V')$, the space $S$ contains a matrix of the form
$$\begin{bmatrix}
M & [0]_{(r-1) \times 1} & [0]_{(r-1) \times (n-r-1)} & [?]_{(r-1) \times 1} \\
[0]_{1 \times (r-1)} & 0 & [0]_{1 \times (n-r-1)} & \gamma(M) \\
[0]_{(n-r-1) \times (r-1)} & [0]_{(n-r-1) \times 1} & [0]_{(n-r-1) \times (n-r-1)} & [0]_{(n-r-1) \times 1} \\
[?]_{1 \times (r-1)} & -\gamma(M) & [0]_{1 \times (n-r-1)} & 0
\end{bmatrix}$$
and such a matrix has rank $r+1$ if $\rk M = r-1$ and $\gamma(M)\neq 0$.
Hence, $\gamma$ vanishes at every rank $r-1$ matrix of $J(V')$.
Yet,
$$\dim J(V') \geq \dim V'-(r-1) \geq \dbinom{r-2}{2}+2.$$
Hence, by Corollary 3.1 in \cite{dSPsym}, the vector space $J(V')$ is spanned by its matrices with rank $r-1$, which leads to $\gamma=0$.
This proves the claimed result, that is $C_1(N) \in G$ for all $N \in V'$ such that $\im N \subset G$.

Finally, for any $N \in V'$, we can find linear hyperplanes $G_1,\dots,G_k$ of $\K^r$ such that
$\im N=\underset{i=1}{\overset{k}{\bigcap}} G_i$, and the previous step yields
$C_1(N) \in \underset{i=1}{\overset{k}{\bigcap}} G_i=\im N$. Hence, $C_1$ is range-compatible.

\vskip 2mm
\noindent \textbf{Step 3: The final reduction.} \\
We know that $C_1$ is a range-compatible linear map from $V'$ to $\K^r$.
Yet, $V'$ is a linear subspace of $\Mata_r(\K)$ with codimension at most $r-4$.
By Theorem 1.6 of \cite{dSPRCsym}, the map $C_1$ is local, i.e.\ there exists $Y \in \K^r$ such that
$$\forall N \in V', \; C_1(N)=NY.$$
Setting
$$Q:=\begin{bmatrix}
I_r & [0]_{r \times (n-r-1)} & -Y \\
[0]_{(n-r-1) \times r} & I_{n-r-1} & [0]_{(n-r-1) \times 1} \\
[0]_{1 \times r} & [0]_{1 \times (n-r-1)} & 1
\end{bmatrix} \in \GL_n(\K),$$
we conclude that $Q^T S Q \subset \WA_{n,0,r}(\K)$, whence $S$ is congruent to a subspace of $\WA_{n,0,r}(\K)$.
\end{proof}

\begin{prop}\label{lifting1sym}
Let $n$ and $r$ be positive integers with $n>r$. Let
$V$ be a linear subspace of $\WS_{n-1,0,r}(\K)$, and let $f : V \rightarrow \K^{n-1}$ and $b : V \rightarrow \K$ be linear maps.
Assume that every matrix of
$$S:=\biggl\{\begin{bmatrix}
M & f(M) \\
f(M)^T & b(M)
\end{bmatrix} \mid M \in V\biggr\}$$
has rank less than or equal to $r$ and that $\dim V>\dbinom{r}{2}+2$.
Assume also that $\# \K>2$.
Then, $S$ is congruent to a subspace of $\WS_{n,0,r}(\K)$.
\end{prop}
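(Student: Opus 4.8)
The plan is to follow the three-step structure of the proof of Proposition~\ref{lifting1alt}, with the extra linear form $b$ carried along. Since $V\subset\WS_{n-1,0,r}(\K)$, write each $M\in V$ as $\begin{bmatrix} K(M) & [0] \\ [0] & [0]\end{bmatrix}$ with $K(M)\in\Mats_r(\K)$; then $K$ is a linear isomorphism from $V$ onto $V':=K(V)$, so there are linear maps $C:V'\to\K^{n-1}$ and $\beta:V'\to\K$ with $f=C\circ K$ and $b=\beta\circ K$, and $\dim V'=\dim V>\binom r2+2$. Splitting $C(N)=\begin{bmatrix}C_1(N)\\ C_2(N)\end{bmatrix}$ with $C_1(N)\in\K^r$ and $C_2(N)\in\K^{n-1-r}$, the element of $S$ attached to $N\in V'$ is the $n\times n$ matrix (blocks of sizes $r,\,n-1-r,\,1$) $\begin{bmatrix}N & [0] & C_1(N)\\ [0] & [0] & C_2(N)\\ C_1(N)^T & C_2(N)^T & \beta(N)\end{bmatrix}$.

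\emph{Step 1: $C_2=0$.} For $N\in V'$ with $\rk N=r$, Lemma~\ref{schurcomplementlemma} applied to the invertible corner $N$ shows that the attached matrix has rank $r+\rk\begin{bmatrix}[0] & C_2(N)\\ C_2(N)^T & \beta(N)-C_1(N)^TN^{-1}C_1(N)\end{bmatrix}$; since that rank is at most $r$, the Schur complement vanishes, giving $C_2(N)=0$ and $\beta(N)=C_1(N)^TN^{-1}C_1(N)$. Because $\#\K>2$ and $\dim V'>\binom r2+2$, the space $V'$ is spanned by its invertible matrices (the symmetric analogue of Corollary~3.1 of \cite{dSPsym}), so $C_2=0$ throughout.

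\emph{Step 2: $C_1$ is range-compatible, and the conclusion.} As in Proposition~\ref{lifting1alt}: for a linear hyperplane $G$ of $\K^r$, reduce by congruence to $G=\K^{r-1}\times\{0\}$, set $W:=\{N\in V':\im N\subset G\}$ (of codimension $\le r$ in $V'$), and on $W$ write $N=\begin{bmatrix}J(N) & [0]\\ [0] & 0\end{bmatrix}$ with $J(N)\in\Mats_{r-1}(\K)$; then $J|_W$ is injective and $\dim J(W)\ge\dim V'-r>\binom{r-1}2+1$. Writing $C_1(N)=\begin{bmatrix}C_1'(N)\\ \gamma(N)\end{bmatrix}$ on $W$, a second application of Lemma~\ref{schurcomplementlemma}, now to the corner $J(N)$ when $\rk N=r-1$, forces the $2\times2$ matrix $\begin{bmatrix}0 & \gamma(N)\\ \gamma(N) & \beta(N)-C_1'(N)^TJ(N)^{-1}C_1'(N)\end{bmatrix}$ to have rank $\le1$; since its determinant is $-\gamma(N)^2$, this yields $\gamma(N)=0$. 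As $J(W)$ is again spanned by its invertible matrices, $\gamma$ vanishes on all of $J(W)$, hence $C_1(N)\in G$ whenever $\im N\subset G$; intersecting over all such hyperplanes gives $C_1(N)\in\im N$ for every $N$. Now $V'$ has codimension $\le r-3$ in $\Mats_r(\K)$, so by the local-linearity theorem for range-compatible maps on spaces of symmetric matrices (the symmetric case of \cite{dSPRCsym}) there is $Y\in\K^r$ with $C_1(N)=NY$ for all $N\in V'$; substituting this into the identity of Step~1 and using $N^T=N$ gives $\beta(N)=Y^TNY$ for rank-$r$ matrices, hence for all $N\in V'$ by linearity and the spanning property. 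Conjugating $S$ by $Q:=\begin{bmatrix}I_r & [0] & -Y\\ [0] & I_{n-1-r} & [0]\\ [0] & [0] & 1\end{bmatrix}\in\GL_n(\K)$ then kills the last column (since $C_1(N)=NY$ and $C_2=0$) and the $(n,n)$ entry (since $\beta(N)=Y^TNY$), so $Q^TSQ\subset\WS_{n,0,r}(\K)$.

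The rank estimates are immediate from the Schur complement lemma; the real work is in securing the two auxiliary inputs in the right ranges, and that is where I expect the main obstacle to lie: one needs that a linear subspace of $\Mats_m(\K)$ of codimension at most $m-2$ is spanned by its invertible matrices when $\#\K>2$ (used for both $V'$ and $J(W)$, which is exactly why the hypotheses $\dim V>\binom r2+2$ and $\#\K>2$ are present), together with the classification of range-compatible linear maps on spaces of symmetric matrices over an arbitrary field with more than two elements. Beyond that, the only extra bookkeeping relative to Proposition~\ref{lifting1alt} is the propagation of the form $\beta$ through all the steps.
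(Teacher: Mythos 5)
Your proof is correct and follows essentially the same three-step structure the paper uses (and explicitly models on Proposition~\ref{lifting1alt}): write $V' = K(V)$, use the Schur complement lemma together with the fact that $V'$ is spanned by its rank-$r$ matrices (Corollary~5.1 of \cite{dSPsym}, the symmetric analogue of Corollary~3.1 used in the alternating case) to get $C_2=0$, then prove range-compatibility of $C_1$ by restricting to hyperplanes and running the same spanning argument on $J(W)$, then invoke the locality theorem for range-compatible homomorphisms from \cite{dSPRCsym}. The only cosmetic difference is in the handling of the scalar form: you carry the identity $\beta(N)=C_1(N)^T N^{-1} C_1(N)$ through and obtain $\beta(N)=Y^TNY$ directly, then absorb everything by a single congruence, whereas the paper first applies the congruence to reduce to $C_1=0$ and then, in a separate fourth step, observes that the (new) $b$ vanishes on rank-$r$ matrices and hence on all of $V'$ by the same spanning lemma. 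These two bookkeeping orders are interchangeable, and your dimension estimates ($\operatorname{codim}_{V'}W\le r$, hence $\dim J(W)>\binom{r-1}{2}+1$) land in the right range for the spanning corollary, matching the paper's use in the alternating case.
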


\begin{proof}
The proof is very similar to the one of Proposition \ref{lifting1alt}.
Again, for all $M \in V$, we write
$$M=\begin{bmatrix}
K(M) & [0]_{r \times (n-r)} \\
[0]_{(n-r) \times r} & [0]_{(n-r) \times (n-r)}
\end{bmatrix} \quad \text{with $K(M) \in \Mats_r(\K)$,}$$
and we set $V':=K(V)$, so that $\dim V'=\dim V>\dbinom{r}{2}+2$.
We have a linear map $C : V' \rightarrow \K^{n-1}$ such that $\forall M \in V, \; f(M)=C(K(M))$.
For $N \in V'$, let us split
$$C(N)=\begin{bmatrix}
C_1(N) \\
C_2(N)
\end{bmatrix} \quad \text{with $C_1(N) \in \K^r$ and $C_2(N) \in \K^{n-1-r}$.}$$

\vskip 2mm
\noindent \textbf{Step 1: $C_2=0$.} \\
The proof is similar to the corresponding step in the proof of Proposition \ref{lifting1alt},
using Corollary 5.1 of \cite{dSPsym} this time around.

\vskip 2mm
\noindent \textbf{Step 2: $C_1$ is range-compatible.} \\
Again, the proof of this claim is similar to the corresponding one for  Proposition \ref{lifting1alt},
using Corollary 5.1 of \cite{dSPsym}.

\vskip 2mm
\noindent \textbf{Step 3: Reduction to the case when $C_1=0$.} \\
We note that $V'$ is a linear subspace of $\Mats_r(\K)$ with codimension at most $r-3$.
Since $C_1$ is a range-compatible linear map, Corollary 1.6 of \cite{dSPRCsym}
shows that $C_1$ is local, yielding a vector $Y \in \K^r$ such that
$$\forall N \in V', \; C_1(N)=NY.$$
Then, by setting
$$Q:=\begin{bmatrix}
I_r & [0]_{r \times (n-r-1)} & -Y \\
[0]_{(n-r-1) \times r} & I_{n-r-1} & [0]_{(n-r-1) \times 1} \\
[0]_{1 \times r} & [0]_{1 \times (n-r-1)} & 1
\end{bmatrix} \in \GL_n(\K)$$
and, by replacing $S$ with $S':=Q^T S Q$, we see that the basic assumptions are still satisfied, but now we have
$C_1=0$. Thus, in the remainder of the proof we assume that $C_1=0$.

\vskip 2mm
\noindent \textbf{Step 4: $b=0$.} \\
Now, $f=0$. It is then obvious that the linear map $b$ vanishes at every rank $r$ matrix of $V'$.
Yet, using Corollary 5.1 from \cite{dSPsym} once more, we know that $V'$ is spanned by
its rank $r$ matrices, and we conclude that $b=0$. Therefore,
in our reduced situation we have shown that $S \subset \WS_{n,0,r}(\K)$, which completes the proof.
\end{proof}

\subsection{Lifting results of the second kind}

\begin{prop}\label{lifting2alt}
Let $n$ and $s$ be positive integers such that $2s<n-1$.
Let $\calS$ be a linear subspace of $\Mata_n(\K)$.
Let us write every matrix $M$ of $\calS$ as
$$M=\begin{bmatrix}
P(M) & [?]_{(n-1) \times 1} \\
[?]_{1 \times (n-1)} & 0
\end{bmatrix} \quad \text{with $P(M) \in \Mata_{n-1}(\K)$.}$$
Assume that $P(\calS) \subset \WA_{n-1,s,1}(\K)$, that
$\dim P(\calS)>a_{n-1,s,1}-(n-s-3)$, and that $\urk \calS \leq 2s$.
Then, $\calS$ is congruent to a subspace of $\WA_{n,s,1}(\K)$.
\end{prop}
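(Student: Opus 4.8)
The plan is to transfer the Atkinson--Lloyd classification of large bounded-rank spaces of rectangular matrices to $\calS$, following the lifting strategy sketched in the introduction. Since $P(\calS)\subset\WA_{n-1,s,1}(\K)$, every matrix $M\in\calS$ may be written as
$$M=\begin{bmatrix}
R(M) & -B(M)^T & c(M) \\
B(M) & [0]_{(n-s-1) \times (n-s-1)} & C(M) \\
-c(M)^T & -C(M)^T & 0
\end{bmatrix}$$
with $R(M)\in\Mata_s(\K)$, $B(M)\in\Mat_{n-s-1,s}(\K)$, $c(M)\in\K^s$ and $C(M)\in\K^{n-s-1}$; here $B(M)$ is nothing but the lower-left block of $P(M)$. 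Setting $A(M):=\begin{bmatrix} B(M) & C(M)\end{bmatrix}\in\Mat_{n-s-1,s+1}(\K)$, the image $A(\calS)$ is a linear subspace of $\Mat_{n-s-1,s+1}(\K)$. Reordering the rows and the columns (separately) so as to move the central zero block to a corner turns $M$ into a block matrix of the shape $\begin{bmatrix} -A(M)^T & \ast \\ 0 & A(M)\end{bmatrix}$, whence $\rk M\geq\rk A(M)^T+\rk A(M)=2\rk A(M)$ by the elementary inequality $\rk\begin{bmatrix} X & Y \\ 0 & Z\end{bmatrix}\geq\rk X+\rk Z$. Therefore $\urk\calS\leq 2s$ forces $\urk A(\calS)\leq s$.

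Next I would bound $\dim A(\calS)$ from below. The projection $\begin{bmatrix} B & C\end{bmatrix}\mapsto B$ maps $A(\calS)$ onto $B(\calS)$, and $B(\calS)$ is in turn the image of $P(\calS)$ under the map that sends a matrix of $\WA_{n-1,s,1}(\K)$ to its lower-left block, a map whose kernel is contained in $\Mata_s(\K)$; hence $\dim A(\calS)\geq\dim B(\calS)\geq\dim P(\calS)-\binom{s}{2}$. Combining this with the assumption $\dim P(\calS)>a_{n-1,s,1}-(n-s-3)$ and the identity $a_{n-1,s,1}=\binom{s}{2}+s(n-s-1)$, one gets $\dim A(\calS)>s(n-s-1)-(n-s-3)$, which is exactly the Atkinson--Lloyd threshold $mr-(m-p+r)+1$ with $m=n-s-1$, $p=s+1$ and $r=s$ (the inequality $2s<n-1$ ensures $r\leq p\leq m$). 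Applying the classification of large spaces of bounded-rank rectangular matrices over an arbitrary field (Atkinson--Lloyd \cite{AtkLloyd}, generalized in \cite{dSPboundedrank}) to $A(\calS)$, and using $p-r=1$, we deduce that either there is a nonzero vector $X\in\K^{s+1}$ with $A(M)X=0$ for all $M\in\calS$, or else $m=p$ (that is, $n=2s+2$) and the ranges of the matrices of $A(\calS)$ all lie in a common $s$-dimensional subspace of $\K^{n-s-1}$.

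It then remains to discard the two parasitic outcomes and to conclude. If $n=2s+2$ and the second outcome holds, take a nonzero linear form $X\mapsto Z^TX$ vanishing on the common range space; then $Z^TA(M)=0$ for all $M$, and since the central block of $M$ is zero this forces the combination $\sum_iZ_i\,(\text{row }s+i\text{ of }M)$ to vanish for every $M\in\calS$, which confines the column spaces of all the $B(M)$ to a single subspace of dimension at most $n-s-2$; hence $\dim P(\calS)\leq\binom{s}{2}+s(n-s-2)$, which for $n=2s+2$ contradicts the hypothesis on $\dim P(\calS)$. Likewise, writing $X=(X_0,\lambda)\in\K^s\times\K$ in the first outcome, if $\lambda=0$ then $X_0\neq 0$ and $B(M)X_0=0$ for all $M$, so $\dim B(\calS)\leq(s-1)(n-s-1)$ and $\dim P(\calS)\leq\binom{s}{2}+(s-1)(n-s-1)=a_{n-1,s,1}-(n-s-3)-2$, again a contradiction. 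So $\lambda\neq 0$ and $C(M)=-\lambda^{-1}B(M)X_0$ for all $M$. Replacing $\calS$ by $Q^T\calS Q$, where $Q:=I_n+u\,e_n^T$ and $u\in\K^n$ has $\lambda^{-1}X_0$ as its first $s$ coordinates and zeros elsewhere, changes the last column of each matrix of $\calS$ into one whose central block equals $C(M)+\lambda^{-1}B(M)X_0=0$ while keeping the central $(n-s-1)\times(n-s-1)$ block zero; so $Q^T\calS Q\subset\WA_{n,s,1}(\K)$, which is the claim.

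I expect the decisive point to be the dimension bookkeeping. The threshold $a_{n-1,s,1}-(n-s-3)$ in the hypothesis is calibrated so that $\dim A(\calS)$ just barely clears the Atkinson--Lloyd bound, and so that the two spurious outputs of the rectangular classification---the square boundary case $n=2s+2$ and the ``$\lambda=0$'' branch---are each killed by a dimension estimate on $P(\calS)$; losing even a little in these counts would break the argument. Once the inequalities are pinned down and the Atkinson--Lloyd conclusion is correctly interpreted (a common kernel vector in the domain in the non-square case, versus a common small range space in the square case), the concluding lifting congruence is entirely routine.
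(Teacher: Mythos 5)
Your proof is correct and takes essentially the same route as the paper: the same block decomposition extracting $A(M)=\begin{bmatrix}B(M)&C(M)\end{bmatrix}$, the same rank inequality $\rk M\geq 2\rk A(M)$, the same lower bound on $\dim A(\calS)$ via the projection $P(\calS)\twoheadrightarrow B(\calS)$ with kernel inside $\Mata_s(\K)$, the same appeal to the Atkinson--Lloyd classification, the same dimension counts to eliminate the square case and the ``last coordinate zero'' case, and the same shearing congruence $Q=I_n+u\,e_n^T$ at the end. The only differences are cosmetic (making the block-triangular reordering explicit and writing the threshold computations in slightly different form).
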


\begin{proof}
For all $M \in \calS$, we split
$$M=\begin{bmatrix}
[?]_{s \times s} & -B(M)^T & [?]_{s \times 1} \\
B(M) & [0]_{(n-1-s) \times (n-1-s)} & C(M) \\
[?]_{1 \times s} & -C(M)^T & 0
\end{bmatrix} \quad \text{with $B(M) \in \Mat_{n-1-s,s}(\K)$ and $C(M) \in \K^{n-1-s}$.}$$
Let us set
$$\calT:=\Bigl\{\begin{bmatrix}
B(M) & C(M)
\end{bmatrix} \mid M \in \calS\Bigr\}.$$
The assumption $\dim P(\calS) >a_{n-1,s,1}-(n-s-3)$ leads to
$$\dim B(\calS) \geq s(n-s-1)-(n-s-4),$$
and hence
$$\dim \calT \geq s(n-s-1)-(n-s-1)+3.$$

Next, for all $M \in \calS$, we see that
$$\rk M \geq \rk \begin{bmatrix}
B(M) & C(M)
\end{bmatrix}+\rk \begin{bmatrix}
B(M) & C(M)
\end{bmatrix}^T=2 \rk \begin{bmatrix}
B(M) & C(M)
\end{bmatrix}.$$
Hence,
$$\urk \calT \leq s.$$
Moreover $n-s-1 \geq s+1$ since $2s\leq n-2$.

Thus, we can apply the Atkinson-Lloyd theorem to $\calT$:
in virtue of Theorem 1.5 of \cite{dSPboundedrankv2}, there are two possibilities.
\begin{itemize}
\item Either $n-s-1=s+1$ and there is a non-zero vector $Y \in \K^{n-s-1}$ such that $Y^T N=0$ for all $N \in \calT$:
it would follow that $Y^T B(M)=0$ for all $M \in \calS$, leading to $\dim B(\calS) \leq s(n-s-1)-s=s(n-s-1)-(n-s-1)+1$,
which contradicts a previous result.

\item Or there exists a non-zero vector $X \in \K^{s+1}$ such that $NX=0$ for all $N \in \calT$. \\
If the last entry of $X$ equals zero, we write $X=\begin{bmatrix}
Z \\
0
\end{bmatrix}$ with $Z \in \K^s \setminus \{0\}$ and we learn that $B(M)Z=0$ for all $M \in \calS$.
Again, this would lead to $\dim B(\calS) \leq s(n-s-1)-(n-s-1)$, in contradiction with our assumptions.
Hence, the last entry of $X$ is non-zero, and without loss of generality we can assume that it equals $-1$.
Then, we write $X=\begin{bmatrix}
Y \\
-1
\end{bmatrix}$ with $Y \in \K^s$.
\end{itemize}

We have found a vector $Y \in \K^s$ such that $C(M)=B(M)Y$ for all $M \in \calS$.
Setting
$$Q:=\begin{bmatrix}
I_s & [0]_{s \times (n-1-s)} & -Y \\
[0]_{(n-1-s) \times s} & I_{n-1-s} & [0]_{(n-1-s) \times 1} \\
[0]_{1 \times s} & [0]_{1 \times (n-1-s)} & 1
\end{bmatrix},$$
we deduce that
$$Q^T \calS Q \subset \WA_{n,s,1}(\K),$$
which is the claimed result.
\end{proof}

For symmetric matrices, we give two results, one for even ranks and one for odd ranks:

\begin{prop}\label{lifting2symeven}
Let $n$ and $s$ be positive integers such that $2s<n-1$.
Let $\calS$ be an affine subspace of $\Mats_n(\K)$. Assume that $\K$ has more than $2$ elements.
Let us write every matrix $M$ of $\calS$ as
$$M=\begin{bmatrix}
P(M) & [?]_{(n-1) \times 1} \\
[?]_{1 \times (n-1)} & 0
\end{bmatrix} \quad \text{with $P(M) \in \Mats_{n-1}(\K)$.}$$
Assume that $P(\calS) \subset \WS_{n-1,s,0}(\K)$, that
$\dim P(\calS)>s_{n-1,s,0}-(n-s-3)$, and that $\urk \calS \leq 2s$.
Then, $\calS$ is congruent to a subspace of $\WS_{n,s,0}(\K)$.
\end{prop}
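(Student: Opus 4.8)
The plan is to follow, essentially step for step, the proof of Proposition~\ref{lifting2alt}; the only genuinely new difficulty is a diagonal coefficient that has no analogue in the alternating case. Write every $M\in\calS$ in the block form imposed by $P(\calS)\subseteq\WS_{n-1,s,0}(\K)$,
$$M=\begin{bmatrix} A(M) & B(M)^T & D_1(M)\\ B(M) & [0]_{(n-1-s)\times(n-1-s)} & D_2(M)\\ D_1(M)^T & D_2(M)^T & 0\end{bmatrix},$$
with $A(M)\in\Mats_s(\K)$, $B(M)\in\Mat_{n-1-s,s}(\K)$, $D_1(M)\in\K^s$ and $D_2(M)\in\K^{n-1-s}$, and set $\calT:=\bigl\{\begin{bmatrix}B(M) & D_2(M)\end{bmatrix}\mid M\in\calS\bigr\}\subseteq\Mat_{n-1-s,s+1}(\K)$. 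First I would prove $\rk M\geq 2\,\rk\begin{bmatrix}B(M) & D_2(M)\end{bmatrix}$: re-indexing the rows and columns of $M$ so that those in $\lcro s+1,n-1\rcro$ come first and those in $\lcro 1,s\rcro\cup\{n\}$ come last puts $M$ in the shape $\begin{bmatrix}0 & N\\ N^T & R\end{bmatrix}$ with $N=\begin{bmatrix}B(M) & D_2(M)\end{bmatrix}$ and $R\in\Mats_{s+1}(\K)$, whence an invertible $\rho\times\rho$ submatrix of $N$ (with $\rho:=\rk N$) produces an invertible $2\rho\times2\rho$ submatrix of $M$ by a block-antitriangular determinant; thus $\urk\calT\leq s$. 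Since $P(M)$ is determined by the pair $\bigl(A(M),B(M)\bigr)$, we get $\dim P(\calS)\leq\binom{s+1}{2}+\dim B(\calS)$, hence $\dim B(\calS)>s(n-1-s)-(n-s-3)$ and therefore $\dim\calT\geq\dim B(\calS)>s(n-1-s)-(n-s-3)$; because $2s<n-1$ gives $n-1-s\geq s+1$, the right-hand side is precisely the Atkinson--Lloyd threshold for spaces of $(n-1-s)$-by-$(s+1)$ matrices of upper rank at most $s$.

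Next I would apply Theorem~1.5 of \cite{dSPboundedrankv2} to $\calT$, exactly as in the proof of Proposition~\ref{lifting2alt}: the ``common left kernel'' alternative occurs only when $n-1-s=s+1$ and is then ruled out by the lower bound on $\dim B(\calS)$, as is the subcase of the ``common right kernel'' alternative in which the kernel vector has vanishing last coordinate; so there is a vector $Y\in\K^s$ with $D_2(M)=B(M)Y$ for all $M\in\calS$. Conjugating $\calS$ by
$$Q:=\begin{bmatrix}I_s & [0]_{s\times(n-1-s)} & -Y\\ [0]_{(n-1-s)\times s} & I_{n-1-s} & [0]_{(n-1-s)\times 1}\\ [0]_{1\times s} & [0]_{1\times(n-1-s)} & 1\end{bmatrix}$$
leaves $P(\calS)$ unchanged and annihilates $D_2$, so the matrices of $\calS':=Q^T\calS Q$ take the form $\begin{bmatrix}A & B^T & D_1'\\ B & 0 & 0\\ D_1'^T & 0 & c\end{bmatrix}$, where the congruence has introduced a scalar $c=c(M')$, namely the $(n,n)$-entry of $M'$, which is a linear form on $\calS'$. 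Consequently $\calS'$ is contained in $\WS_{n,s,0}(\K)$ if and only if $c$ vanishes identically on $\calS'$.

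The main obstacle is exactly this last point. I would first show that $c(M')=0$ whenever $\rk B(M')=s$: a congruence of $M'$ by a block-diagonal matrix $\Diag(U,V)$ normalises $\begin{bmatrix}B(M') & 0\end{bmatrix}$ to $\begin{bmatrix}I_s & [0]\end{bmatrix}$, after which a Schur complement computation shows that $\rk M'$ equals $2s$ when $c(M')=0$ and $2s+1$ otherwise; since $\urk\calS'\leq 2s$, this forces $c(M')=0$. It then remains to see that the matrices of $\calS'$ with $\rk B(\cdot)=s$ affinely generate $\calS'$. For this one uses that $\dim B(\calS)>s(n-1-s)-(n-s-3)$ exceeds the Flanders-type bound $(s-1)(n-1-s)$ for affine spaces of $(n-1-s)$-by-$s$ matrices of upper rank at most $s-1$ (see \cite{dSPboundedrank}): hence $\urk B(\calS')=s$, and in fact the direction space of the affine space $B(\calS')$ already contains a rank-$s$ matrix $L$; moving along the affine lines $M'+tL$, whose $B$-part retains rank $s$ for all but finitely many $t\in\K$ since $L$ has full column rank, and using $\#\K>2$, one gets that the linear form $c$ is zero. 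Therefore $c\equiv 0$, so $\calS'\subseteq\WS_{n,s,0}(\K)$ and $\calS$ is congruent to a subspace of $\WS_{n,s,0}(\K)$. Everything apart from the treatment of the diagonal coefficient $c$ is a transcription of the alternating argument.
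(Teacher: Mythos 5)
Your decomposition, the inequality $\rk M \geq 2\,\rk\bigl[\,B(M)\ \ D_2(M)\,\bigr]$, the dimension count for $B(\calS)$, the application of the Atkinson--Lloyd theorem to $\calT$ (with both alternatives ruled out by the lower bound on $\dim B(\calS)$), and the congruence by $Q$ annihilating $D_2$ all track the paper's argument faithfully and are correct. The observation that $c(M')=0$ whenever $\rk B(M')=s$ is also right.

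The gap is in your treatment of the diagonal scalar $c$. You argue by sweeping along a line $t\mapsto M'+t\tilde M_L$ with $B(\tilde M_L)=L$ of rank $s$: the ``bad'' set $\{t\in\K : \rk(B(M')+tL)<s\}$ is indeed finite, but it is cut out by a degree-$s$ polynomial in $t$ (a single $s\times s$ minor whose leading coefficient is the corresponding nonzero minor of $L$), so it can contain up to $s$ values of $t$. To force the affine-linear function $t\mapsto c(M'+t\tilde M_L)$ to vanish identically you need at least two good values of $t$, hence $\#\K\geq s+2$. The hypothesis is only $\#\K>2$, so for $\#\K\in\{3,\dots,s+1\}$ your argument does not close.

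The paper avoids this by a pure dimension count: if $a$ (your $c$) were not identically zero, restrict to the affine subspace $\calS''$ of $\calS$ of codimension $\leq 1$ on which $a$ is a fixed nonzero constant; there $\rk B\leq s-1$ everywhere, while $\dim B(\calS'')\geq\dim B(\calS)-1>(s-1)(n-1-s)$, contradicting Flanders's theorem for affine subspaces (Theorem 6 of \cite{dSPaffpres}). No cardinality hypothesis beyond the stated one is needed. If you replace your line-sweeping step with this hyperplane-restriction argument, the proof becomes complete and essentially coincides with the paper's.
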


\begin{proof}
For all $M \in \calS$, we split
$$M=\begin{bmatrix}
[?]_{s \times s} & B(M)^T & [?]_{s \times 1} \\
B(M) & [0]_{(n-1-s) \times (n-1-s)} & C(M) \\
[?]_{1 \times s} & C(M)^T & a(M)
\end{bmatrix},$$
with $B(M) \in \Mat_{n-1-s,s}(\K)$, $C(M) \in \K^{n-1-s}$ and $a(M) \in \K$.
Then, with exactly the same proof as in the one of Proposition \ref{lifting1alt},
we find that
$$\dim B(\calS) \geq s(n-1-s)-(n-s-4)$$
and we reduce the situation to the one where $C(M)=0$ for all $M \in \calS$.
In that reduced situation, it remains to prove that $a=0$.
For all $M \in \calS$, we now have
$$\rk M \geq \rk B(M)+\rk B(M)^T+\rk a(M)$$
and hence if $\rk B(M)=s$ then $a(M)=0$.

Assume that $a$ is non-zero. Then, we have an affine subspace $\calS'$ of $\calS$, with codimension at most $1$,
on which $a$ is constant and non-zero. Then, $\urk B(\calS') \leq s-1$ and
$$\dim B(\calS') \geq \dim B(\calS)-1 \geq (s-1)(n-1-s)+1.$$
Since $n-1-s \geq s$, we find a contradiction by applying Flanders's theorem for affine subspaces
(Theorem 6 of \cite{dSPaffpres}).

Hence, $a=0$, and we conclude that $\calS \subset \WS_{n,s,0}(\K)$.
\end{proof}

\begin{prop}\label{lifting2symodd}
Let $n$ and $s$ be positive integers such that $2s+1<n-1$.
Let $\calS$ be an affine subspace of $\Mats_n(\K)$. Assume that $\# \K>2$.
Let us write every matrix $M$ of $\calS$ as
$$M=\begin{bmatrix}
P(M) & [?]_{(n-1) \times 1} \\
[?]_{1 \times (n-1)} & 0
\end{bmatrix} \quad \text{with $P(M) \in \Mats_{n-1}(\K)$.}$$
Assume that $P(\calS) \subset \WS_{n-1,s,1}(\K)$, that
$\dim P(\calS)>s_{n-1,s,1}-(n-s-5)$, and that $\urk \calS \leq 2s+1$.
Then, $\calS$ is congruent to a subspace of $\WS_{n,s,1}(\K)$.
\end{prop}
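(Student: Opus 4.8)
The plan is to adapt the arguments of Propositions~\ref{lifting2alt} and~\ref{lifting2symeven}; the extra difficulty here is the distinguished ``$t$-coordinate'' of $\WS_{n-1,s,1}(\K)$, which produces a \emph{second} scalar parameter to be killed, on top of the vector $C(M)$. I will split $\K^n=\K^s\oplus\K\oplus\K^{n-s-2}\oplus\K$ into an $s$-block, a $t$-block, a big block and an extra line, so that $P(\calS)\subseteq\WS_{n-1,s,1}(\K)$ says exactly that every $M\in\calS$ reads
$$M=\begin{bmatrix} A(M) & D(M) & B(M)^T & C_1(M) \\ D(M)^T & e(M) & [0] & c_2(M) \\ B(M) & [0] & [0] & C(M) \\ C_1(M)^T & c_2(M) & C(M)^T & 0 \end{bmatrix},$$
with $B(M)\in\Mat_{n-s-2,s}(\K)$, $C(M)\in\K^{n-s-2}$, $c_2(M),e(M)\in\K$, and the rest of the obvious types. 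I will use that, up to congruence, $\WS_{n,s,1}(\K)$ is the space of symmetric matrices carrying a fixed $(n-s-1)$-dimensional subspace $W$ into a fixed $s$-dimensional subspace $U'$ with $W\cap U'=\{0\}$; so it suffices to produce, after a congruence, such a pair $(W,U')$ respected by all of $\calS$.

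First I would run, as at the start of the proof of Proposition~\ref{lifting2symeven}, the dimension count: the kernel of $M\mapsto B(M)$ on $P(\calS)$ consists of matrices supported on the $(s+1)$-dimensional $(s\cup t)$-block, so $\dim B(\calS)\ge\dim P(\calS)-\dbinom{s+2}{2}>s(n-s-2)-(n-s-5)$, whence $\urk B(\calS)=s$ by Flanders. Then I would set $\calT:=\{[B(M)\mid C(M)]:M\in\calS\}\subseteq\Mat_{n-s-2,s+1}(\K)$; the block decomposition ``big block versus the other three blocks'' gives $\rk M\ge 2\,\rk[B(M)\mid C(M)]$, so $\urk\calT\le s$, and $\dim\calT\ge\dim B(\calS)$ while $n-s-2\ge s+1$. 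Applying the Atkinson--Lloyd theorem (Theorem~1.5 of~\cite{dSPboundedrankv2}) exactly as in Proposition~\ref{lifting2alt}, the degenerate alternatives contradict the lower bound on $\dim B(\calS)$, so there is a fixed $Z\in\K^s$ with $C(M)=B(M)Z$ for all $M$. Conjugating by the matrix that fixes $e_1,\dots,e_{n-1}$ and sends $e_n$ to $e_n-\sum_{i\le s}Z_i e_i$ replaces $\calS$ by a congruent space with $C\equiv 0$, while keeping $P(\calS)\subseteq\WS_{n-1,s,1}(\K)$, $\urk\le 2s+1$ and $\dim B(\calS)$. The cost is that the corner $a(M):=M_{n,n}$ and the entry $c_2(M)$ are now arbitrary affine-linear functions of $M$; from now on I assume $C\equiv 0$ and must dispose of $c_2$ and $a$.

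The core step: for $M$ with $\rk B(M)=s$, reorder the big block so that $s$ of its rows form an invertible matrix $B_1$, and use the $2s\times 2s$ invertible block $\begin{bmatrix}A(M)&B_1^T\\ B_1&0\end{bmatrix}$ as a pivot in the Schur complement lemma (Lemma~\ref{schurcomplementlemma}); the off-diagonal contributions collapse to $0$, leaving $\rk M=2s+\rk\begin{bmatrix}e(M)&c_2(M)\\ c_2(M)&a(M)\end{bmatrix}$. Since $\urk\calS\le 2s+1$, this forces the determinantal identity $e(M)a(M)=c_2(M)^2$ whenever $\rk B(M)=s$; as $\urk B(\calS)=s$, such matrices are Zariski-dense in $\calS$, and (over an infinite field, and with extra care over a small finite one, exploiting the slack in the dimension bound) $ea=c_2^2$ holds on all of $\calS$. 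I would then finish by cases. If $c_2\equiv 0$, then $ea\equiv 0$, and since $\#\K>2$ an affine space lying in a union of two affine hyperplanes lies in one of them, so $a\equiv 0$ or $e\equiv 0$; in the former case every $M$ sends the big block together with the extra line into the $s$-block, in the latter it sends the big block together with the $t$-block into the $s$-block, and either way I get the desired $(W,U')$. If $c_2\not\equiv 0$, then since the image of $(e,a,c_2)\colon\calS\to\K^3$ is an affine subspace of the cone $\{xy=z^2\}$, which contains no affine plane (again $\#\K>2$ is used), that image is a line or a point; a short computation yields a scalar $\lambda\neq 0$ with $e=\lambda c_2$ and $a=\lambda^{-1}c_2$, and replacing $e_n$ by $u:=e_n-\lambda^{-1}e_{s+1}$ one checks that every $M\in\calS$ carries the span of the big block and $u$ into the $s$-block, giving $(W,U')$ once more.

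The main obstacle is precisely this tandem of $c_2$ and the corner $a$: unlike in the even case, clearing $C$ by a congruence reintroduces a possibly nonzero corner, and the Schur complement only yields the quadratic relation $ea=c_2^2$ rather than a linear constraint; converting it into the correct totally singular subspace $W$ needs the case analysis above, and propagating the relation off the dense locus $\{\rk B=s\}$ — together with the elementary facts about affine hyperplanes and about the cone $xy=z^2$ — requires some care when $\K$ is a small finite field, which is where the hypothesis $\#\K>2$ and the precise form of the dimension bound get used. Checking that the pairs $(W,U')$ produced are in general position, and that everything goes through for affine (not merely linear) $\calS$, is routine.
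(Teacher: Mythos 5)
Your high-level plan coincides with the paper's until the point where $C$ has been cleared by a congruence: you find the decomposition, bound $\dim B(\calS)$, introduce $\calT=\{[B(M)\mid C(M)]\}$, observe $\urk\calT\le s$, and invoke the Atkinson--Lloyd theorem to produce the vector $Z$ and normalise $C\equiv 0$. All of that matches the paper's proof, and your Schur-complement computation showing $\rk M=2s+\rk J(M)$ whenever $\rk B(M)=s$ (with $J(M)=\begin{bmatrix}e(M)&c_2(M)\\ c_2(M)&a(M)\end{bmatrix}$) is a correct and slightly more explicit version of the paper's inequality $\rk M\ge 2\rk B(M)+\rk J(M)$.

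The genuine gap is in how you dispose of $J$. You propose to deduce $\det J\equiv 0$ on \emph{all} of $\calS$ by Zariski density of $\{\rk B(M)=s\}$, explicitly acknowledging that over an infinite field this is automatic and that over a small finite field you would need ``extra care ... exploiting the slack in the dimension bound''. But the hypothesis only requires $\#\K>2$, so $\K=\F_3$ or $\F_4$ are in scope, and the density argument simply does not apply there; the ``extra care'' is never supplied. The paper avoids this entirely: it shows $\urk J(\calS)\le 1$ directly by contradiction, considering the level set $\calS'=\{M\in\calS:\;J(M)=J(M_1)\}$ for a hypothetical $M_1$ with $\rk J(M_1)=2$, noting $\rk B<s$ on all of $\calS'$, and deriving a contradiction from the affine Flanders theorem applied to $B(\calS')$ (which has codimension at most $3$ in $B(\calS)$). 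That argument is uniform over all fields and is the piece you are missing.

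There is a second, smaller weakness in your final case analysis. When $c_2\not\equiv 0$ you assert, from the fact that the image of $(e,a,c_2)$ is a line or a point in the cone $\{xy=z^2\}$, that ``a short computation yields a scalar $\lambda\neq 0$ with $e=\lambda c_2$ and $a=\lambda^{-1}c_2$''. The image is an \emph{affine} line, which need not pass through the origin, and the displayed proportionality relations are homogeneous, so the step is not obviously correct as stated. What you actually need is the classification of affine subspaces of $\Mats_2(\K)$ with upper rank at most $1$; this is exactly Theorem~1.4 of \cite{dSPsym}, which the paper invokes directly to conclude that $J(\calS)$ has dimension at most $1$ and is congruent to a subspace of $\Vect(E_{1,1})$. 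Citing that result (or fully re-proving it, including the $c_2\equiv 0$ splitting into two hyperplanes and the affine-line-in-a-cone analysis) would close the gap; as written, the step is a sketch rather than a proof.
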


\begin{proof}
For all $M \in \calS$, we split
$$M=\begin{bmatrix}
[?]_{s \times s} & [?]_{s \times 1} & B(M)^T & [?]_{s \times 1} \\
[?]_{1 \times s} & a(M) & [0]_{1 \times (n-s-2)} & b(M) \\
B(M) & [0]_{(n-s-2) \times 1} & [0]_{(n-s-2) \times (n-s-2)} & C(M) \\
[?]_{1 \times s} & b(M) & C(M)^T & c(M)
\end{bmatrix}$$
with $B(M) \in \Mat_{n-s-2,s}(\K)$, $C(M) \in \K^{n-s-2}$, and scalars $a(M)$, $b(M)$ and $c(M)$.

Then, $n-s-2 \geq s+1$ and $\dim B(\calS) \geq s(n-s-2)-(n-s-2)+4$.
Moreover, for all $M \in \calS$, we see that
$$2s+2>\rk M \geq \rk \begin{bmatrix}
B(M) & C(M)
\end{bmatrix}+\rk \begin{bmatrix}
B(M) & C(M)
\end{bmatrix}^T$$
and hence $\rk \begin{bmatrix}
B(M) & C(M)
\end{bmatrix} \leq s$.
Then, with the same line of reasoning as in the proof of Proposition \ref{lifting2alt},
we obtain a vector $Y \in \K^s$ such that
$$\forall M \in \calS, \; C(M)=B(M)Y,$$
and then we use a congruence transformation to reduce the situation to the one where $C(M)=0$ for all $M \in \calS$.

Next, for $M \in \calS$, we set
$$J(M):=\begin{bmatrix}
a(M) & b(M) \\
b(M) & c(M)
\end{bmatrix} \in \Mats_2(\K).$$
Again, we find that for all $M \in \calS$,
$$\rk M \geq 2\rk B(M)+\rk J(M)$$
and it follows that $\rk J(M) \leq 1$ whenever $\rk B(M)=s$.

We claim that $\urk J(\calS) \leq 1$. Assume that the contrary holds and choose $M_1 \in \calS$ such that
$\rk J(M_1)=2$. Then, $\calS':=\bigl\{M \in \calS : \; J(M)=J(M_1)\bigr\}$ is an affine subspace of $\calS$ and
$$\dim \calS' \geq \dim \calS-3.$$
It follows that $B(\calS')$ is an affine subspace of $B(\calS)$ with upper-rank less than $s$
and
$$\dim B(\calS') \geq \dim B(\calS)-3 \geq s(n-s-2)-(n-s-2)+1.$$
This would contradict Flanders's theorem for affine subspaces.

We conclude that $\urk J(\calS) \leq 1$. By Theorem 1.4 of \cite{dSPsym}, it follows that $\dim J(\calS) \leq 1$, and that
either $J(\calS)$ consists of a single rank $1$ matrix
or it is congruent to $\Vect(E_{1,1})$. In any case, $J(\calS)$ is congruent to a subspace of $\Vect(E_{1,1})$,
and we conclude that $\calS$ is congruent to a subspace of $\WS_{n,s,1}(\K)$.
\end{proof}

The next lifting lemmas are to be used in the proofs of Propositions \ref{altrank2}, \ref{symrank2} and \ref{symrank3}.
This time around, we shall only deal with linear subspaces, but we will not exclude fields with two elements.

\begin{lemme}\label{lifting2altrank2}
Let $n$ be an integer such that $n\geq 3$.
Let $S$ be a linear subspace of $\Mata_n(\K)$ such that $\urk S \leq 2$.
Let us write every matrix $M$ of $S$ as
$$M=\begin{bmatrix}
P(M) & [?]_{(n-1) \times 1} \\
[?]_{1 \times (n-1)} & 0
\end{bmatrix} \quad \text{with $P(M) \in \Mata_{n-1}(\K)$.}$$
Assume that $P(S) \subset \WA_{n-1,1,1}(\K)$ and $\dim P(S)>1$.
Then, $S$ is congruent to a subspace of $\WA_{n,1,1}(\K)$.
\end{lemme}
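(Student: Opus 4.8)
The plan is to run the argument of Proposition~\ref{lifting2alt} in the special case $s=1$, where the auxiliary space of rectangular matrices has upper-rank~$1$ and is therefore governed by the structure theorem for spaces of matrices of rank at most~$1$, which requires only dimension $\geq 2$ (rather than the stronger Atkinson--Lloyd threshold used for general $s$).

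First I would make the block structure explicit. Since $P(M)\in\WA_{n-1,1,1}(\K)$, using the partition $\{1\}\sqcup\{2\}\sqcup\{3,\dots,n-1\}$ of the first $n-1$ coordinates underlying $\WA_{n-1,1,1}(\K)$, every $M\in S$ reads
$$M=\begin{bmatrix}
0 & \alpha(M) & v(M)^T & l_1(M) \\
-\alpha(M) & 0 & [0]_{1\times(n-3)} & l_2(M) \\
-v(M) & [0]_{(n-3)\times 1} & [0]_{(n-3)\times(n-3)} & w(M) \\
-l_1(M) & -l_2(M) & -w(M)^T & 0
\end{bmatrix}$$
with $\alpha(M)\in\K$, $v(M),w(M)\in\K^{n-3}$, $l_1(M),l_2(M)\in\K$, all linear in $M$. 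Re-ordering the coordinates as $\{1,n\}\sqcup\{2,\dots,n-1\}$ puts $M$ in the shape $\begin{bmatrix} X(M) & R(M) \\ -R(M)^T & [0]_{(n-2)\times(n-2)}\end{bmatrix}$, where $X(M)=\begin{bmatrix} 0 & l_1(M) \\ -l_1(M) & 0\end{bmatrix}$ and
$$R(M):=\begin{bmatrix} \alpha(M) & v(M)^T \\ -l_2(M) & -w(M)^T\end{bmatrix}\in\Mat_{2,n-2}(\K).$$
Extracting an invertible $\rho\times\rho$ submatrix of $R(M)$ with $\rho:=\rk R(M)$ and applying Lemma~\ref{schurcomplementlemma} to the resulting block-triangular $2\rho\times 2\rho$ submatrix of $M$ (exactly as in the proof of Proposition~\ref{lifting2alt}) gives $\rk M\geq 2\rk R(M)$, hence $\urk R(S)\leq 1$ since $\urk S\leq 2$. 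Moreover $R(M)=0$ forces $\alpha(M)=0$ and $v(M)=0$, hence $P(M)=0$; so $M\mapsto R(M)$ has kernel contained in that of $M\mapsto P(M)$, whence $\dim R(S)\geq\dim P(S)>1$.

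Next I would apply the classification of spaces of matrices of rank at most~$1$: being a linear subspace of $\Mat_{2,n-2}(\K)$ of upper-rank $\leq 1$ and of dimension $\geq 2$, $R(S)$ is either of the first kind (all its matrices have their columns in a fixed line $\K u$, $u\in\K^2\setminus\{0\}$) or of the second kind (all its matrices have their rows in a fixed line $\K c$, $c\in\K^{n-2}\setminus\{0\}$). The second kind is impossible, for then the first row $(\alpha(M),v(M)^T)$ of $R(M)$ would be a multiple of $c$ for every $M$, so $P(M)$ would run over a line, contradicting $\dim P(S)>1$. The first kind with $u=(0,1)$ is impossible too, since it would force $\alpha(M)=0$ and $v(M)=0$ for all $M$, i.e.\ $P(S)=\{0\}$. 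Hence $R(S)$ is of the first kind with $u=(1,\mu)$ for some $\mu\in\K$, and comparing the two rows of $R(M)$ yields
$$\forall M\in S,\qquad l_2(M)=-\mu\,\alpha(M)\quad\text{and}\quad w(M)=-\mu\,v(M).$$
Finally, with $Y:=(-\mu,0,\dots,0)^T\in\K^{n-1}$ and $Q:=\begin{bmatrix} I_{n-1} & Y \\ 0 & 1\end{bmatrix}\in\GL_n(\K)$, the congruence $M\mapsto Q^TMQ$ leaves $P(M)$ unchanged and replaces $(l_1(M),l_2(M),w(M))$ by $(l_1(M),0,0)$, so every matrix of $Q^TSQ$ has all its nonzero entries in its first row and column; that is, $Q^TSQ\subset\WA_{n,1,1}(\K)$.

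The only genuinely substantive steps are the rank drop $\urk R(S)\leq 1$ and the bookkeeping showing that $\dim P(S)>1$ rules out both degenerate cases of the rank-one dichotomy; everything else is routine block manipulation. No assumption on $\#\K$ is needed, since both Lemma~\ref{schurcomplementlemma} and the rank-one structure theorem hold over every field; and when $n=3$ the hypothesis $\dim P(S)>1$ cannot hold (as $\Mata_2(\K)$ is one-dimensional), so the statement is then vacuous and the argument above applies as stated whenever $n\geq 4$.
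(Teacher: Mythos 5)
Your argument is correct and is essentially the same as the paper's: both extract the same auxiliary space of rank-at-most-one rectangular matrices (your $R(S)\subset\Mat_{2,n-2}(\K)$, the paper's $\calT\subset\Mat_{n-2,2}(\K)$, transposes of one another up to sign), note that its dimension is at least $\dim P(S)>1$, invoke the classification of linear spaces of rank-$\leq 1$ matrices, rule out both branches of the dichotomy using $\dim P(S)>1$, and finish with the same congruence by $Q=\begin{bmatrix}I_{n-1}&Y\\0&1\end{bmatrix}$. One small wording nit: when $\rk R(M)=2$ the $2\rho\times2\rho$ submatrix $\begin{bmatrix}X'&R'\\-(R')^T&0\end{bmatrix}$ is not literally block-triangular if $l_1(M)\neq0$, but swapping its two column blocks makes it block upper-triangular with invertible diagonal blocks, so the estimate $\rk M\geq 2\rk R(M)$ goes through as you intended.
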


\begin{proof}
For all $M \in S$, we split
$$M=\begin{bmatrix}
0 & -B(M)^T & ? \\
B(M) & [0]_{(n-2) \times (n-2)} & C(M) \\
? & -C(M)^T & 0
\end{bmatrix}
\quad \text{with $B(M) \in \K^{n-2}$ and $C(M) \in \K^{n-2}$.}$$
Setting
$$\calT:=\Bigl\{
\begin{bmatrix}
B(M) & C(M)
\end{bmatrix} \mid M \in S\Bigr\},$$
we proceed as in the proof of Proposition \ref{lifting2alt} to obtain
$$\urk \calT \leq 1.$$

Yet, we note that $\dim B(S)=\dim P(S)>1$, and hence no $1$-dimensional linear subspace of $\K^{n-2}$
includes the range of every matrix of $\calT$.
By the classification of spaces of matrices with rank at most 1, we deduce that some non-zero vector of $\K^2$ annihilates all the matrices in $\calT$.
As $B(S) \neq \{0\}$, such a vector cannot belong to $\K \times \{0\}$. Hence,
with a well-chosen congruence transformation we can reduce the situation to the one where $C=0$, in which case $S \subset \WA_{n,1,1}(\K)$.
\end{proof}

\begin{lemme}\label{lifting2symrank2}
Let $n$ be an integer such that $n\geq 3$.
Let $S$ be a linear subspace of $\Mats_n(\K)$ such that $\urk S \leq 2$.
Let us write every matrix $M$ of $S$ as
$$M=\begin{bmatrix}
P(M) & [?]_{(n-1) \times 1} \\
[?]_{1 \times (n-1)} & ?
\end{bmatrix} \quad \text{with $P(M) \in \Mats_{n-1}(\K)$.}$$
Assume that $P(S) \subset \WS_{n-1,1,0}(\K)$ and $\dim P(S)>2$.
Then, $S$ is congruent to a subspace of $\WS_{n,1,0}(\K)$.
\end{lemme}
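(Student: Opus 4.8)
The plan is to imitate the proof of Proposition~\ref{lifting2alt}, with one extra step to kill the bottom-right diagonal entry, much as in Lemma~\ref{lifting2altrank2} but in the symmetric setting. Since $P(S)\subset\WS_{n-1,1,0}(\K)$, every $M\in S$ can be written
$$M=\begin{bmatrix}
\alpha(M) & \beta(M)^T & \gamma(M) \\
\beta(M) & [0]_{(n-2)\times(n-2)} & C(M) \\
\gamma(M) & C(M)^T & \delta(M)
\end{bmatrix}$$
with $\beta(M),C(M)\in\K^{n-2}$ and $\alpha(M),\gamma(M),\delta(M)\in\K$, all depending linearly on $M$. As in the proofs of Propositions~\ref{lifting2alt} and~\ref{lifting2symeven}, splitting off the central block of zeros gives $\rk M\geq 2\,\rk\begin{bmatrix}\beta(M)&C(M)\end{bmatrix}$ for all $M\in S$, so the linear subspace $\calT:=\bigl\{\begin{bmatrix}\beta(M)&C(M)\end{bmatrix}\mid M\in S\bigr\}$ of $\Mat_{n-2,2}(\K)$ satisfies $\urk\calT\leq1$. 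Moreover, the obvious linear isomorphism from $\WS_{n-1,1,0}(\K)$ onto $\K^{n-1}$ gives $\dim\beta(S)\geq\dim P(S)-1>1$; since $\beta(S)$ lies inside the sum of the column spaces of the matrices of $\calT$, that sum has dimension at least $2$.

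I would then apply the classification of linear spaces of matrices of rank at most~$1$, exactly as in the proof of Lemma~\ref{lifting2altrank2}: the column spaces of the matrices of $\calT$ do not all lie in a common line, so some nonzero vector of $\K^2$ annihilates every matrix of $\calT$; this vector cannot lie in $\K\times\{0\}$, otherwise $\beta(S)=\{0\}$, hence up to rescaling it equals $(-\mu,1)^T$ for some $\mu\in\K$, which means $C(M)=\mu\,\beta(M)$ for all $M\in S$. Conjugating $S$ by
$$Q:=\begin{bmatrix}
I_1 & [0]_{1\times(n-2)} & -\mu \\
[0]_{(n-2)\times1} & I_{n-2} & [0]_{(n-2)\times1} \\
[0]_{1\times1} & [0]_{1\times(n-2)} & 1
\end{bmatrix}\in\GL_n(\K)$$
(as in Proposition~\ref{lifting2alt}), I reduce to the case where $C=0$; note that the central zero block is unaffected, so each $M\in S$ now has the form above with $C(M)=0$.

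It remains to prove that $\delta\equiv0$, which completes the proof since then $S\subset\WS_{n,1,0}(\K)$. For $M\in S$ with $\beta(M)\neq0$, pick a coordinate $i$ with $\beta(M)_i\neq0$: the $3\times3$ submatrix of $M$ on the rows and columns of indices $1$, $i+1$ and $n$ equals $\begin{bmatrix}\alpha(M)&\beta(M)_i&\gamma(M)\\ \beta(M)_i&0&0\\ \gamma(M)&0&\delta(M)\end{bmatrix}$, whose determinant is $-(\beta(M)_i)^2\,\delta(M)$, so $\urk S\leq2$ forces $\delta(M)=0$. A short linear-algebra argument then upgrades this to $\delta\equiv0$: if $\delta(M_0)\neq0$ for some $M_0\in S$ then $\beta(M_0)=0$ by the above, and choosing $M_1\in S$ with $\beta(M_1)\neq0$ (possible since $\dim\beta(S)\geq2$) we get $\delta(M_1)=0$ while $\beta(M_0+M_1)=\beta(M_1)\neq0$, so $\delta(M_0+M_1)=0$ and therefore $\delta(M_0)=0$, a contradiction.

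I do not expect a genuine obstacle: all steps are elementary. The two points to watch are the rank inequality in the first step and the need to keep everything characteristic-free, so that $\F_2$ is not excluded (as the hypotheses do not exclude it); this is automatic, since the rank-$\leq1$ classification and the determinant computation — where $(\beta(M)_i)^2\neq0$ whenever $\beta(M)_i\neq0$, even in characteristic~$2$ — are valid over any field.
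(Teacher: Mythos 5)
Your proposal is correct and follows essentially the same route as the paper: reduce to $C=0$ via the rank-$\le 1$ classification of $\calT$, then kill the corner scalar. The only cosmetic difference is in the last step: you use a $3\times 3$ determinant plus a direct linear-algebra argument, whereas the paper deduces $\rk M\ge 3$ when $a(M)\ne 0$ and $B(M)\ne 0$ and then observes that $B$ would otherwise vanish on the affine hyperplane $a^{-1}\{1\}$, which is too small given $\dim B(S)>1$; these are minor variants of the same idea.
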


\begin{proof}
For all $M \in S$, we split
$$M=\begin{bmatrix}
? & -B(M)^T & ? \\
B(M) & [0]_{(n-2) \times (n-2)} & C(M) \\
? & -C(M)^T & a(M)
\end{bmatrix}
\quad \text{with $B(M) \in \K^{n-2}$, $C(M) \in \K^{n-2}$ and $a(M) \in \K$.}$$
Setting
$$T:=\Bigl\{
\begin{bmatrix}
B(M) & C(M)
\end{bmatrix} \mid M \in S\Bigr\},$$
we proceed as in the proof of Proposition \ref{lifting2symeven} to obtain
$$\urk \calT \leq 1.$$
Moreover, we must have
$$\dim B(S) \geq \dim P(S)-1>1.$$
Then, with the same line of reasoning as in the proof of Lemma \ref{lifting2altrank2}, we obtain
that, after a well-chosen congruence transformation, no generality is lost in assuming that $C=0$.
Then, for all $M \in S$, we see that $\rk M \geq 3$ if $a(M) \neq 0$ and $B(M) \neq 0$.
If $a \neq 0$ then $\calS':=a^{-1} \{1\}$ is an affine hyperplane of $S$ on the whole of which $B$ vanishes;
yet $\dim B(\calS') \geq \dim B(S)-1>0$, yielding a contradiction. Hence, $a=0$ and $S \subset \WS_{n,1,0}(\K)$.
\end{proof}

\begin{lemme}\label{lifting2symrank3}
Let $n$ be an integer such that $n\geq 4$.
Let $S$ be a linear subspace of $\Mats_n(\K)$ such that $\urk S \leq 3$.
Let us write every matrix $M$ of $S$ as
$$M=\begin{bmatrix}
P(M) & [?]_{(n-1) \times 1} \\
[?]_{1 \times (n-1)} & ?
\end{bmatrix} \quad \text{with $P(M) \in \Mats_{n-1}(\K)$.}$$
Assume that $P(S) \subset \WS_{n-1,1,1}(\K)$ and $\dim P(S)>5$.
Then, $S$ is congruent to a subspace of $\WS_{n,1,1}(\K)$.
\end{lemme}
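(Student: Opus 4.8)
The plan is to follow the proof of Proposition \ref{lifting2symodd} in the special case $s=1$, with two adjustments: I will use the classification of spaces of matrices of rank at most $1$ in place of the Atkinson--Lloyd theorem (this is what makes the hypothesis $\dim P(S)>5$ enough, and lets me drop any restriction on $\#\K$), and the step ``$\urk J(S)\le 1$'' will need a new argument, because at the boundary value $\dim P(S)=6$ the affine Flanders bound invoked in Proposition \ref{lifting2symodd} gives nothing.

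First I would note that the hypotheses force $n\ge 6$, since $P(S)\subseteq\WS_{n-1,1,1}(\K)$ and $\dim\WS_{n-1,1,1}(\K)=s_{n-1,1,1}=n$. Splitting every $M\in S$ into blocks of sizes $1,1,n-3,1$ exactly as in the proof of Proposition \ref{lifting2symodd} with $s=1$, I write $B(M)\in\K^{n-3}$, $C(M)\in\K^{n-3}$ and scalars $a(M),b(M),c(M)$; since the kernel of the map $P(M)\mapsto B(M)$ on $P(S)$ is contained in $\Vect(E_{1,1},E_{1,2}+E_{2,1},E_{2,2})$, we get $\dim B(S)\ge\dim P(S)-3\ge 3$. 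As in the proof of Proposition \ref{lifting2symodd}, the block pattern yields $\rk M\ge 2\rk\begin{bmatrix}B(M)&C(M)\end{bmatrix}$ for every $M\in S$, so $\calT:=\bigl\{\begin{bmatrix}B(M)&C(M)\end{bmatrix}\mid M\in S\bigr\}$ has upper-rank at most $1$ in $\Mat_{n-3,2}(\K)$. Because $\dim B(S)\ge 3>1$, no line of $\K^{n-3}$ contains the range of every matrix of $\calT$, so by the classification of spaces of rank at most $1$ some nonzero $X\in\K^2$ annihilates $\calT$; as $B(S)\ne\{0\}$, $X$ is not in $\K\times\{0\}$, and we obtain $C(M)=\lambda B(M)$ for a fixed $\lambda\in\K$. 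Replacing $S$ by $Q^TSQ$ with $Q:=I_n-\lambda E_{1,n}$, which modifies only the last row and column of each matrix (and thus leaves $P(M)$ and $B(M)$ unchanged), I reduce to the case $C(M)=0$ for all $M\in S$.

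Next set $J(M):=\begin{bmatrix}a(M)&b(M)\\b(M)&c(M)\end{bmatrix}\in\Mats_2(\K)$. As in the proof of Proposition \ref{lifting2symodd}, one has $\rk M\ge 2\rk B(M)+\rk J(M)$, so $\urk S\le 3$ implies $B(M)=0$ whenever $\rk J(M)=2$. The key point is the claim $\urk J(S)\le 1$. Assume the contrary and pick $M_1\in S$ with $\rk J(M_1)=2$; then $B(M_1)=0$, and the affine subspace $\calS':=\{M\in S:J(M)=J(M_1)\}$ has $B\equiv 0$ on it, whence $\dim\calS'\le\dim\Ker(B|_S)=\dim S-\dim B(S)$. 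Since also $\dim\calS'=\dim S-\dim J(S)$, this gives $\dim B(S)\le\dim J(S)\le 3$, so $\dim B(S)=\dim J(S)=3$ and $J(S)=\Mats_2(\K)$. Consider now $(B,J):S\to B(S)\oplus\Mats_2(\K)$: every rank-$2$ matrix of $\Mats_2(\K)=J(S)$ is of the form $J(M)$ with $B(M)=0$, and the rank-$2$ symmetric $2\times 2$ matrices span $\Mats_2(\K)$, so $\{0\}\oplus\Mats_2(\K)\subseteq\im(B,J)$; as $(B,J)$ also surjects onto $B(S)$, it is onto $B(S)\oplus\Mats_2(\K)$. In particular there is $M\in S$ with $B(M)\ne 0$ and $\rk J(M)=2$, yielding $\rk M\ge 2+2=4$, which is absurd. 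Hence $\urk J(S)\le 1$.

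To conclude, Theorem 1.4 of \cite{dSPsym} shows $\dim J(S)\le 1$ and that $J(S)$ is congruent to a subspace of $\Vect(E_{1,1})$; a congruence of $S$ acting only on the coordinates $2$ and $n$ then brings $J(S)$ into $\Vect(E_{1,1})$, and since $C=0$ and $P(M)\in\WS_{n-1,1,1}(\K)$ this congruence leaves all the previously-zero blocks zero. The resulting matrices have all their nonzero entries on the first row and column and at the $(2,2)$ spot, hence lie in $\WS_{n,1,1}(\K)$, which proves the statement. The one genuinely delicate step is ``$\urk J(S)\le 1$'' in the boundary case $\dim P(S)=6$, where $\dim B(S)$ can be as small as $3$ and the affine Flanders argument of Proposition \ref{lifting2symodd} collapses; the surjectivity argument for $(B,J)$ above replaces it and, as a bonus, works over $\F_2$ as well.
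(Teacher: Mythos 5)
Your argument is correct, and its overall skeleton (block decomposition, reduction to $C=0$ via the rank-at-most-one classification, then the study of $J$) coincides with the paper's. The one place where you diverge is the proof of the crucial claim $\urk J(S)\le 1$. The paper argues as follows: fix an index $i$ with $B(M_1)_i\neq 0$ for some $M_1\in S$, restrict to the affine hyperplane $\calS'=\{M\in S:\ B(M)_i=B(M_1)_i\}$ on which every value of $J$ has rank at most $1$, invoke Theorem 1.4 of \cite{dSPsym} to get $\dim J(\calS')\le 1$ hence $\dim J(S)\le 2$; then, assuming $J(S)$ contained a rank-$2$ matrix $F$, the level set $\calS''=\{M\in S:\ J(M)=F\}$ has codimension at most $2$, so $\dim B(\calS'')\ge \dim B(S)-2>0$, contradicting $B\equiv 0$ on $\calS''$. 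Your substitute is more self-contained for this step: from a rank-$2$ value of $J$ you observe $\Ker(J|_S)\subseteq\Ker(B|_S)$, which yields $\dim B(S)\le\dim J(S)\le 3$, and since $\dim B(S)\ge 3$ this forces $J(S)=\Mats_2(\K)$; then the span of rank-$2$ symmetric $2\times 2$ matrices being all of $\Mats_2(\K)$ makes $(B,J)$ surjective onto $B(S)\times\Mats_2(\K)$, producing an $M$ with $B(M)\neq 0$ and $\rk J(M)=2$, hence $\rk M\ge 4$, a contradiction. Both routes are valid over any field and under the hypothesis $\dim P(S)>5$; yours avoids one invocation of Theorem 1.4 of \cite{dSPsym} (you still need it, as does the paper, for the final identification of $J(S)$ with a subspace of $\Vect(E_{1,1})$), at the cost of the slightly more delicate surjectivity argument.
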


\begin{proof}
For all $M \in S$, we split
$$M=\begin{bmatrix}
? & ? & B(M)^T & ? \\
? & a(M) & [0]_{1 \times (n-3)} & b(M) \\
B(M) & [0]_{(n-3) \times 1} & [0]_{(n-3) \times (n-3)} & C(M) \\
? & b(M) & C(M)^T & c(M)
\end{bmatrix}$$
with $B(M) \in \K^{n-3}$, $C(M) \in \K^{n-3}$, and scalars $a(M)$, $b(M)$ and $c(M)$.
Once more, $T:=\Bigl\{
\begin{bmatrix}
B(M) & C(M)
\end{bmatrix} \mid M \in S\Bigr\}$
has upper-rank less than or equal to $1$.
Yet,
$$\dim B(S) \geq \dim P(S)-3>2.$$
With the same line of reasoning as in the previous lemmas, we see that no generality is lost in assuming that $C=0$.
Next, for $M \in S$, set
$$J(M):=\begin{bmatrix}
a(M) & b(M) \\
b(M) & c(M)
\end{bmatrix} \in \Mats_2(\K).$$
Once more, we see that
$$\forall M \in S, \; B(M) \neq 0 \Rightarrow \rk J(M) \leq 1.$$
Let us choose $M_1 \in S$ such that $B(M_1) \neq 0$, and then an index $i$ such that $B(M_1)_i \neq 0$.
We consider the affine hyperplane $\calS':=\{M \in S : \; B(M)_i=B(M_1)_i\}$. Then,
$J(\calS')$ is an affine subspace of $J(S)$ whose span equals $J(S)$, and every matrix in $J(\calS')$ has rank at most $1$.
Then, we know by Theorem 1.4 of \cite{dSPsym} that $\dim J(\calS') \leq 1$, and hence $\dim J(S) \leq 2$.
Assume now that $J(S)$ contains a rank $2$ matrix $F$.
Then, $\calS'':=\{M \in S : \; J(M)=F\}$ is an affine subspace of $S$
with codimension at most $2$, and $B$ vanishes everywhere on it. Yet $\dim B(\calS'') \geq \dim B(S)-2>0$,
leading to a contradiction.
Thus, $\urk J(S)\leq 1$, and hence, by Theorem 1.4 of \cite{dSPsym}, $J(S)$ is congruent to a subspace of $\Vect(E_{1,1})$.
We conclude that $S$ is congruent to a subspace of $\WS_{n,1,1}(\K)$.
\end{proof}

\subsection{Results on spaces with small upper-rank}

This very short section consists in the proofs of Propositions \ref{altrank2}, \ref{symrank2} and \ref{symrank3}.

We start with the proof of Proposition \ref{altrank2}.

The case $n \leq 3$ is vacuous.
Let $n>3$, and let $S$ be a linear subspace of $\Mata_n(\K)$ with $\urk S \leq 2$ and $\dim S>3$.

By Proposition \ref{keylemmaalternate}, $S$ is congruent to $\WA_{n,1,1}(\K)$
or there exists a linear hyperplane $H$ of $\K^n$
such that $S_H=\{0\}$. Assume that the second option holds.
Without loss of generality, we can assume that $H=\K^{n-1} \times \{0\}$.
Then, we split every matrix $M$ of $S$ up as
$$M=\begin{bmatrix}
P(M) & [?]_{(n-1) \times 1} \\
[?]_{1 \times (n-1)} & 0
\end{bmatrix} \quad \text{with $P(M) \in \Mata_{n-1}(\K)$.}$$
Obviously $\urk P(S) \leq 2$. On the other hand $\dim P(S)=\dim S>3$ since $S_H=\{0\}$.
Hence, by induction $P(S)$ is congruent to a subspace of $\WA_{n-1,1,1}(\K)$.
Then, without loss of generality we can assume that $P(S) \subset \WA_{n-1,1,1}(\K)$,
and Lemma \ref{lifting2altrank2} yields that $S$ is congruent to a subspace of $\WA_{n,1,1}(\K)$,
which completes the proof.

The proof of Proposition \ref{symrank2} is essentially similar, this time by using Proposition \ref{keylemmasymmetriceven}
and Lemma \ref{lifting2symrank2}.

Similarly, the proof of Proposition \ref{symrank3} is done by induction, with the help of Proposition \ref{keylemmasymmetricodd}
and Lemma \ref{lifting2symrank3}; indeed, the case $n=4$ is known to be vacuous by Theorem \ref{oldtheosym}.

\section{Proof of Theorem \ref{maintheoalt}}\label{altproofsection}

This section is devoted to the proof of Theorem \ref{maintheoalt}.
We shall use an induction on $n$, using similar techniques as in \cite{dSPsym}.

Let $n$ and $s$ be positive integers such that $2s<n$.

Let $S$ be a linear subspace of $\Mata_n(\K)$ such that
$$\urk S \leq 2s \quad \text{and} \quad \dim S>\max(a_{n,s-1,3},a_{n,1,2s-1}).$$
We wish to prove that $S$ is congruent to a subspace of $\WA_{n,s,1}(\K)$ or of $\WA_{n,0,2s+1}(\K)$.
If $2s=n-1$ then we directly have $S \subset \Mata_n(\K)=\WA_{n,0,2s+1}(\K)$.
In the rest of the proof, we assume that $2s<n-1$.
If $s=1$ we would have
$$\dim S>\max(a_{n,0,3},a_{n,1,1}),$$
contradicting Theorem \ref{oldtheoalt}. Therefore, $s \geq 2$.

Throughout the proof, we assume that $S$ is not congruent to a subspace of $\WA_{n,s,1}(\K)$.
We denote by $m$ the minimal dimension of $S_H$ when $H$ ranges over the linear hyperplanes
of $\K^n$. By Lemma \ref{keylemmaalternate}, we have
$$m\leq s-1.$$
Without loss of generality, we can assume that for $H:=\K^{n-1} \times \{0\}$ we have
$$m=\dim S_H.$$
Throughout the proof, we shall split every matrix $M \in S$ along the following pattern:
$$M=\begin{bmatrix}
P(M) & [?]_{(n-1) \times 1} \\
[?]_{1 \times (n-1)} & 0
\end{bmatrix} \quad \text{with $P(M) \in \Mata_{n-1}(\K)$.}$$
Note that
$$\dim P(S)=\dim S-\dim S_H = \dim S-m.$$
In particular, noting that
$$a_{n-1,s,1} = a_{n,s-1,3}+(n-s-3)-s$$
and that $m \leq s$, we deduce from $\dim S>a_{n,s-1,3}$ that
$$\dim P(S) > a_{n-1,s,1}-(n-s-3).$$
By Proposition \ref{lifting2alt}, if $P(S)$ were congruent to a subspace of
$\WA_{n-1,s,1}(\K)$, then $S$ would be congruent to a subspace of $\WA_{n,s,1}(\K)$,
contradicting one of our first assumptions.
Therefore:
\begin{center}
$P(S)$ is not congruent to a subspace of $\WA_{n-1,s,1}(\K)$.
\end{center}
From there, we split the discussion into two main subcases, whether $m=0$ or $m>0$.

\subsection{Case 1: $m=0$}

As $S_H=\{0\}$ we have
$$\dim P(S)=\dim S.$$
Obviously, $\urk P(S) \leq 2s$.

Since $a_{n,s-1,2} \geq a_{n-1,s-1,2}$ and $a_{n,1,2s-1} \geq a_{n-1,1,2s-1}$, we know by induction that
$P(S)$ is congruent to a subspace of $\WA_{n-1,s,1}(\K)$ or to a subspace of $\WA_{n-1,0,2s+1}(\K)$.
Hence, $P(S)$ is actually congruent to a subspace of $\WA_{n-1,0,2s+1}(\K)$,
and without loss of generality we can assume that $P(S) \subset \WA_{n-1,0,2s+1}(\K)$.

Since $n \geq 2s+2$, we find
$$\dim P(S)>a_{n,1,2s-1} = \dbinom{2s-1}{2}+(n-1) \geq \dbinom{2s}{2}+2.$$
Hence, Proposition \ref{lifting1alt} shows that $S$ is congruent to a subspace of
$\WA_{n,0,2s+1}(\K)$.

\subsection{Case 2: $m>0$}

Remember that $s \geq 2$.
Without loss of generality, we can assume that $S_H$ contains $E_{1,n}-E_{n,1}$.
We split every matrix $M$ of $S$ as
$$M=\begin{bmatrix}
0 & [?]_{1 \times (n-2)} & ? \\
[?]_{(n-2) \times 1} & K(M) & [?]_{(n-2) \times 1} \\
? & [?]_{1 \times (n-2)} & 0
\end{bmatrix}$$
with $K(M) \in \Mata_{n-2}(\K)$.
By the extraction lemma (Lemma \ref{extractioncoralt}), we find that
$$\urk K(M) \leq 2s-2.$$
On the other hand, by the rank theorem we find that
$$\dim K(S) \geq \dim S-(n-2)-\dim S_H = \dim S-(n-2)-m.$$

\begin{claim}
If $s>2$ then $\dim K(S)>\max\bigl(a_{n-2,s-2,3},a_{n-2,1,2s-3}\bigr)$.
\end{claim}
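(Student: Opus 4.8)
The claim is a purely numerical estimate: starting from the hypotheses $\dim S>\max(a_{n,s-1,3},a_{n,1,2s-1})$, the bound $m\leq s-1$, and $\dim K(S)\geq \dim S-(n-2)-m$, we must show $\dim K(S)>\max(a_{n-2,s-2,3},a_{n-2,1,2s-3})$ when $s>2$. The approach is to handle the two terms of the maximum on the right separately, comparing each with the corresponding term on the left after subtracting $(n-2)+m\leq (n-2)+(s-1)=n+s-3$.

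**First reduction: the $a_{n-2,s-2,3}$ term.** I would compute $a_{n,s-1,3}-a_{n-2,s-2,3}$ explicitly from the formula $a_{n,s,t}=\binom{s}{2}+\binom{t}{2}+s(n-s)$. With $t=3$ fixed, this difference is $\bigl[\binom{s-1}{2}+(s-1)(n-s+1)\bigr]-\bigl[\binom{s-2}{2}+(s-2)(n-s)\bigr]$, which simplifies to a linear expression in $n$ and $s$; a short calculation gives $a_{n,s-1,3}-a_{n-2,s-2,3}=(n-s+1)+(s-2)\cdot 2+(s-2)=\dots$ — in any case it equals $n+s-2$ (I would carry out this one-line arithmetic carefully). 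Since $\dim S>a_{n,s-1,3}$ and $\dim K(S)\geq \dim S-(n+s-3)$, we get $\dim K(S)\geq \dim S-(n+s-3)>a_{n,s-1,3}-(n+s-3)=a_{n-2,s-2,3}+1>a_{n-2,s-2,3}$, as desired. The role of $s>2$ here is simply to ensure that $a_{n-2,s-2,3}$ is a legitimate quantity (i.e.\ $s-2\geq 1$, or at least $s-2\geq 0$) and that the convexity/positivity facts used in forming $\binom{s-2}{2}$ make sense.

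**Second reduction: the $a_{n-2,1,2s-3}$ term.** This one is analogous but I must compare against $a_{n,1,2s-1}$, the other term in the left-hand maximum. Using the same formula with $s$-slot equal to $1$ and $t$-slot equal to $2s-1$ versus $2s-3$, I would compute $a_{n,1,2s-1}-a_{n-2,1,2s-3}=\bigl[\binom{2s-1}{2}+(n-1)\bigr]-\bigl[\binom{2s-3}{2}+(n-3)\bigr]=\binom{2s-1}{2}-\binom{2s-3}{2}+2=(2s-2)+(2s-3)+2=4s-3$. So I need $\dim K(S)\geq \dim S-(n+s-3)$ to exceed $a_{n,1,2s-1}-(n+s-3)$, and this holds provided $4s-3\leq n+s-3$, i.e.\ $3s\leq n$, i.e.\ $n\geq 3s$ — which is \emph{not} guaranteed. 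So the naive bound is insufficient here, and this is the main obstacle.

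**Handling the obstacle.** The resolution is that I do not need to beat $a_{n-2,1,2s-3}$ by subtracting from $a_{n,1,2s-1}$; instead I should use $\dim S>a_{n,s-1,3}$ (the \emph{other} hypothesis) to beat $a_{n-2,1,2s-3}$. Concretely, by strict convexity of the sequence $(a_{n,k,2s+1-2k})_{0\leq k\leq s}$ — recalled in the introduction — one has, for $s\geq 2$, the inequality $a_{n-2,1,2s-3}\leq$ (something controlled by $a_{n-2,s-2,3}$ plus a small correction), or more directly one compares $a_{n-2,1,2s-3}$ with $a_{n-2,s-2,3}$ using that both are values of a strictly convex sequence with the same endpoints' parity, so the maximum of the two equals one of the two extreme-ish values; in the regime $s>2$ and $2s<n-1$ the term $a_{n-2,s-2,3}$ dominates (for $s-2$ and $1$ both away from the boundary when $s$ is moderate, and a direct check when $s=3$). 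Thus $\max(a_{n-2,s-2,3},a_{n-2,1,2s-3})=a_{n-2,s-2,3}$ already under our standing hypotheses, and the claim reduces entirely to the first reduction above. I would organize the write-up as: (1) state $\dim K(S)\geq \dim S-(n+s-3)$; (2) a one-line convexity argument that the relevant right-hand maximum is $a_{n-2,s-2,3}$; (3) the arithmetic identity $a_{n,s-1,3}-a_{n-2,s-2,3}=n+s-2$ and conclude. The genuinely delicate point — the thing to get right — is verifying that convexity indeed forces $a_{n-2,s-2,3}\geq a_{n-2,1,2s-3}$ in exactly the range $s>2$, $2s<n-1$, since at $s=3$ the two indices $s-2=1$ and $1$ coincide and the inequality is an equality, while for $s\geq 4$ strict convexity does the job.
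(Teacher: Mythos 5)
Your first reduction is sound and matches the paper's first step (though the identity is $a_{n,s-1,3}-a_{n-2,s-2,3}=(n-2)+(s-1)=n+s-3$, not $n+s-2$; the conclusion $\dim K(S)>a_{n-2,s-2,3}$ still follows since $\dim S>a_{n,s-1,3}$ is strict). The trouble is in the proposed resolution of the obstacle, and it is a real gap, not a presentational one.

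First, there is a sign error in the second reduction: you want $a_{n,1,2s-1}-(n+s-3)\geq a_{n-2,1,2s-3}$, which (since $a_{n,1,2s-1}-a_{n-2,1,2s-3}=4s-3$) is the inequality $4s-3\geq n+s-3$, i.e.\ $n\leq 3s$, not $n\geq 3s$. So the naive bound on the second term \emph{does} work precisely when $n\leq 3s$.

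Second, and more importantly, the claim at the heart of your resolution --- that $\max(a_{n-2,s-2,3},a_{n-2,1,2s-3})=a_{n-2,s-2,3}$ whenever $s>2$ and $2s<n-1$ --- is false. Strict convexity of $k\mapsto a_{n-2,k,2s-1-2k}$ only tells you the values at $1$ and $s-2$ lie on a parabola opening upward; it does not tell you which of the two is larger, since that depends on the position of the vertex, which varies with $n$. A concrete counterexample: take $s=4$, $n=10$ (so $2s=8<9=n-1$). Then $a_{n-2,1,2s-3}=a_{8,1,5}=\binom{5}{2}+7=17$, while $a_{n-2,s-2,3}=a_{8,2,3}=\binom{2}{2}+\binom{3}{2}+2\cdot 6=16$. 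Here the maximum is the \emph{other} term, so ``the claim reduces entirely to the first reduction'' is false. (What saves the day in this example is that $n\leq 3s$, so the naive bound covers the second term --- exactly the case your sign error ruled out.)

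The pieces can in fact be assembled into a correct argument along your lines: when $n\leq 3s$ the naive bound handles $a_{n-2,1,2s-3}$, and when $n>3s$ a direct calculation with the vertex of the parabola (not mere convexity) does show $a_{n-2,s-2,3}\geq a_{n-2,1,2s-3}$. But the paper takes a cleaner route: it assumes $\dim K(S)\leq a_{n-2,1,2s-3}$ for contradiction, observes that this forces simultaneously $a_{n-2,s-2,3}<a_{n-2,1,2s-3}$ (from the first reduction) and $a_{n-2,1,2s-3}>a_{n,1,2s-1}-(n-2)-(s-1)$ (from the hypothesis on $\dim S$), and then shows these two inequalities are incompatible for $s\geq 3$ by reducing them to $n(s-3)\leq\tfrac{5}{2}s^2-\tfrac{13}{2}s-4$ and $n\geq 3s+1$, whose conjunction forces $\tfrac{s^2-3s}{2}\leq -1$. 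This avoids any case split on $n$ versus $3s$ and any parabola-vertex analysis. As written, your proposal does not constitute a proof.
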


\begin{proof}
Assume that $s>2$.
One checks that
$$a_{n,s-1,3}=a_{n-2,s-2,3}+(n-2)+(s-1),$$
and hence $\dim K(S)>a_{n-2,s-2,3}$ follows from the assumption that $\dim S >a_{n,s-1,3}$ and that
$m \leq s-1$.

In the rest of the proof, we assume that $\dim K(S) \leq a_{n-2,1,2s-3}$ and we show that it leads to a contradiction.
First of all, we must have
\begin{equation}\label{ineq1}
a_{n-2,s-2,3} < a_{n-2,1,2s-3}.
\end{equation}

On the other hand, since $\dim S>a_{n,1,2s-1}$ we must have
\begin{equation}\label{ineq2}
a_{n-2,1,2s-3} > a_{n,1,2s-1}-(n-2)-m \geq a_{n,1,2s-1}-(n-2)-(s-1).
\end{equation}
Now, we prove that \eqref{ineq1} and \eqref{ineq2} are contradictory.
Indeed, by a straightforward computation, one checks that \eqref{ineq1} leads to
$$n(s-3) \leq \frac{5}{2}\, s^2-\frac{13}{2}\,s-4,$$
whereas \eqref{ineq2} leads to
$$3s+1 \leq n.$$
Since $s \geq 3$, combining those two inequalities
leads to
$$(3s+1)(s-3) \leq \frac{5}{2}\, s^2-\frac{13}{2}\,s-4,$$
which further leads to $\frac{s^2-3s}{2} \leq -1$ and contradicts the fact that $s \geq 3$.
\end{proof}

\begin{claim}
If $s=2$ then $\dim K(S)>3$.
\end{claim}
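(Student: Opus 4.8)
The plan is to observe that, when $s=2$, the two quantities in the standing hypothesis $\dim S>\max(a_{n,1,2s-1},a_{n,s-1,3})$ collapse into a single one, and then to feed the resulting lower bound for $\dim S$ into the rank inequality $\dim K(S)\geq \dim S-(n-2)-m$ that was established just before the claim.

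First I would note that $a_{n,s-1,3}=a_{n,1,3}$ trivially, while $a_{n,1,2s-1}=a_{n,1,3}$ since $2s-1=3$; hence the assumption on $\dim S$ reads simply $\dim S>a_{n,1,3}$. A direct computation gives $a_{n,1,3}=\binom{1}{2}+\binom{3}{2}+(n-1)=n+2$, so that $\dim S\geq n+3$.

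Next I would use that we are in Case 2, so $m>0$, together with the inequality $m\leq s-1=1$ provided by Proposition \ref{keylemmaalternate}; this forces $m=1$. Plugging $\dim S\geq n+3$ and $m=1$ into $\dim K(S)\geq \dim S-(n-2)-m$ then gives
$$\dim K(S)\geq (n+3)-(n-2)-1=4>3,$$
which is the desired conclusion.

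I do not expect any real obstacle here: in contrast with the case $s>2$ handled in the previous claim, no convexity estimate or case analysis is needed, because for $s=2$ the two terms in the maximum degenerate to one and the bound becomes immediate.
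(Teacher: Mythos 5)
Your argument is correct and follows essentially the same route as the paper: collapse the assumption on $\dim S$ to $\dim S>a_{n,1,3}=n+2$, combine this with the rank-theorem bound $\dim K(S)\geq\dim S-(n-2)-m$ and $m\leq s-1=1$. The paper states it slightly more tersely without noting explicitly that the two terms in the max coincide, but the content is identical.
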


\begin{proof}
Assume that $s=2$. Then, $\dim S>a_{n,1,2s-1}$ reads $\dim S>(n-1)+3$.
Hence,
$$\dim K(S) \geq \dim S-(n-2)-m \geq \dim S-(n-1)>3.$$
\end{proof}

Combining the above two claims, we obtain (by induction if $s>2$, and by Proposition \ref{altrank2} otherwise)
that $K(S)$ is congruent to a subspace of $\WA_{n-2,s-1,1}(\K)$ or to a subspace of $\WA_{n-2,0,2s-1}(\K)$.
However, the first case would lead to $P(S)$ being congruent to a subspace of $\WA_{n-1,s,1}(\K)$, which has been ruled
out from the start.

It follows that no generality is lost in assuming that $K(S) \subset \WA_{n-2,0,2s-1}(\K)$.
We shall prove that this leads to a contradiction.
Setting $H':=\K^{n-2} \times \{0\} \times \K$, we see that $S_{H'} \subset \Vect(E_{1,n-1}-E_{n-1,1},E_{n,n-1}-E_{n-1,n})$,
which leads to $m \leq 2$.
Then,
\begin{multline*}
a_{n,1,2s-1}\leq \dim S -1\leq (n-2)+m+\dim K(S)-1 \\
\leq (n-2)+m+a_{n-2,0,2s-1}-1 \leq (n-1)+a_{n-2,0,2s-1}=a_{n,1,2s-1}.
\end{multline*}
Hence, all the intermediate inequalities turn out to be equalities, which yields:
\begin{enumerate}[(a)]
\item $m=2$;
\item $K(S)=\WA_{n-2,0,2s-1}(\K)$;
\item $P(S)$ contains $E_{1,j}-E_{j,1}$ for all $j \in \lcro 2,n-1\rcro$.
\end{enumerate}
In particular, as $m=2$ and $S_{H'} \subset \Vect(E_{1,n-1}-E_{n-1,1},E_{n,n-1}-E_{n-1,n})$
we gather that $S$ contains $E_{n-1,n}-E_{n,n-1}$.
On the other hand, by using properties (b) and (c) above, we get that for all
$A \in \Mata_{2s}(\K)$, the space $S$ contains a matrix of the form
$$\begin{bmatrix}
A & [?]_{2s \times (n-2s-1)} & [?]_{2s \times 1} \\
[?]_{(n-2s-1) \times 2s} & [0]_{(n-2s-1) \times (n-2s-1)} & [?]_{(n-2s-1) \times 1} \\
[?]_{1 \times 2s} & [?]_{1 \times (n-2s-1)} & 0
\end{bmatrix}.$$
Then, by the extraction lemma, we gather that $\rk A \leq 2s-2$ for all $A \in \Mata_{2s}(\K)$,
which is absurd.

This completes the proof of Theorem \ref{maintheoalt}.

\section{Proof of Theorem \ref{maintheosym}}\label{symproofsection}

This section is devoted to the proof of Theorem \ref{maintheosym}.
The strategy is essentially similar to the one of the proof of Theorem \ref{maintheoalt},
but with additional complexity due to the provision $r<n-1$ in Propositions \ref{keylemmasymmetriceven} and \ref{keylemmasymmetricodd} and
to the need of distinguishing between the even case and the odd case.

We proceed by induction on $n$. Throughout the section, we assume that $\K$ has more than $2$ elements.

Let $n,r$ be positive integers such that $n>r$, and set
$$s:=\left \lfloor \frac{r}{2}\right \rfloor \quad \text{and} \quad \epsilon:=r-2s.$$
Let $S$ be a linear subspace of $\Mats_n(\K)$ such that
$$\dim S>\max\left(s_{n,s-1,\epsilon+2}, s_{n,1,r-2}\right) \quad \text{and} \quad \urk S \leq r.$$
In particular, by Theorem \ref{oldtheosym} we must have $s \geq 2$.

Assume for a moment that the following condition is satisfied:
\begin{itemize}
\item[(H1)]
$\K$ has characteristic $2$ and $S$ is included in $\Mata_n(\K)$.
\end{itemize}
Then, the upper-rank of $S$ would read $2s'$ for some integer $s'$ such that $s' \leq s$;
noting that $s_{n,s-1,\epsilon+2} \geq a_{n,s'-1,1}$ and $s_{n,1,r-2} \geq a_{n,1,2s'-1}$,
we would deduce from Theorem \ref{maintheoalt} that $S$ is congruent to a subspace of $\WA_{n,s',1}(\K)$
or to a subspace of $\WA_{n,0,2s'+1}(\K)$. In the first case, $S$ would be congruent to a
subspace of $\WS_{n,s,\epsilon}(\K)$,
and in the second one it would be congruent to a subspace of $\WA_{n,0,r+1}(\K)$.
Moreover, if $r$ were odd and $S$ were congruent to a subspace of $\WA_{n,0,r+1}(\K)$, then Theorem \ref{oldtheoalt} would yield
$$\dim S \leq \dbinom{r}{2},$$
which would contradict our assumptions because
$$s_{n,1,r-2}=\dbinom{r-1}{2}+n \geq \dbinom{r-1}{2}+(r-1) = \dbinom{r}{2}.$$
Hence, in any case, we would obtain one of the desired outcomes.

In the rest of the proof, we assume that condition (H1) does not hold.
We also make the following additional assumption:
\begin{itemize}
\item[(H2)] The space $S$ is not congruent to a subspace of $\WS_{n,s,\epsilon}(\K)$.
\end{itemize}

By Propositions \ref{keylemmasymmetricsubfull}, \ref{keylemmasymmetriceven} and \ref{keylemmasymmetricodd} we can
find an $S$-adapted linear hyperplane $H$ with minimal dimension $m$, and we know that
$$m \leq s-1 \quad \text{or} \quad \begin{cases}
r=n-1 \\
m \leq n-3.
\end{cases}$$

Without loss of generality, we can also assume that the hyperplane $H:=\K^{n-1} \times \{0\}$ is $S$-adapted and satisfies
$$m=\dim S_H.$$

Throughout the proof, we shall split every matrix $M \in S$ up as
$$M=\begin{bmatrix}
P(M) & [?]_{(n-1) \times 1} \\
[?]_{1 \times (n-1)} & ?
\end{bmatrix} \quad \text{with $P(M) \in \Mats_{n-1}(\K)$.}$$
Note that
$$\dim P(S)=\dim S-\dim S_H = \dim S-m.$$
Moreover, since $H$ is $S$-adapted some matrix of $P(S)$ has its attached quadratic form non-zero in the event when $\K$ has characteristic $2$.

Assume for the moment that $r<n-1$.
By noting that
$$s_{n-1,s,0}=s_{n,s,0}-s=s_{n,s-1,2}+(n-s-3)-(s-1),$$
we deduce from $\dim S>s_{n,s-1,2+\epsilon}$ and $m \leq s-1$ that
$$\dim P(S) > s_{n-1,s,0}-(n-s+3) \quad \text{if $r$ is even.}$$
On the other hand, by noting that
$$s_{n-1,s,1}=s_{n,s,1}-s=s_{n,s-1,3}+(n-s-5)-(s-1)$$
we obtain that
$$\dim P(S) > s_{n-1,s,1}-(n-s-5) \quad \text{if $r$ is odd.}$$
If $P(S)$ were congruent to a subspace of $\WS_{n-1,s,\epsilon}(\K)$,
then by one of Propositions \ref{lifting2symeven} or \ref{lifting2symodd} we would find that
$S$ is congruent to a subspace of $\WS_{n,s,\epsilon}(\K)$, thereby contradicting (H2).
It follows that:

\begin{center}
If $r<n-1$ then $P(S)$ is not congruent to a subspace of $\WS_{n-1,s,\epsilon}(\K)$.
\end{center}

From there, we split the discussion into three subcases, whether $m=0$, $m>0$ and $r<n-1$, or
$m>0$ and $r=n-1$.

\subsection{Case 1: $m=0$}

As $S_H=\{0\}$ we have
$$\dim P(S)=\dim S.$$
Obviously, $\urk P(S) \leq r$, and since $H$ is $S$-adapted
the space $P(S)$ is not included in $\Mata_{n-1}(\K)$.

Since $s_{n,s-1,2+\epsilon} \geq s_{n-1,s-1,2+\epsilon}$ and $s_{n,1,r-2} \geq s_{n-1,1,r-2}$, we know that
one of the following conditions holds:
\begin{enumerate}
\item[(i)] $P(S)$ is congruent to a subspace of $\WS_{n-1,s,\epsilon}(\K)$;
\item[or] ${}$
\item[(ii)] $P(S)$ is congruent to a subspace of $\WS_{n-1,0,r}(\K)$.
\end{enumerate}
Indeed, this is given by induction if $r<n-1$, otherwise (ii) is obviously true.

Let us discard the first option.
First of all, we already know that it cannot occur if $r<n-1$. Assume now that $r=n-1$ and that (i) holds.
Then, we would have
$$\dim S=\dim P(S) \leq s_{n-1,s,\epsilon}$$
and hence
$$s_{n,1,n-3} +1 \leq s_{n-1,s,\epsilon.}$$
One checks that this would lead to $s^2-3s+6 \leq 0$ if $\epsilon=0$ and $s^2-s+4 \leq 0$ otherwise,
which is false.

Therefore, without loss of generality, we can now assume that $P(S) \subset \WS_{n-1,0,r}(\K)$.
Since $n \geq r+1$, we have $\dim P(S)>s_{n,1,r-2} \geq \dbinom{r}{2}+2$. Hence,
Proposition \ref{lifting1sym} applies to $S$, and we conclude that
$S$ is congruent to a subspace of $\WS_{n,0,r}(\K)$.

\subsection{Case 2: $r<n-1$ and $m>0$}

Then, we know that $m \leq s-1$.
Remember that $S_H$ contains no rank $1$ matrix. Consider an arbitrary rank $2$ matrix $N$ in $S_H$,
with kernel denoted by $G$. Note that $G$ is a linear hyperplane of $H$.
Assume that $\K$ has characteristic $2$.
Since $\# \K>2$, if a quadratic form $q$ on $H$ vanishes everywhere
on $H \setminus G$ then it is zero (indeed, by taking a linear form $\varphi$ on $H$ with kernel $G$, we would see that $x \mapsto q(x)\varphi(x)$,
which is a homogeneous polynomial of degree $3$, would vanish everywhere on $H$, leading to $q=0$ since $\varphi \neq 0)$.
Since $H$ is $S$-adapted we deduce that there exists a matrix
$M \in S$ and a non-zero vector $X \in H \setminus G$ such that $X^T M X \neq 0$.
Thus, by applying a congruence transformation, we see that no generality is lost in assuming that
$X$ is the first vector of the standard basis and that $G=\{0\} \times \K^{n-2} \times \{0\}$.

Hence, in the rest of the proof, we assume the following:
\begin{itemize}
\item The space $S$ contains $E_{1,n}+E_{n,1}+a\,E_{n,n}$ for some $a \in \K$.

\item If $\K$ has characteristic $2$ then some matrix of $S$ has a non-zero entry at the $(1,1)$-spot.
\end{itemize}

Next, we further split every matrix $M$ of $S$ as
$$M=\begin{bmatrix}
? & [?]_{1 \times (n-2)} & ? \\
[?]_{(n-2) \times 1} & K(M) & [?]_{(n-2) \times 1} \\
? & [?]_{1 \times (n-2)} & ?
\end{bmatrix} \quad \text{with $K(M) \in \Mats_{n-2}(\K)$.}$$

As $\# \K>2$, the extraction lemma (Lemma \ref{extractioncorsym}) leads to
$$\urk K(S) \leq r-2.$$
On the other hand, by the rank theorem we find that
$$\dim K(S) \geq \dim S-(n-1)-\dim S_H = \dim S-(n-1)-m.$$

\begin{claim}\label{claim3}
If $s>2$ then $\dim K(S)>\max(s_{n-2,s-2,\epsilon+2},s_{n-2,1,r-4})$.
\end{claim}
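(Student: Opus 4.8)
The plan is to establish the two strict inequalities $\dim K(S)>s_{n-2,s-2,\epsilon+2}$ and $\dim K(S)>s_{n-2,1,r-4}$ separately. In both cases the only facts about $S$ that will be used are the bound $\dim K(S)\geq \dim S-(n-1)-m$ obtained just above, the inequality $m\leq s-1$ (valid here because $r<n-1$), and the two standing hypotheses $\dim S>s_{n,s-1,\epsilon+2}$ and $\dim S>s_{n,1,r-2}$.

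For the first inequality I would first record the identity
$$s_{n,s-1,\epsilon+2}=s_{n-2,s-2,\epsilon+2}+(n-1)+(s-1),$$
which is a routine consequence of the closed formula $s_{n,s,t}=\binom{s+1}{2}+\binom{t+1}{2}+s(n-s)$: the $\binom{\epsilon+3}{2}$ contributions cancel, $\binom{s}{2}-\binom{s-1}{2}=s-1$, and $(s-1)(n-s+1)-(s-2)(n-s)=n-1$. Then
$$\dim K(S)\geq \dim S-(n-1)-(s-1)>s_{n,s-1,\epsilon+2}-(n-1)-(s-1)=s_{n-2,s-2,\epsilon+2},$$
which is exactly what is needed.

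For the second inequality I would argue by contradiction, assuming $\dim K(S)\leq s_{n-2,1,r-4}$. Combining this with the inequality just proved yields $s_{n-2,s-2,\epsilon+2}<s_{n-2,1,r-4}$, while $\dim K(S)\geq\dim S-(n-1)-(s-1)>s_{n,1,r-2}-(n-1)-(s-1)$ yields $s_{n-2,1,r-4}>s_{n,1,r-2}-(n-1)-(s-1)$. I would then convert both into bounds on $n$ by expanding the relevant values (using $s_{n,1,t}=n+\binom{t+1}{2}$ and $r=2s+\epsilon$). The second inequality should simplify to $n>3s+2\epsilon-1$. The first should simplify to $(s-3)\,n<(s-3)\bigl(\tfrac{5}{2}s+2\epsilon\bigr)$: for $s=3$ this reads $0<0$, already absurd, and for $s\geq 4$ it reads $n<\tfrac{5}{2}s+2\epsilon$. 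Combining the two bounds on $n$ when $s\geq 4$ forces $3s-1<\tfrac{5}{2}s$, i.e.\ $s<2$, contradicting $s\geq 3$; and when $s=3$ the contradiction is already in hand. Either way the assumption $\dim K(S)\leq s_{n-2,1,r-4}$ is untenable, which proves the claim.

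I expect the only mildly delicate point to be carrying the parameter $\epsilon\in\{0,1\}$ through the binomial expansions and checking that the quadratic in $s$ produced by $s_{n-2,s-2,\epsilon+2}<s_{n-2,1,r-4}$ factors cleanly with $s-3$; that is precisely what legitimizes the division by $s-3$ in the case $s\geq 4$ and makes the boundary case $s=3$ collapse to $0<0$. Everything else is a direct substitution of $r=2s+\epsilon$ together with elementary algebra, of the same flavour as the corresponding claim in the proof of Theorem~\ref{maintheoalt}.
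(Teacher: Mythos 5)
Your proposal is correct and follows essentially the same route as the paper's own proof: the same identity $s_{n,s-1,\epsilon+2}=s_{n-2,s-2,\epsilon+2}+(n-1)+(s-1)$, the same pair of inequalities derived from $\dim S>s_{n,s-1,\epsilon+2}$ and $\dim S>s_{n,1,r-2}$ together with $m\leq s-1$, and the same contradiction-by-arithmetic at the end. The only cosmetic difference is in the final step: you factor out $(s-3)$ from the quadratic inequality and divide by it when $s\geq 4$ (handling $s=3$ as the collapse $0<0$), whereas the paper substitutes the lower bound $n\geq 3s+2\epsilon$ into the quadratic inequality directly to get $\frac{s(s-3)}{2}\leq -1$; both manipulations are equivalent.
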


\begin{proof}
Assume that $s>2$.
One checks that
$$s_{n,s-1,\epsilon+2}=s_{n-2,s-2,\epsilon+2}+(n-1)+(s-1),$$
and hence inequality $\dim K(S)>s_{n-2,s-2,\epsilon+2}$ follows from the assumption that $\dim S >s_{n,s-1,\epsilon+2}$ and that
$m \leq s-1$.

In the rest of the proof, we assume that $\dim K(S) \leq s_{n-2,1,r-4}$ and we show that it leads to a contradiction.
First of all, we must have
\begin{equation}\label{ineq3}
s_{n-2,s-2,\epsilon+2} < s_{n-2,1,r-4}.
\end{equation}

On the other hand, since $\dim S>s_{n,1,r-2}$ we must have
\begin{equation}\label{ineq4}
s_{n-2,1,r-4} > s_{n,1,r-2}-(n-1)-m \geq s_{n,1,r-2}-(n-1)-(s-1).
\end{equation}
Now, we prove that \eqref{ineq3} and \eqref{ineq4} are contradictory.
First of all, inequality \eqref{ineq4} leads to
$n \geq 2r-s$, and hence
$$n \geq 3s+2\epsilon.$$

On the other hand, inequality \eqref{ineq3} leads to
$$\begin{cases}
\frac{5}{2} s^2-\frac{15}{2} s-1 \geq n(s-3) & \text{if $r$ is even,} \\
\frac{5}{2} s^2-\frac{11}{2} s-7 \geq n(s-3) & \text{if $r$ is odd.}
\end{cases}$$
Since $s \geq 3$, combining these two sets of inequalities yields, in any case:
$$\frac{s(s-3)}{2} \leq -1.$$
This contradicts the assumption that $s \geq 3$.
\end{proof}

\begin{claim}
Either the space $K(S)$ is congruent to a subspace of $\WS_{n-2,0,r-2}(\K)$, or $\K$ has characteristic $2$, $r$ is even
and $K(S)$ is congruent to a subspace of $\WA_{n-2,0,r-1}(\K)$.
\end{claim}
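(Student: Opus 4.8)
The plan is to apply the induction hypothesis to the space $K(S)$, which lies in $\Mats_{n-2}(\K)$ and has upper-rank at most $r-2=2(s-1)+\epsilon$. When $s \geq 3$, we invoke Theorem \ref{maintheosym} itself, with the integers $n-2$ and $r-2$ in place of $n$ and $r$ (so that $s-1$ and $\epsilon$ play the roles of $s$ and $\epsilon$); its dimensional assumption $\dim K(S)>\max\bigl(s_{n-2,s-2,\epsilon+2},s_{n-2,1,r-4}\bigr)$ is precisely Claim \ref{claim3}. When $s=2$ the induction hypothesis is too weak, so we fall back on Proposition \ref{symrank2} if $\epsilon=0$ and on Proposition \ref{symrank3} if $\epsilon=1$; the required bound, namely $\dim K(S)>3$ when $\epsilon=0$ and $\dim K(S)>6$ when $\epsilon=1$, follows from $\dim S>s_{n,1,r-2}$ together with $m \leq s-1=1$ and the inequality $\dim K(S)\geq \dim S-(n-1)-m$, by a one-line computation. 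In every case we obtain that $K(S)$ is congruent to a subspace of $\WS_{n-2,0,r-2}(\K)$, or of $\WS_{n-2,s-1,\epsilon}(\K)$, or --- only when $\K$ has characteristic $2$ and $r$ is even --- of $\WA_{n-2,0,r-1}(\K)$ (when $s=2$, only the second of these is actually produced by Propositions \ref{symrank2} and \ref{symrank3}).

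It then remains to discard the second possibility. Assume that $K(S)=R\,W\,R^T$ for some $R \in \GL_{n-2}(\K)$ and some linear subspace $W$ of $\WS_{n-2,s-1,\epsilon}(\K)$, and set
$$Q:=\begin{bmatrix} 1 & [0]_{1 \times (n-2)} \\ [0]_{(n-2) \times 1} & R^{-1}\end{bmatrix} \in \GL_{n-1}(\K).$$
Since $K(M)$ is precisely the submatrix of $P(M)$ obtained by deleting the first row and the first column, the submatrix of $Q\,P(M)\,Q^T$ obtained by deleting the first row and the first column equals $R^{-1}K(M)R^{-T}$, which lies in $W \subseteq \WS_{n-2,s-1,\epsilon}(\K)$, for every $M \in S$. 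As the zero blocks that define $\WS_{n-1,s,\epsilon}(\K)$ inside $\Mats_{n-1}(\K)$ involve neither the first row nor the first column, we deduce that $Q\,P(M)\,Q^T \in \WS_{n-1,s,\epsilon}(\K)$ for all $M \in S$; in other words, $P(S)$ is congruent to a subspace of $\WS_{n-1,s,\epsilon}(\K)$. Since $r<n-1$, this contradicts the fact, established earlier in the proof, that $P(S)$ is not congruent to a subspace of $\WS_{n-1,s,\epsilon}(\K)$.

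Therefore the second possibility cannot occur, and we conclude that $K(S)$ is congruent to a subspace of $\WS_{n-2,0,r-2}(\K)$, or that $\K$ has characteristic $2$, $r$ is even and $K(S)$ is congruent to a subspace of $\WA_{n-2,0,r-1}(\K)$; in the case $s=2$ this argument in fact shows that the sub-case under consideration never arises. The only delicate points are the separate handling of $s=2$, where one must step outside the induction and check the dimension bounds needed for Propositions \ref{symrank2} and \ref{symrank3}, and the bookkeeping of the block structures under the congruence by $Q$; beyond these there is no real difficulty, the substance of the argument being the descent to dimension $n-2$ that was carried out just before the claim.
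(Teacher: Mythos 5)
Your proof is correct and follows essentially the same path as the paper: apply the induction hypothesis to $K(S)$ (using Claim \ref{claim3}) when $s>2$, fall back on Propositions \ref{symrank2} and \ref{symrank3} when $s=2$ with the one-line dimension check, and then rule out the outcome $\WS_{n-2,s-1,\epsilon}(\K)$ because it would force $P(S)$ to be congruent to a subspace of $\WS_{n-1,s,\epsilon}(\K)$. The only difference is one of exposition: the paper states the last implication without proof, whereas you spell it out with the explicit block-diagonal congruence matrix $Q$, and you explicitly note that in the $s=2$ subcase this step shows the situation to be vacuous. Both additions are correct and harmless.
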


\begin{proof}
If $r\in \{4,5\}$ then
$$\dim K(S) \geq \dim S-(n-1)-1 > s_{n,1,\epsilon+2}-n =\dbinom{\epsilon+3}{2},$$
and hence $K(S)$ satisfies the assumptions of Proposition \ref{symrank2} if $r=4$, and of Proposition \ref{symrank3}
if $r=5$.

On the other hand, if $r>5$ then Claim \ref{claim3} shows that we can apply the induction hypothesis.

Hence, $K(S)$ is congruent to a subspace of $\WS_{n-2,s-1,\epsilon}(\K)$
or the claimed outcome holds.
However, the first option must be discarded since $P(S)$ is not congruent to a subspace of $\WS_{n-1,s,\epsilon}(\K)$.
\end{proof}

Without loss of generality, we can now assume that $K(S) \subset \WS_{n-2,0,r-2}(\K)$ or
$\K$ has characteristic $2$, $r$ is even and $K(S) \subset \WA_{n-2,0,r-1}(\K)$.

In any case, by setting $H':=\K^{n-2} \times \{0\} \times \K$, we see that
$$S_{H'} \subset \Vect(E_{1,n-1}+E_{n-1,1},E_{n,n-1}+E_{n-1,n})$$
since $r-1<n-2$.

Remembering that some matrix of $S$ has a non-zero entry at the $(1,1)$-spot if $\K$ has characteristic $2$, we see that $H'$ is $S$-adapted, and
we deduce that $m \leq 2$.
Hence,
\begin{multline*}
s_{n,1,r-2}\leq \dim S -1\leq (n-1)+m+\dim K(S)-1 \leq (n-2)+m+s_{n-2,0,r-2} \\
\leq (n-2)+2+s_{n-2,0,r-2}=s_{n,1,r-2}.
\end{multline*}
Hence, all the intermediate inequalities turn out to be equalities, which yields:
\begin{enumerate}[(a)]
\item $m=2$;
\item Either $K(S)=\WS_{n-2,0,r-2}(\K)$, or $\K$ has characteristic $2$, $r$ is even and $K(S)=\WA_{n-2,0,r-1}(\K)$;
\item $P(S)$ contains $E_{1,j}+E_{j,1}$ for all $j \in \lcro 2,n-1\rcro$, and it contains $E_{1,1}$.
\end{enumerate}
In particular, as $m=2$ and $S_{H'} \subset \Vect(E_{1,n-1}+E_{n-1,1},E_{n,n-1}+E_{n-1,n})$
we gather that $S$ contains $E_{n-1,n}+E_{n,n-1}$.

Now, we distinguish between two cases. Assume first that $K(S)=\WS_{n-2,0,r-2}(\K)$.
Combining this with (c), we get that
the space $S$ contains a matrix of the form
$$\begin{bmatrix}
I_{r-1} & [?]_{(r-1) \times (n-r+1)} \\
[?]_{(n-r+1) \times (r-1)} & [?]_{(n-r+1) \times (n-r+1)}
\end{bmatrix}.$$
Then, by the extraction lemma (Corollary \ref{extractioncorsym}), we would obtain that $I_{r-1}$ has rank less than $r-1$, which is false.

Assume now that $\K$ has characteristic $2$, $r$ is even and $K(S)=\WA_{n-2,0,r-1}(\K)$.
Then, we obtain that $S$ contains a matrix of the form
$$\begin{bmatrix}
0_{s \times s} & I_s & [?]_{s \times (n-2s)} \\
I_s & 0_{s \times s} & [?]_{s \times (n-2s)} \\
[?]_{(n-2s) \times s} & [?]_{(n-2s) \times s} & [?]_{(n-2s) \times (n-2s)}
\end{bmatrix}.$$
As in the above, since $2s \leq n-2$ the extraction lemma would yield that $\begin{bmatrix}
0_{s \times s} & I_s  \\
I_s & 0_{s \times s}
\end{bmatrix}$ has rank less than $r-2$, which is false.

This completes the study of Case 2.

\subsection{Case 3: $r=n-1$ and $m>0$}

Then, we know that $m \leq n-3$. Once more, we shall prove that this leads to a contradiction.
The strategy is globally similar to the one in Case 2, with increased technicalities however.
Throughout the proof, it will be useful to note that
$$s_{n,0,n-1} \geq s_{n,s,\epsilon},$$
which is obtained by a straightforward computation.

We start with a simple result.

\begin{claim}
One has $m \geq 2$.
\end{claim}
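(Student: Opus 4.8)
The plan is to argue by contradiction: assuming $m=1$, I would show that $\dim S\le s_{n,1,r-2}$, which contradicts the standing hypothesis $\dim S>\max\bigl(s_{n,s-1,\epsilon+2},s_{n,1,r-2}\bigr)$. Recall that here $s\ge 2$, so $n=r+1\ge 5$. Since we already arranged that $H:=\K^{n-1}\times\{0\}$ is $S$-adapted with $m=\dim S_H$, the assumption $m=1$ gives $S_H=\Vect(N_0)$ for a single nonzero matrix $N_0=\begin{bmatrix}0_{(n-1)\times(n-1)} & C_0\\ C_0^T & d\end{bmatrix}$ with $C_0\in\K^{n-1}$ and $d\in\K$. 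Because $H$ is $S$-adapted, $S_H$ contains no rank $1$ matrix, which forces $C_0\neq 0$; applying a congruence by a matrix of the form $\Diag(Q,1)$ with $Q\in\GL_{n-1}(\K)$ (such a congruence preserves $H$ and all the standing normalizations), I would reduce to $N_0=E_{1,n}+E_{n,1}+d\,E_{n,n}$.

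Next I would exploit the corner matrix $N_0$. For every $M\in S$ and every $t\in\K$, the matrix $M+tN_0$ lies in $S$ and hence has rank at most $r=n-1$; since $\#\K>2$, Lemma \ref{symmetriccornerlemma2} then yields that the submatrix $K(M)$ of $M$ obtained by deleting rows and columns $1$ and $n$ satisfies $\rk K(M)\le r-2=n-3$. Thus $K$ is a linear map on $S$ whose image is a subspace of $\Mats_{n-2}(\K)$ with upper-rank at most $(n-2)-1$, so Theorem \ref{oldtheosym} applies to $K(S)$. The two competing bounds there are $s_{n-2,0,n-3}=\dbinom{n-2}{2}$ and $s_{n-2,\lfloor(n-3)/2\rfloor,\,(n-3)\bmod 2}$, which are the two endpoints of the strictly convex sequence $\bigl(s_{n-2,k,(n-3)-2k}\bigr)_k$; a short check shows the former dominates for all $n\ge 5$, whence $\dim K(S)\le\dbinom{n-2}{2}$.

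It remains to bound $\dim\Ker K$. The matrices of $\Ker K$ are exactly those of $S$ whose entries vanish off the first and last rows and columns, so the linear map $\Ker K\to\K^{n-1}$, $M\mapsto(m_{1,1},m_{1,2},\dots,m_{1,n-1})$, has kernel equal to $S_H$ (a matrix in $\Ker K$ lies in $S_H$ iff its first row is zero on the first $n-1$ slots). Since $\dim S_H=1$, this gives $\dim\Ker K\le 1+(n-1)=n$. Combining the two estimates,
$$\dim S=\dim K(S)+\dim\Ker K\le\dbinom{n-2}{2}+n=s_{n,1,n-3}=s_{n,1,r-2},$$
which is the desired contradiction; hence $m\ge 2$.

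The only step that needs genuine care is the comparison of the two bounds coming from Theorem \ref{oldtheosym} for $K(S)$, i.e.\ checking that $\dbinom{n-2}{2}\ge s_{n-2,\lfloor(n-3)/2\rfloor,\,(n-3)\bmod 2}$ whenever $n\ge 5$; this is an elementary computation (equality occurring only at $n=5$), and it is the single mildly delicate point of the argument. Everything else is a routine dimension count plus a direct application of Lemma \ref{symmetriccornerlemma2}.
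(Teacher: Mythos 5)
Your proposal is correct and takes essentially the same route as the paper: assume $m=1$, normalize the unique generator of $S_H$ to $E_{1,n}+E_{n,1}+d\,E_{n,n}$, apply Lemma~\ref{symmetriccornerlemma2} to get $\urk K(S)\le n-3$, bound $\dim K(S)$ via Theorem~\ref{oldtheosym}, bound $\dim\Ker K$ by $(n-1)+m$, and conclude $\dim S\le s_{n,1,r-2}$, a contradiction. The paper's version is more terse (it doesn't spell out the $\dim\Ker K$ estimate or verify that $s_{n-2,0,n-3}$ dominates $s_{n-2,s-1,\epsilon}$), but the structure and every key step match.
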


\begin{proof}
Assume on the contrary that $m=1$. Without loss of generality, we can assume that $S_H$ contains $E_{1,n}+E_{n,1}+a E_{n,n}$
for some $a \in \K$. Then, with the same notation as in the previous section,
we would deduce from Theorem \ref{oldtheosym} that $\dim K(S) \leq \max(s_{n-2,s-1,\epsilon},s_{n-2,0,n-3})=s_{n-2,0,n-3}$.
Using the rank theorem once more, it would follow that
$$\dim S \leq (n-1)+m+\dim K(S) \leq n+s_{n-2,0,n-3}=s_{n,1,n-3},$$
contradicting our assumptions.
\end{proof}

In the rest of the proof, we shall need to consider the various block matrices in a more conceptual fashion:
we shall think in terms of symmetric bilinear forms.

\begin{Not}
Let $G$ be an arbitrary linear hyperplane of $\K^n$, and
let $N \in S_G$ be of rank $2$. We define
$$P_G(S):=\bigl\{(X,Y) \in G^2 \mapsto X^T M Y \mid M \in S\bigr\}$$
$$K_N(S):=\bigl\{ (X,Y) \in (\Ker N)^2 \mapsto X^T M Y \mid M \in S\bigr\},$$
which are a linear subspaces, respectively, of the space of all symmetric bilinear forms on $G$, and of the space of all symmetric
bilinear forms on $\Ker N$.
\end{Not}

To make things clearer, if $G=\K^{n-1} \times \{0\}$ and $N=E_{1,n}+E_{n,1}+a\,E_{n,n}$ for some $a \in \K$, then
with our usual notation $P(S)$ represents $P_G(S)$ in the standard basis of $G$,
and $K(S)$ represents $K_N(S)$ in the standard basis of $\Ker N=\{0\} \times \K^{n-2} \times \{0\}$.

Note that the extraction lemma reads as follows:

\begin{lemme}
Let $N$ be a rank $2$ matrix of $S$ that has a totally isotropic hyperplane.
Then, $\urk K_N(S) \leq r-2$.
\end{lemme}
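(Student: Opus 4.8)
The plan is to reduce this statement to Lemma \ref{symmetriccornerlemma2} by means of a congruence that brings $N$ into the standard shape used there.

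First I would analyze the geometry of $N$. Since $\rk N=2$, the kernel $\Ker N$ has dimension $n-2$, and the symmetric bilinear form attached to $N$ descends to a non-degenerate form on the two-dimensional quotient $\K^n/\Ker N$. If $H_0$ denotes a totally isotropic hyperplane of $N$, its image in $\K^n/\Ker N$ is a totally isotropic subspace of a non-degenerate two-dimensional form, hence has dimension at most $1$; on the other hand that image has dimension $(n-1)-\dim(H_0\cap\Ker N)\geq 1$. Therefore $\Ker N\subseteq H_0$ and the image is exactly a totally isotropic line, i.e.\ the form induced by $N$ on $\K^n/\Ker N$ is a hyperbolic plane.

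Next I would build an adapted basis: complete a basis $(f_2,\dots,f_{n-1})$ of $\Ker N$ into a basis $(e_1,f_2,\dots,f_{n-1})$ of $H_0$, then into a basis $(e_1,f_2,\dots,f_{n-1},e_n)$ of $\K^n$. Using that $e_1$ and the $f_j$ lie in $H_0$ and that the $f_j$ lie in $\Ker N$, every pairing vanishes except possibly $N(e_1,e_n)$ and $N(e_n,e_n)$; and $N(e_1,e_n)\neq 0$, for otherwise $N$ would have rank at most $1$. After rescaling $e_1$, the matrix of $N$ in this basis is $E_{1,n}+E_{n,1}+d\,E_{n,n}$ for some $d\in\K$, with $\Ker N$ corresponding to $\{0\}\times\K^{n-2}\times\{0\}$. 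Replacing $S$ by the congruent space $P^TSP$ (which preserves the upper-rank and the restriction ranks), we may thus assume $N=E_{1,n}+E_{n,1}+d\,E_{n,n}$. Then, for every $M\in S$, the matrix $M+tN$ belongs to $S$ and so has rank $\leq r$ for all $t\in\K$; since $\#\K>2$, Lemma \ref{symmetriccornerlemma2} gives that the central $(n-2)\times(n-2)$ block of $M$, which represents the restriction of $M$ to $\Ker N$, has rank $\leq r-2$. As the rank of a symmetric bilinear form on $\Ker N$ is unaffected by the congruence, this yields $\urk K_N(S)\leq r-2$.

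The statement is essentially a coordinate-free repackaging of Lemma \ref{symmetriccornerlemma2}, so there is no deep obstacle; the one point requiring genuine care is the geometric reduction showing $\Ker N\subseteq H_0$ and that $N$ induces a \emph{hyperbolic} plane on the quotient, so that one lands precisely in the normal form of Lemma \ref{symmetriccornerlemma2} rather than some other rank-$2$ shape, keeping in mind that in characteristic $2$ the coefficient $d$ cannot in general be removed.
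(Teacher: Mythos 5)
Your proof is correct and follows exactly the route the paper intends: the paper presents this statement as a coordinate-free restatement of the extraction lemma without writing out the reduction, and you supply that reduction carefully. The one genuinely substantive point, that $\Ker N\subseteq H_0$ and that one may therefore choose a basis in which $N$ has precisely the shape $E_{1,n}+E_{n,1}+d\,E_{n,n}$ of Lemma \ref{symmetriccornerlemma2} (rather than needing $d=0$, which would fail in characteristic $2$), is argued correctly and is exactly why the paper states the extraction lemma for matrices of that form.
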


This motivates the introduction of the following terminology:

\begin{Def}
A rank $2$ matrix $N \in \Mats_n(\K)$ is called \textbf{$S$-good} if
either $\K$ has characteristic not $2$ or some matrix $M \in S$ is not totally isotropic on $\Ker N$.
\end{Def}

\begin{claim}\label{existsSgoodclaim}
For every $S$-adapted linear hyperplane $G$ of $\K^n$, the space $S_G$ contains an $S$-good matrix.
\end{claim}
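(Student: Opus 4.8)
The plan is to reduce to the standard hyperplane and then produce an $S$-good matrix inside $S_G$ by a one-line dimension count. Performing a harmless congruence, I may assume that $G=\K^{n-1}\times\{0\}$, so that every matrix $N\in S_G$ can be written
$$N=\begin{bmatrix} [0]_{(n-1)\times(n-1)} & C(N) \\ C(N)^T & d(N)\end{bmatrix}$$
with $C(N)\in\K^{n-1}$ and $d(N)\in\K$. From this shape one reads off that $\rk N\leq 2$, that $\rk N=2$ precisely when $C(N)\neq 0$, and that in that case $\Ker N=\{(x,0):C(N)^T x=0\}$ is contained in $G$. Since $G$ is $S$-adapted, condition~(a) of that definition forbids rank~$1$ matrices in $S_G$; hence \emph{every nonzero matrix of $S_G$ has rank exactly $2$}. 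Finally, since $G$ is $S$-adapted, $\dim S_G\geq m\geq 2$ (we are in Case~3, where the bound $m\geq 2$ has just been established), so $S_G\neq\{0\}$.

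If $\K$ does not have characteristic $2$, then any nonzero matrix of $S_G$ is $S$-good by definition, and there is nothing more to prove. Assume now that $\K$ has characteristic $2$. By condition~(b) of $S$-adaptedness there exist $M_0\in S$ and $X_0\in G\setminus\{0\}$ with $X_0^T M_0 X_0\neq 0$. The key observation is that for every $N\in S_G$ the vector $NX_0$ is orthogonal, for the standard inner product, to $G$: indeed $Y^T N X_0=X_0^T N Y=0$ for all $Y\in G$, since $N$ is symmetric and $N\in S_G$. Hence $NX_0\in\K e_n$, so that the condition $NX_0=0$ is equivalent to the single scalar equation $e_n^T N X_0=0$. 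Therefore $\{N\in S_G:NX_0=0\}$ is a linear subspace of $S_G$ of codimension at most $1$, whence it has positive dimension. Picking a nonzero matrix $N$ in it, we have $\rk N=2$ by the previous paragraph, and $X_0\in\Ker N$ by construction; thus $M_0$ is a matrix of $S$ that is not totally isotropic on $\Ker N$, which shows that $N$ is $S$-good. This proves the claim.

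I do not anticipate a genuine obstacle here: the only point requiring a little care is the orthogonality remark $NX_0\in G^{\perp}$, which is what turns the constraint $NX_0=0$ into a \emph{single} linear condition on $N$ and is exactly the place where the hypothesis $m\geq 2$ (available in Case~3) gets used, so that the auxiliary subspace remains nonzero.
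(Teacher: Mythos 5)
Your proof is correct, and it takes a genuinely different route from the paper's. The paper argues by contradiction: it takes two linearly independent matrices $N_1,N_2\in S_G$, and if neither were $S$-good then both kernels $\Ker N_1$, $\Ker N_2$ would lie in the zero set of the nonzero quadratic form $q\colon X\mapsto X^T MX$ on $G$; in characteristic $2$ that zero set is a linear subspace, yet it would contain two \emph{distinct} hyperplanes of $G$ ($\Ker N_1\neq\Ker N_2$ since no nontrivial combination of $N_1,N_2$ has rank $1$), forcing $q=0$ --- a contradiction. Your argument is direct rather than by contradiction: you fix a single isotropic witness $X_0$ with $X_0^T M_0X_0\neq 0$, observe that every $N\in S_G$ satisfies $NX_0\in G^\perp$ (since $N$ is symmetric and $G$ is $N$-totally singular), so that $N\mapsto NX_0$ takes values in a line and the condition $NX_0=0$ cuts out a subspace of codimension at most $1$ in $S_G$; since $\dim S_G\geq m\geq 2$, a nonzero such $N$ exists, and it is automatically $S$-good because $X_0\in\Ker N$. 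Both arguments rely on exactly the same hypotheses ($\dim S_G\geq 2$, no rank~$1$ matrices, the isotropic witness), but yours avoids the characteristic-$2$ structure of the quadric and avoids comparing two kernels; it has the mild advantage of being constructive and of isolating the codimension-$\leq 1$ constraint cleanly, while the paper's argument is perhaps more in the spirit of the surrounding orthogonality bookkeeping.
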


\begin{proof}
Let $G$ be an $S$-adapted linear hyperplane of $\K^n$.
Then, $\dim S_G \geq m \geq 2$. Consider two linearly independent matrices $N_1$ in $N_2$ in $S_G$.
Assume that none of them is $S$-good, so that $\K$ has characteristic $2$.
There exists a matrix $M \in S$ such that the quadratic form $q : X \in G \mapsto X^T MX$ is non-zero.
Since $\K$ has characteristic $2$ and $M$ is symmetric, the isotropy cone $q^{-1} \{0\}$
is actually a linear subspace of $G$. Since none of $N_1$ and $N_2$ is $S$-good, it follows that
$\Ker N_1 \subset q^{-1}\{0\}$ and $\Ker N_2 \subset q^{-1}\{0\}$.
On the other hand $S_G$ contains no rank $1$ matrix, whence no non-trivial linear combination of $N_1$ and $N_2$
has rank $1$. It follows that $\Ker N_1$ and $\Ker N_2$ are distinct hyperplanes of $G$. Since $q^{-1}\{0\}$ is a linear subspace of $G$ that includes
the distinct linear hyperplanes $\Ker N_1$ and $\Ker N_2$, we conclude that $q^{-1}\{0\}=G$, contradicting the fact that $q$ is non-zero.

Hence, one of the matrices $N_1$ and $N_2$ is $S$-good.
\end{proof}

Now, let $G$ be an arbitrary $S$-adapted linear hyperplane of $\K^n$ such that $\dim S_G=m$.
We can choose an $S$-good matrix $N$ in $S_G$.
The rank theorem yields
$$\dim K_N(S) \geq \dim S-(n-1)-\dim S_G = \dim S-(n-1)-m.$$

With the same line of reasoning as in the end of the study of Case 2, we obtain:

\begin{claim}\label{nottype1claim}
Let $N$ be a rank $2$ matrix in $S_G$ for some $S$-adapted linear hyperplane $G$ such that $\dim S_G=m$.
Then, $K_N(S)$ is not represented by a subspace of $\WS_{n-2,0,n-3}(\K)$.
\end{claim}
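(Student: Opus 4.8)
The plan is to argue by contradiction, following the same line of reasoning as at the end of the study of Case~2. Assume that $K_N(S)$ is represented by a subspace of $\WS_{n-2,0,n-3}(\K)$. Up to congruence (which changes neither the value of $m$ nor the $S$-adaptedness of hyperplanes), I would reduce to the situation where $G=\K^{n-1}\times\{0\}$ and $N=E_{1,n}+E_{n,1}+a\,E_{n,n}$ for some $a\in\K$, so that $\Ker N=\{0\}\times\K^{n-2}\times\{0\}$; then, applying a further congruence by a matrix of the form $I_1\oplus Q\oplus I_1$ (which fixes $G$, $N$ and $\Ker N$), I would also arrange that $K(M)\in\WS_{n-2,0,n-3}(\K)$ for every $M\in S$, i.e.\ that $m_{i,n-1}=0$ for every $M=(m_{k,l})\in S$ and every $i\in\lcro 2,n-1\rcro$. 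In particular the $(n-1)$-th row and column of every matrix of $S$ are then supported on positions $1$ and $n$.

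Next I would prove that the hyperplane $H':=\K^{n-2}\times\{0\}\times\K$ is $S$-adapted. Indeed, $S_{H'}$ is contained in $\Vect(E_{1,n-1}+E_{n-1,1},\,E_{n-1,n}+E_{n,n-1})$, all of whose nonzero elements have rank $2$, which gives condition~(a); and if $\K$ has characteristic $2$, starting from a matrix $M\in S$ and a vector $Y\in G$ with $Y^TMY\neq 0$ (these exist because $G$ is $S$-adapted) and using $m_{n-1,n-1}=0$, one may delete the $e_{n-1}$-component of $Y$ without changing $Y^TMY$, landing in $H'$ and obtaining condition~(b). Since $m$ is the minimal dimension of $S_L$ over all $S$-adapted hyperplanes $L$, this forces $m\le\dim S_{H'}\le 2$; combined with the earlier claim $m\ge 2$, I get $m=2$, whence $S_{H'}=\Vect(E_{1,n-1}+E_{n-1,1},\,E_{n-1,n}+E_{n,n-1})$, and in particular $E_{n-1,n}+E_{n,n-1}\in S$.

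I would then run the same dimension count as at the end of Case~2. From $\dim K_N(S)\ge\dim S-(n-1)-m$ together with $\dim K_N(S)\le s_{n-2,0,n-3}$, $m=2$ and $\dim S>s_{n,1,n-3}=s_{n-2,0,n-3}+n$, all the intermediate inequalities collapse, so that $\dim S=s_{n,1,n-3}+1$, $K_N(S)=\WS_{n-2,0,n-3}(\K)$, and --- by inspecting the equality case of the rank theorem $\dim K_N(S)=\dim S-\dim S_G-\dim W'$ exactly as for property~(c) in Case~2 --- the space $P_G(S)$ contains $E_{1,1}$ as well as $E_{1,j}+E_{j,1}$ for every $j\in\lcro 2,n-1\rcro$. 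Finally, since $\WS_{n-2,0,n-3}(\K)$ contains the rank $n-3$ matrix $I_{n-3}\oplus 0$, there is $M_0\in S$ whose $\Ker N$-block equals $I_{n-3}\oplus 0$; subtracting from $M_0$ suitable elements of $S$ lifting $E_{1,1}$, the matrices $E_{1,j}+E_{j,1}$ ($2\le j\le n-1$) and the matrix $N$, I would produce $M^\ast\in S$ whose $\lcro 1,n-2\rcro$-block equals $I_{n-2}$ and whose $(n-1)$-th row and column vanish within positions $\lcro 1,n-1\rcro$. Deleting rows and columns $n-1$ and $n$ from $M^\ast$ leaves $I_{n-2}$, of rank $n-2$; but $E_{n-1,n}+E_{n,n-1}\in S$, so $M^\ast+t(E_{n-1,n}+E_{n,n-1})\in S$ has rank at most $r=n-1$ for every $t\in\K$, whence Corollary~\ref{extractioncorsym} (recall $\#\K>2$) yields that this deleted submatrix has rank at most $r-2=n-3<n-2$, a contradiction. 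This proves that $K_N(S)$ is not represented by a subspace of $\WS_{n-2,0,n-3}(\K)$.

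The step I expect to be the main obstacle is extracting the precise structure of $P_G(S)$ (namely that it contains $E_{1,1}$ and the $E_{1,j}+E_{j,1}$) from the collapsed chain of inequalities: this rests on the exact bookkeeping of the rank theorem and on identifying concretely what it means for the space $W'$ to attain its maximal possible dimension $n-1$. Everything else is a close transcription of the argument already carried out at the end of Case~2.
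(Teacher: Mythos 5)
Your proof is correct and carries out faithfully what the paper intends by the phrase ``With the same line of reasoning as in the end of the study of Case~2'': you reduce to a normal form for $G$, $N$ and $K_N(S)$, show that $H'=\K^{n-2}\times\{0\}\times\K$ is $S$-adapted with $\dim S_{H'}\le 2$, collapse the dimension chain to force $m=2$, $K_N(S)=\WS_{n-2,0,n-3}(\K)$ and $P_G(S)\supset\Vect\bigl(E_{1,1},\,E_{1,j}+E_{j,1}\bigr)$, and then derive the contradiction via Corollary~\ref{extractioncorsym}. Your observation that $m_{n-1,n-1}=0$ gives the characteristic-$2$ adaptedness of $H'$ directly is a modest streamlining of the cubic-form argument the paper used at the corresponding point of Case~2, but the route is essentially the same.
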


Finally, one checks that $s_{n-2,s-2,\epsilon+2} \leq s_{n-2,1,n-5}$ unless $s=2$ and $\epsilon=1$ (in which case $n=6$).
This motivates that we tackle the case $n=6$ separately.

\subsubsection{Subcase 3.1: $n=6$}

Here, the assumptions on the dimension of $S$ tell us that $\dim S \geq 13$.
Let us consider the $S$-adapted linear hyperplane $H:=\K^{n-1} \times \{0\}$. Without loss of generality,
we can assume that, for some list $(\alpha_1,\dots,\alpha_m) \in \K^m$, the space
$S_H$ contains the matrix $E_{n,i}+E_{i,n}+\alpha_i E_{n,n}$ for all $i \in \lcro 1,m\rcro$.

As $K_N(S)$ is not represented by a subspace of $\WS_{4,0,3}(\K)$, it follows from Theorem \ref{oldtheosym} that
$\dim K_N(S)<6$.
Yet, by the rank theorem
$$\dim K_N(S) \geq \dim S-(n-1)-m \geq 13-(n-1)-(n-3) = 5.$$
We deduce that $m=n-3=3$ and that $P(S)$ contains $E_{1,j}+E_{j,1}$ for all $j \in \lcro 2,5\rcro$.
By applying the same method to the matrices $E_{i,n}+E_{n,i}+\alpha_i E_{n,n}$, with $i$ in $\lcro 2,3\rcro$,
we find that $P(S)$ contains $E_{i,j}+E_{j,i}$ for all $(i,j)\in \lcro 1,5\rcro^2 \setminus
\{ 4,5\}^2$ such that $i \neq j$.
However, it would follow that $K(S)$ contains the rank $4$ matrix $E_{4,1}+E_{3,2}+E_{2,3}+E_{1,4}$,
contradicting the fact that $\urk K(S) \leq 3$.

\subsubsection{Subcase 3.2:  $n \neq 6$}

Let $G$ be an $S$-adapted linear hyperplane of $\K^n$.
For any $S$-good matrix $N$ in $S_G$, the rank theorem yields
\begin{equation}\label{majodimKSN}
\dim K_N(S) \geq \dim S-(n-1)-m \geq s_{n,1,n-3}+1-(n-1)-(n-3)=s_{n-2,1,n-5}.
\end{equation}

\begin{claim}\label{shittyclaim1}
Let $G$ be an $S$-adapted linear hyperplane of $\K^n$ such that $\dim S_G=m$.
Assume that there is an $S$-good matrix $N$ in $S_G$ such that $\dim K_N(S)>s_{n-2,1,n-5}$.
Then, $n=7$, $m=3$, and $P_G(S)$ is represented by $\WS_{6,3,0}(\K)$.
\end{claim}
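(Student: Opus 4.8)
The plan is to apply the inductive hypothesis to $K_N(S)$, viewed as a space of symmetric bilinear forms on the $(n-2)$-dimensional space $\Ker N$, then to eliminate all but one configuration of the parameters, and finally to lift the structure of $K_N(S)$ up to $P_G(S)$.

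First, since $N \in S_G$ has the totally isotropic hyperplane $G$ and $\urk S\leq r=n-1$, the reformulation of the extraction lemma stated just before the definition of ``$S$-good'' gives $\urk K_N(S)\leq n-3$; as $\dim \Ker N=n-2$, this is the corank-one regime. By hypothesis $\dim K_N(S)>s_{n-2,1,n-5}$, and we have already recorded, using $n\neq 6$, that $s_{n-2,s-2,\epsilon+2}\leq s_{n-2,1,n-5}$; hence $K_N(S)$ satisfies the dimension hypotheses of Theorem \ref{maintheosym} in ambient dimension $n-2$ with rank parameter $n-3=2(s-1)+\epsilon$ (for $n-3\leq 3$ one uses Propositions \ref{symrank2} and \ref{symrank3} instead, and the value $n=5$ cannot occur, since then Proposition \ref{symrank2} would force $\dim K_N(S)\leq s_{3,1,0}$, contradicting $\dim K_N(S)>s_{3,1,0}=s_{n-2,1,n-5}$). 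By the induction hypothesis, $K_N(S)$ is congruent to a subspace of $\WS_{n-2,0,n-3}(\K)$, of $\WS_{n-2,s-1,\epsilon}(\K)$, or --- only if $\K$ has characteristic $2$ and $\epsilon=0$ --- of $\WA_{n-2,0,n-2}(\K)=\Mata_{n-2}(\K)$.

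The last possibility is impossible: $N$ being $S$-good provides $M\in S$ with $X^TMX\neq 0$ for some $X\in \Ker N$, which is incompatible with $K_N(S)$ consisting of alternating forms. The first possibility is ruled out by Claim \ref{nottype1claim}. Thus $K_N(S)$ embeds congruently in $\WS_{n-2,s-1,\epsilon}(\K)$, so that $s_{n-2,1,n-5}<\dim K_N(S)\leq s_{n-2,s-1,\epsilon}$. Because $n-3=2(s-1)+\epsilon$, the index $s-1$ is the last index of the strictly convex sequence $(s_{n-2,k,(n-3)-2k})_k$, so a short computation turns this strict inequality into $\epsilon=0$ together with $s\in\{3,4\}$, i.e.\ $n\in\{7,9\}$. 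The case $n=9$ is then discarded: there $\dim K_N(S)=s_{7,3,0}$, so $K_N(S)$ is congruent to the whole of $\WS_{7,3,0}(\K)$; combining $\dim P_G(S)\leq \dim K_N(S)+(n-1)$ --- the restriction map $P_G(S)\twoheadrightarrow K_N(S)$ has kernel of dimension at most $n-1$ --- with $\dim S>s_{9,1,6}$ and $m\leq n-3$ forces $m\geq 5$, and then, reusing the high-rank-matrix construction from the end of Case 2 (the $S$-good matrices provided by Claim \ref{existsSgoodclaim} together with the near-full structure of $K_N(S)$ yield a matrix of $S$ of rank at least $n$), one reaches a contradiction. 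Hence $n=7$, $\epsilon=0$, $s=3$; here $s_{n-2,1,n-5}=s_{5,1,2}=8$ and $s_{n-2,s-1,\epsilon}=s_{5,2,0}=9$, so $\dim K_N(S)=9$ and $K_N(S)$ is congruent to $\WS_{5,2,0}(\K)$ itself.

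It remains to fix $m$ and to lift. From $\dim P_G(S)=\dim S-m$, $\dim S>s_{7,1,4}=17$ and $\dim P_G(S)\leq \dim K_N(S)+(n-1)=15$ we get $m\geq 3$, while $m\leq n-3=4$. In characteristic not $2$ one has $m\leq \lfloor (n-1)/2\rfloor=3$ by Proposition \ref{oldkeylemmasymmetricsubfull}, and in characteristic $2$ the residual possibility $m=4$ is excluded by a dedicated argument: then $\dim P_G(S)\in\{14,15\}$, and either $\dim P_G(S)=15$, in which case the coordinate description below identifies $P_G(S)$ with a copy of $\WS_{6,3,0}(\K)=\WS_{n-1,s,\epsilon}(\K)$ and a range-compatibility argument in the spirit of Propositions \ref{lifting1sym} and \ref{lifting2symeven} makes $S$ congruent to a subspace of $\WS_{7,3,0}(\K)=\WS_{n,s,\epsilon}(\K)$, contradicting (H2); or $\dim P_G(S)=14$, and a direct rank computation inside $S$ again produces a matrix of rank at least $n$. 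Thus $m=3$, $\dim S=18$ and $\dim P_G(S)=15$. Finally, in coordinates on $G$ for which $K_N(S)=\WS_{5,2,0}(\K)$ on the hyperplane $\Ker N$, every matrix of $P_G(S)$ has its $3\times 3$ block indexed by the last three coordinates of $\WS_{5,2,0}(\K)$ equal to zero (it is the corresponding block of the restriction to $\Ker N$); hence $P_G(S)$ lies in the $15$-dimensional coordinate subspace of symmetric forms on $G$ having that block zero, which, after reordering the basis of $G$, is precisely $\WS_{6,3,0}(\K)$. As $\dim P_G(S)=15$, equality holds and $P_G(S)$ is represented by $\WS_{6,3,0}(\K)$.

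The main obstacle is the elimination step in the third and fourth paragraphs: reducing the parameters to $n=7$ and $m=3$. Discarding $n=9$ and, in characteristic $2$, the case $m=4$ both rest on carefully chaining the convexity estimates with the bounds on $m$ and on $\dim P_G(S)$ and, ultimately, on re-running the rank-obstruction construction used to close Cases 1 and 2; the arithmetic is routine but must be carried out with care.
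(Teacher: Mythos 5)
Your proposal tracks the paper faithfully through the first half of the argument: applying the induction hypothesis to $K_N(S)$ on $\Ker N$, using the $S$-good property of $N$ and Claim \ref{nottype1claim} to force $K_N(S)$ into a congruent copy of $\WS_{n-2,s-1,\epsilon}(\K)$, and then grinding through the arithmetic to get $\epsilon=0$, $s\in\{3,4\}$, $\dim K_N(S)=s_{n-2,s-1,0}$. Up to that point, this is the paper's route.

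Where you diverge, there is a genuine gap. The paper's crucial intermediate step is a self-contained proof that $m\leq s$ (valid for both $s=3$ and $s=4$): after normalizing $K_N(S)=\WS_{n-2,s-1,0}(\K)$, it examines $S_{H_j}$ for the coordinate hyperplanes $H_j$ with $j\in\lcro s+1,n-1\rcro$, shows these are $S$-adapted with $S_{H_j}\subset\Vect(E_{i,j}+E_{j,i})_{i\in\lcro 1,s\rcro\cup\{n\}}$, deduces under $m\geq s+1$ that $S_{H_j}$ is the full span, and then harvests enough matrices $E_{i,j}+E_{j,i}\in S$ to run the extraction-lemma argument from Proposition \ref{keylemmasymmetriceven} and conclude $S$ is congruent to a subspace of $\WS_{n,s,0}(\K)$, contradicting (H2). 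Combined with $m\geq n-4$ and $n=2s+1$, the inequality $n-4\leq m\leq s$ reads $2s-3\leq s$, which kills $s=4$ and pins down $n=7$, $m=3$ in one stroke. You replace this with (i) for $n=9$, a dimension count giving $m\geq 5$ followed by an unproved appeal to ``reusing the high-rank-matrix construction from the end of Case 2,'' and (ii) for $n=7$ in characteristic $2$, an unspecified ``dedicated argument'' to rule out $m=4$. Neither (i) nor (ii) is an actual proof; they are exactly what the paper's $m\leq s$ lemma supplies, and without that lemma the claim is not established.

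A secondary issue: in characteristic not $2$ you invoke Proposition \ref{oldkeylemmasymmetricsubfull} to assert $m\leq 3$. That proposition produces \emph{some} hyperplane $H$ with $\dim S_H\leq\lfloor(n-1)/2\rfloor$, but $m$ is by definition the minimum of $\dim S_G$ over $S$-\emph{adapted} hyperplanes $G$; Proposition \ref{oldkeylemmasymmetricsubfull} does not assert that its hyperplane avoids rank-one matrices in $S_H$, so passing from its conclusion to a bound on $m$ needs an extra step you do not supply. The paper's argument sidesteps this entirely since $m\leq s$ already gives $m\leq 3$ once $s=3$.

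The concluding paragraph --- reading off $P_G(S)$ from $\dim P_G(S)=15$ together with the vanishing block inherited from $K_N(S)=\WS_{5,2,0}(\K)$ --- is fine and is essentially the paper's step, but it only becomes available after $n=7$, $m=3$ have been secured, which is precisely what is missing.
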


\begin{proof}
Without loss of generality, we can assume that $G=\K^{n-1} \times \{0\}$ and $N=E_{1,n}+E_{n,1}+a\,E_{n,n}$
for some $a \in \K$. We use the same notation as before, so that $K(S)$ represents $K_N(S)$ and
$P(S)$ represents $P_G(S)$.

Since $n \neq 6$ we know that $s_{n-2,s-2,\epsilon+2} \leq s_{n-2,1,n-5}$,
and hence the induction hypothesis applies to $K(S)$.
Using Claim \ref{nottype1claim}, we deduce that
$K(S)$ is congruent to a subspace of $\WS_{n-2,s-1,\epsilon}(\K)$
(indeed, if $\K$ has characteristic $2$ then, $N$ being $S$-good, the space $K(S)$ must contain a non-alternating matrix).

We deduce that $s_{n-2,s-1,\epsilon} \geq \dim K_N(S) \geq s_{n-2,1,n-5}+1$. A straightforward computation shows that
this yields $\epsilon=0$, $s \in \{3,4\}$, and $s_{n-2,s-1,\epsilon}=s_{n-2,1,n-5}+1$. In turn, the latter equality shows that
$K(S)$ is congruent to $\WS_{n-2,s-1,0}(\K)$.
Coming back to \eqref{majodimKSN}, we deduce that:
\begin{itemize}
\item $m \geq n-4$;
\item If $m=n-4$ then $P(S)$ contains $E_{1,1}$ and it contains $E_{1,i}+E_{i,1}$ for all $i \in \lcro 2,n-1\rcro$.
\end{itemize}

We shall prove that $m \leq s$. Assume on the contrary that $m \geq s+1$.
Then, we perform an additional congruence transformation to reduce the situation to the one
where $K(S)=\WS_{n-2,s-1,0}(\K)$. Hence, $P(S) \subset \WS_{n-1,s,0}(\K)$.
Let $j \in \lcro s+1,n-1\rcro$ and consider the linear hyperplane $H_j$ of $\K^n$ defined by the equation $x_j=0$ in the standard basis.
Note that $H_j$ is $S$-adapted: indeed, on the one hand the inclusion $P(S) \subset \WS_{n-1,s,0}(\K)$ yields that $S_{H_j}$ contains no rank $1$ matrix;
on the other hand if $\K$ has characteristic $2$, then as $G$ is $S$-adapted some matrix of $P(S)$ has a non-zero diagonal entry,
and since $P(S) \subset \WS_{n-1,s,0}(\K)$ the row index of such an entry must belong to $\lcro 1,s\rcro$, yielding
a non-alternating form in $P_{H_j}(S)$. Moreover, $S_{H_j} \subset \Vect(E_{i,j}+E_{j,i})_{i \in \lcro 1,s\rcro \cup \{n\}}$.
Since $\dim S_{H_j} \geq m \geq s+1$, we deduce that $S_{H_j}= \Vect(E_{i,j}+E_{j,i})_{i \in \lcro 1,s\rcro \cup \{n\}}$.
Hence, $S$ contains
$E_{i,j}+E_{j,i}$ for all $i \in \lcro 1,s\rcro \cup \{n\}$ and all $j \in \lcro s+1,n-1\rcro$.
In turn, this shows that $S$ is congruent to a linear subspace $\calT$ of $\Mats_n(\K)$ which contains
$E_{i,j}+E_{j,i}$ for all $i \in \lcro 1,s\rcro$ and all $j \in \lcro s+1,n\rcro$ (remember that $n$ is odd, whence $n-s=s+1$).
From there, with the line of reasoning from the end of the proof of Proposition \ref{keylemmasymmetriceven},
one can use the extraction lemma to prove that $\calT \subset \WS_{n,s,0}(\K)$, contradicting assumption (H2).

It follows that $m \leq s$. Remembering that $n$ is odd and that $n-4 \leq m$, this yields $2s-3 \leq s$, whence $s \leq 3$.
In turn, this shows that $s=3$, $n=7$ and $m=s$. Hence, $m=n-4$, and we deduce that $P(S)$ contains $E_{1,1}$ and $E_{1,i}+E_{i,1}$ for all $i \in \lcro 2,6\rcro$.
Since $K(S)$ is congruent to $\WS_{5,2,0}(\K)$, we conclude that $P(S)$ is congruent to $\WS_{6,3,0}(\K)$.
\end{proof}

\begin{claim}\label{shittyclaim2}
Let $G$ be an $S$-adapted linear hyperplane of $\K^n$ such that $\dim S_G=m$.
Assume that there is an $S$-good matrix $N$ in $S_G$ such that $\dim K_N(S)=s_{n-2,1,n-5}$.
Then, $n=5$, $m=2$, and $P_G(S)$ is represented by $\WS_{4,2,0}(\K)$.
\end{claim}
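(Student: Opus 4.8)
After a harmless congruence I would arrange that $G=\K^{n-1}\times\{0\}$, $N=E_{1,n}+E_{n,1}+a\,E_{n,n}$ and $\Ker N=\{0\}\times\K^{n-2}\times\{0\}$, and I would use the usual maps $P=P_G$ and $K=K_N$. Since $\dim K_N(S)=s_{n-2,1,n-5}$ and the chain \eqref{majodimKSN} opens with $\dim K_N(S)\ge \dim S-(n-1)-m\ge s_{n,1,n-3}+1-(n-1)-(n-3)=s_{n-2,1,n-5}$, all of its inequalities must be equalities. This yields $m=n-3$, $\dim S=s_{n,1,n-3}+1$, and the tightness of the bound $\dim\{M\in S:\ K(M)=0\}\le m+(n-1)$; the latter says precisely that $P_G(S)$ contains \emph{every} symmetric bilinear form on $G$ for which $\Ker N$ is totally singular, so that $E_{1,1}$ and $E_{1,j}+E_{j,1}$ for $j\in\{2,\dots,n-1\}$ all lie in $P(S)$. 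Since $m$ and $\dim S$ are now fixed, running the same computation with an arbitrary rank $2$ matrix $M\in S_G$ in place of $N$ shows that $\dim K_M(S)=s_{n-2,1,n-5}$ is \emph{equivalent} to $P_G(S)\supseteq\{B:\ \Ker M\text{ totally singular for }B\}$.

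Next I would pin down $\urk K_N(S)$. The extraction lemma (in the form stated just before Claim \ref{nottype1claim}, valid as $G$ is a totally singular hyperplane for $N$) gives $\urk K_N(S)\le r-2=n-3$. If $\urk K_N(S)\le n-4$, then Theorem \ref{oldtheosym} would bound $\dim K_N(S)$ by the critical dimension of $\Mats_{n-2}(\K)$ for upper-rank $\le n-4$, hence by $\max\bigl(s_{n-2,0,n-4},\,s_{n-2,\lfloor(n-4)/2\rfloor,(n-4)\bmod 2}\bigr)$; a direct estimate shows this quantity is $<s_{n-2,1,n-5}$ for every admissible $n$, a contradiction. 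So $\urk K_N(S)=n-3=(n-2)-1$, i.e.\ $K_N(S)$ is a subfull-rank space in $\Mats_{n-2}(\K)$.

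For $n=5$ this yields the conclusion quickly. Here $\dim K_N(S)=s_{3,1,0}=3$ is exactly the critical dimension of $\Mats_3(\K)$ for upper-rank $\le 2$, so Theorem \ref{oldtheosym}(a) applies; since $K_N(S)$ is not represented by a subspace of $\WS_{3,0,2}(\K)$ (Claim \ref{nottype1claim}) and contains a non-alternating matrix (as $N$ is $S$-good), the only option left is that $K_N(S)$ is congruent to the whole of $\WS_{3,1,0}(\K)$. Choosing a basis of $\Ker N$ that realises this and adjoining $e_1$, I would combine it with the first paragraph: $P_G(S)$ surjects onto $K_N(S)$ with kernel equal to the $4$-dimensional space $\{B:\ \Ker N\text{ totally singular for }B\}$ found above, and as $\dim P_G(S)=\dim S-m=7=3+4$ this forces $P_G(S)$ to be the space of $B\in\Mats_4(\K)$ vanishing on a fixed $2$-dimensional subspace, which is congruent to $\WS_{4,2,0}(\K)$. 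Together with $m=n-3=2$, this is the desired conclusion.

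It remains to rule out $n\ge 7$ (recall $n\neq 6$ in Subcase 3.2), and this is the step I expect to be the main obstacle. If some $S$-good rank $2$ matrix $M\in S_G$ had $\dim K_M(S)>s_{n-2,1,n-5}$, then Claim \ref{shittyclaim1} would force $n=7$ and $P_G(S)$ to be congruent to the whole of $\WS_{6,3,0}(\K)$; restricting that explicit space to the hyperplane $\Ker N'$ for a rank $2$ matrix $N'\in S_G$ whose kernel is transverse to the totally singular $3$-space of $\WS_{6,3,0}(\K)$ would exhibit an invertible form in $K_{N'}(S)$, i.e.\ $\urk K_{N'}(S)\ge n-2>n-3$, against the extraction lemma. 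Otherwise every $S$-good rank $2$ matrix $M\in S_G$ has $\dim K_M(S)=s_{n-2,1,n-5}$, so by the equivalence of the first paragraph $P_G(S)\supseteq\{B:\ \Ker M\text{ totally singular for }B\}$ for all such $M$; using Proposition \ref{symrank2} (legitimate as $\dim S_G=n-3>3$) to put $S_G$ in normal form, the kernels of its rank $2$ matrices are exactly the hyperplanes $c^{\perp}$ of $G$ with $c$ ranging over a fixed $(n-3)$-dimensional subspace $C_0$, and summing the form-spaces $\{B:\ \Ker M\text{ totally singular}\}$ over matrices whose $c$-vectors form a basis of $C_0$ gives $P_G(S)\supseteq\{B:\ B|_{C_0^{\perp}}=0\}$, a space of forms vanishing on the fixed $2$-dimensional subspace $C_0^{\perp}$ of $G$. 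Restricting this to $\Ker N'$ for a rank $2$ matrix $N'\in S_G$ with $C_0^{\perp}\subseteq\Ker N'$ produces a non-degenerate form in $K_{N'}(S)$, contradicting $\urk K_{N'}(S)\le n-3$ once more. The two points needing genuine care here are the bookkeeping of $S$-goodness in characteristic $2$ (so that enough rank $2$ matrices of $S_G$ are $S$-good for their $c$-vectors to span $C_0$) and the elementary but technical fact that a sum of spaces of forms, each vanishing on a hyperplane $c^{\perp}$ of $G$, fills up the space of forms vanishing on the common intersection of those hyperplanes.
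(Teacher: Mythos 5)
Your proposal is correct in its main lines, and despite a different surface language it is structurally the same argument as the paper's. Both proofs deduce $m=n-3$, $\dim S = s_{n,1,n-3}+1$, and that $P_G(S)$ contains every form on $G$ vanishing on $\Ker N$ from the tightness of the chain \eqref{majodimKSN}; both then propagate the same equality $\dim K_{M}(S)=s_{n-2,1,n-5}$ to the other rank-$2$ matrices of $S_G$ and deduce that $P_G(S)$ contains every form on $G$ vanishing on the $2$-dimensional space $G\cap C_0^\perp$; and both then obtain a contradiction for $n\geq 7$ by exhibiting a form of too-high rank in some $K_{N'}(S)$. The paper does this with explicit block entries ($P(S)$ contains $E_{i,i}$ for $i\le n-3$ and $E_{i,j}+E_{j,i}$ off $\{n-2,n-1\}^2$, then a linear combination in $K(S)$ is invertible); you do it with the language of form-spaces and the lemma on sums of form-spaces vanishing on hyperplanes. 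Your reformulation of the tightness condition as an equivalence is a nice conceptual packaging, and the $n=5$ case is handled cleanly and in the same way (via Theorem \ref{oldtheosym}(a) applied to $K_N(S)\subset\Mats_3(\K)$, ruling out the other two extremal cases by Claim \ref{nottype1claim} and $S$-goodness).

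A few comments. (1) Your ``Case A'' for $n\ge 7$ is more roundabout than needed: once $m=n-3$ is established, Claim \ref{shittyclaim1}'s conclusion $m=3$ already contradicts $n\ge 7$, and also $\dim P_G(S)=\binom{n-2}{2}+4<s_{6,3,0}$ for $n=7$; the detour through transversality to the singular $3$-space of $\WS_{6,3,0}(\K)$ is unnecessary and, as you observe, would require checking that such an $N'$ actually lives in $S_G$. (2) The concern you flag about $S$-goodness in characteristic $2$ is in fact a non-issue: since $P_G(S)$ contains $E_{1,1}$ (guaranteed by the tightness) and $S_G$ contains no rank-$1$ matrix, every matrix of $S_G$ not collinear with $N$ is automatically $S$-good (as $\Ker M\not\subseteq e_1^\perp$), and for multiples of $N$ one just uses $K_M=K_N$. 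The paper makes exactly this observation. (3) The ``elementary but technical fact'' about sums of form-spaces is genuinely true and easy: with $H_i=c_i^\perp\cap G$, the space of forms vanishing on $H_1\cap H_2$ has dimension $2(\dim G)-1$, and the spaces $\{B: B|_{H_i}=0\}$ each have dimension $\dim G$ and intersect in a line, so the sum has the right dimension; induction then handles $k>2$. Finally, your ``Step 3'' (pinning down $\urk K_N(S)=n-3$) is not actually needed: the extraction lemma already gives $\urk K_N(S)\le n-3$, which is all Theorem \ref{oldtheosym}(a) requires at $n=5$, and the $n\ge 7$ contradictions do not use it.
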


\begin{proof}
To simplify things, we can assume that $G=\K^{n-1} \times \{0\}$, $N=E_{1,n}+E_{n,1}+a\,E_{n,n}$
for some $a \in \K$, and $G$ contains a matrix of the form $E_{i,n}+E_{n,i}+ ? E_{n,n}$ for all $i \in \lcro 2,m\rcro$.

We use the same notation as before, so that $K(S)$ represents $K_N(S)$ and
$P(S)$ represents $P_G(S)$.
Coming back to \eqref{majodimKSN}, we obtain:
\begin{itemize}
\item $m=n-3$;
\item The space $P_G(S)$ contains $E_{1,1}$, and it contains $E_{1,j}+E_{j,1}$ for all $j \in \lcro 2,n-1\rcro$.
\end{itemize}
Since $P_G(S)$ contains $E_{1,1}$ and $S_G$ contains no rank $1$ matrix,  every matrix of $S_G$ that is not collinear to $N$
is $S$-good. Since $m=n-3$, Claim \ref{shittyclaim1} combined with inequality \eqref{majodimKSN}
shows that $\dim K_{N'}(S)=s_{n-2,1,n-5}$ for \emph{every} non-zero matrix $N'$ in $S_G$.
Hence, with the same line of reasoning applied to the $E_{i,n}+E_{n,i}+ ? E_{n,n}$ matrices,
we obtain that $P(S)$ contains $E_{i,i}$ for all $i \in \lcro 1,n-3\rcro$, and it contains $E_{i,j}+E_{j,i}$
for all distinct $i$ and $j$ that do not both belong to $\{n-2,n-1\}^2$.
In turn, this shows that $K(S)$ contains $E_{i,j}+E_{j,i}$ for all $(i,j)\in \lcro 1,n-2\rcro^2 \setminus \{n-3,n-2\}^2$ with $i \neq j$, and it contains
$E_{i,i}$ for all $i \in \lcro 1,n-4\rcro$. If $2(n-4) \geq n-2$, it is obvious that some linear combination of those matrices
is invertible, contradicting $\urk K(S)<n-2$. Hence, $2(n-4) \leq n-3$, which leads to $n \leq 5$. Yet, $s \geq 2$,
and hence $n=5$. Then, we have just shown that $\WS_{3,1,0}(\K) \subset K(S)$. However,
$\dim K(S) \leq 3$ by Theorem \ref{maintheosym}, whence $K(S)=\WS_{3,1,0}(\K)$.
Since $P(S)$ contains $E_{1,1}$ and $E_{1,j}+E_{j,1}$ for all $j \in \lcro 2,4\rcro$, we conclude that $P(S)=\WS_{4,2,0}(\K)$.
This proves the claimed result.
\end{proof}

Now, we are close to the conclusion of our proof. Using the above two claims and Claim \ref{existsSgoodclaim}, one sees that either $m=n-3$ or $m=n-4$.
In the first case, the assumptions of Claim \ref{shittyclaim1} cannot hold, and in the second one
the assumptions of Claim \ref{shittyclaim2} cannot hold.
Hence, we obtain the following results:

\begin{itemize}
\item[(H3)] The integer $n$ is odd;
\item[(H4)] One has $m=s$;
\item[(H5)] For every $S$-adapted linear hyperplane $G$ of $\K^n$
such that $\dim S_G=m$, the space $P_G(S)$ is represented by $\WS_{n-1,s,0}(\K)$.
\end{itemize}
Actually, we even have $n \in \{5,7\}$ but we will not use this fact in the remainder of the proof.

Without loss of generality, we can assume that $H=\K^{n-1} \times \{0\}$ is $S$-adapted with $\dim S_H=m$
and that $P(S)=\WS_{n-1,s,0}(\K)$.
Let us write every matrix $M$ of $S$ as
$$M=\begin{bmatrix}
[?]_{s \times s} & B(M)^T & [?]_{s \times 1} \\
B(M) & [0]_{(n-s-1) \times (n-s-1)} & C(M) \\
[?]_{1 \times s} & C(M)^T & a(M)
\end{bmatrix}$$
with $B(M) \in \Mat_s(\K)$, $C(M) \in \K^s$ and $a(M) \in \K$.

Set
$$\calT:=\Bigl\{\begin{bmatrix}
B(M) & C(M)
\end{bmatrix} \mid M \in S\Bigr\}$$
and note that $B(S)=\Mat_s(\K)$ since $P(S)=\WS_{n-1,s,0}(\K)$.
Our aim is to reduce the situation to the one where $C=0$.
This involves two steps.

\begin{claim}
For all $Z \in \K^s \setminus \{0\}$, we have
$\dim(\calT^T Z)=s$.
\end{claim}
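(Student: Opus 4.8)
The plan is to first dispose of the trivial inequality and then attack the real content through a rank-jump phenomenon. Since $B(S)=\Mat_s(\K)$ and $Z\neq 0$, the linear map $\Mat_s(\K)\to\K^s$, $B\mapsto B^TZ$, is onto, so the projection of $\calT^T Z$ onto its first $s$ coordinates is all of $\K^s$; hence $\dim(\calT^T Z)\geq s$, and the whole point is to prove $\calT^T Z\neq\K^{s+1}$, i.e.\ that no $M\in S$ has $B(M)^TZ=0$ and $C(M)^TZ\neq 0$. The statement "$\dim(\calT^T Z)=s$ for all $Z$" is invariant under replacing $S$ by $Q^T S Q$ with $Q=\Diag(I_s,R,1)$ (which changes neither $\urk S\leq n-1$ nor $P(S)=\WS_{n-1,s,0}(\K)$, and transforms $B\mapsto R^TB$, $C\mapsto R^TC$), so I may assume $Z=e_1$; the claim then amounts to: \emph{no $M\in S$ has first row of $B(M)$ equal to $0$ and $C(M)_1\neq 0$}.

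The key structural remark. Writing each $M\in S$ as $\begin{bmatrix}A(M)&B(M)^T&u(M)\\B(M)&0&C(M)\\u(M)^T&C(M)^T&a(M)\end{bmatrix}$ with $A(M)\in\Mats_s(\K)$, $u(M)\in\K^s$, a permutation congruence carries $M$ into $\begin{bmatrix}0_{s\times s}&D(M)\\D(M)^T&E(M)\end{bmatrix}$, where $D(M):=\begin{bmatrix}B(M)&C(M)\end{bmatrix}\in\Mat_{s,s+1}(\K)$ and $E(M):=\begin{bmatrix}A(M)&u(M)\\u(M)^T&a(M)\end{bmatrix}\in\Mats_{s+1}(\K)$. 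When $\rk D(M)=s$ its kernel is a line $\K w$, and reducing $D(M)$ to $[I_s\ 0]$ by invertible row/column operations turns the bottom-right block into a congruent copy of $E(M)$ whose $(s+1,s+1)$ entry is $w^TE(M)w$; the Schur complement lemma then gives $\rk M=2s$ if $w^TE(M)w=0$ and $\rk M=2s+1$ otherwise. Since $\urk S\leq n-1=2s$, this yields the crucial constraint: for every $M\in S$ with $\rk D(M)=s$, one has $w^TE(M)w=0$ for $w$ spanning $\Ker D(M)$.

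Now suppose, for contradiction, that some $M_*\in S$ has first row of $B(M_*)$ zero and $C(M_*)_1=1$. Using that $M\mapsto(A(M),B(M))$ maps $S$ onto $\Mats_s(\K)\times\Mat_s(\K)$, pick $M_1\in S$ with $A(M_1)=E_{1,1}$ (the symmetric $s\times s$ matrix with a single $1$ in position $(1,1)$) and $B(M_1)=\Diag(0,1,\dots,1)$, and set $M_\lambda:=M_1+\lambda M_*$. Then $B(M_\lambda)$ still has first row $0$, $C(M_\lambda)_1=C(M_1)_1+\lambda$, and the rows $2,\dots,s$ of $B(M_\lambda)$ are $e_k+\lambda\,(\text{row }k\text{ of }B(M_*))$, so for all but finitely many $\lambda$ we have $\rk D(M_\lambda)=s$ with kernel spanned by $w(\lambda)=\begin{bmatrix}\bar w(\lambda)\\0\end{bmatrix}$, $\bar w(\lambda)\in\K^s$ orthogonal to those $s-1$ rows. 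The constraint then reads $\bar w(\lambda)^TA(M_\lambda)\bar w(\lambda)=0$, i.e.
\[ P(\lambda):=\bigl(\bar w(\lambda)_1\bigr)^2+\lambda\,\bar w(\lambda)^TA(M_*)\bar w(\lambda)=0 . \]
With the Cramer normalization of $\bar w(\lambda)$ this is a genuine polynomial in $\lambda$ with $\bar w(0)=\pm e_1$, so $P(0)=1$; but $P$ vanishes at cofinitely many points of $\K$, hence $P\equiv 0$ when $\K$ is infinite (or finite with $\#\K>\deg P$), forcing $P(0)=0$, a contradiction. So no such $M_*$ exists.

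The main obstacle is making this uniform in $\K$: $\deg P$ grows with $s$, so the last step only works for $\#\K$ large. To cover the remaining fields one instead uses the constraint on the affine subspace $\{M\in S:B(M)=I_s\}$, where every $D(M)$ has rank $s$ and the constraint becomes the identity $C(M)^TA(M)C(M)-2C(M)^Tu(M)+a(M)=0$; restricting this identity to the lines $M_0+tN$ with $N\in S_H$ — along which $A(M)$ and $B(M)$ stay constant while $C,u,a$ vary linearly — gives, for each such line, a polynomial of degree $\leq 2$ in $t$ vanishing on all of $\K$, hence (as $\#\K>2$) explicit linear and quadratic relations among the $(u,C,a)$-data of $S_H$; feeding these relations back into the rank-jump mechanism applied to a single matrix $M_*+N$ with a suitably chosen $N$ produces the contradiction. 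I expect the bookkeeping of this last step, in particular isolating the degenerate sub-case in which the pertinent coefficient of $A$ is forced to vanish, to be the most delicate part.
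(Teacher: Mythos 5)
Your approach is genuinely different from the paper's, and the ``rank-jump'' observation at its core is correct and worth keeping: after permuting to $\begin{bmatrix} 0 & D(M) \\ D(M)^T & E(M) \end{bmatrix}$ with $D(M)=\begin{bmatrix}B(M)&C(M)\end{bmatrix}$ and $E(M)=\begin{bmatrix}A(M)&u(M)\\u(M)^T&a(M)\end{bmatrix}$, the Schur-complement lemma does indeed show that $\rk M=2s+\rk\bigl(w^TE(M)w\bigr)$ whenever $\rk D(M)=s$ and $w$ spans $\Ker D(M)$, so $\urk S\leq 2s$ forces $w^TE(M)w=0$. Your reduction to $Z=e_1$ and the reformulation ``no $M\in S$ has first row of $B(M)$ zero and $C(M)_1\neq 0$'' are also correct.

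However, there is a genuine gap, and you identify it yourself: the polynomial $P(\lambda)=(\bar w(\lambda)_1)^2+\lambda\,\bar w(\lambda)^TA(M_*)\bar w(\lambda)$ has degree up to $2s-1$, and up to $s$ values of $\lambda$ are excluded before the vanishing $P(\lambda)=0$ can be asserted, so the conclusion $P\equiv 0$ only follows when $\#\K$ exceeds roughly $3s$. The theorem is stated under $\#\K>2$, so the argument is incomplete for every $s\geq 2$ over the small fields. Your fallback plan for those fields — deriving quadratic identities from lines $M_0+tN$ with $N\in S_H$ on the affine slice $\{B(M)=I_s\}$ and ``feeding them back into the rank-jump mechanism'' — is a sketch rather than a proof: you do not say which $N$ to choose, do not resolve the degenerate sub-case you flag, and you do not invoke the structural hypotheses (H4), (H5) that the paper has already established at this stage. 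In fact the paper's own proof relies essentially on (H5): it takes the second $S$-adapted hyperplane $H'=\K^{n-2}\times\{0\}\times\K$, uses (H5) to produce an $s$-dimensional $V\subset H'$ on which $P_{H'}(S)$ is totally singular, shows $e_{s+1}\in V$, and concludes that $Se_{s+1}$ — which is linearly isomorphic to $\calT^Te_1$ — lies in the orthogonal complement of $V+V_0$ with $V_0:=\{0\}^s\times\K^s\times\{0\}$ and $\dim(V+V_0)\geq s+1$, forcing $\dim\calT^Te_1\leq s$ uniformly over all fields. Without using (H5) (or some comparable input), your argument does not close for small $\#\K$, so as written the proposal does not prove the claim.
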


\begin{proof}
Without loss of generality, we can assume that $Z$ is the first vector of the standard basis.
Since $B(S)=\Mat_s(\K)$, it is obvious that $\dim \calT^T Z \geq s$.

Denote by $e_1,\dots,e_n$ the standard basis of $\K^n$.
Let us consider the hyperplane $H':=\K^{n-2} \times \{0\} \times \K$.
Note that $S_{H'}$ is included in $\Vect(E_{i,n-1}+E_{n-1,i})_{1 \leq i \leq s} +\Vect(E_{n,n-1}+E_{n-1,n})$, which has dimension $s+1$.
If $S$ contained $E_{n-1,n}+E_{n,n-1}$, then the extraction lemma would yield that every matrix of
$\WS_{n-2,s,0}(\K)$ is singular, which is obviously false.
Hence, $\dim S_{H'} \leq s$. On the other hand, since $P(S)$ contains $E_{1,1}$ we see that
some bilinear form in $P_{H'}(S)$ is non-alternating. Finally, no matrix in $S_{H'}$ has rank $1$.
It follows that $H'$ is $S$-adapted with the minimal dimension.
Using (H5), we obtain an $s$-dimensional linear subspace $V$ of $H'$ such that
$P_{H'}(S)$ is the space of all bilinear forms on $H'$ that are totally singular on $V$.
Yet, all the bilinear forms in $P_{H'}(S)$ are already totally singular on $V_0 \cap H'$,
where $V_0:=\{0_s\} \times \K^s \times \{0\}$. Hence, $V_0 \cap H' \subset V$, and in particular $e_{s+1} \in V$.
Hence, for all $M$ in $S$, we have $X^T Me_{s+1}=0$ for all $X \in V$, as well as for all $X \in V_0$.
It follows that $S e_{s+1}$ is included in the orthogonal complement of $V+V_0$ for the standard symmetric bilinear form on $\K^n$.
However $V_0$ is not included in $V$
(since it is not included in $H'$), whence $V \neq V_0$. Since $\dim V=\dim V_0=s$, we deduce that $\dim(V+V_0)\geq s+1$, and we conclude that
$\dim \calT^T Z \leq s$, and we conclude that $\dim \calT^T Z=s$.
\end{proof}

\begin{claim}
There is a non-zero vector $X \in \K^{s+1}$ such that $NX=0$ for all $N \in \calT$.
\end{claim}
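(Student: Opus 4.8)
The plan is to upgrade the pointwise information from the preceding claim — that $\dim(\calT^T Z)=s$ for every nonzero $Z\in\K^s$ — into a single global identity $C(M)=B(M)Y$ valid for all $M\in S$; the vector we want will then simply be $X=\begin{bmatrix} Y \\ -1\end{bmatrix}$.

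First I would fix a nonzero $Z\in\K^s$. Since $B(S)=\Mat_s(\K)$ and $Z\neq 0$, the map $B\mapsto B^TZ$ is onto $\K^s$ (a suitable rank one $B$ does the job), so the projection of $\calT^T Z\subseteq\K^{s+1}$ onto its first $s$ coordinates is surjective; as $\dim(\calT^T Z)=s$, that projection is in fact an isomorphism, which means $\calT^T Z$ is the graph of a linear form. Equivalently, there is a unique vector $\ell_Z\in\K^s$ such that $Z^T\bigl(C(M)-B(M)\ell_Z\bigr)=0$ for every $M\in S$; uniqueness uses once more that $B(S)=\Mat_s(\K)$ and $Z\neq 0$.

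Next, I would evaluate this relation on the standard basis of $\K^s$. For $Z=e_i$ it says that the $i$-th entry of $C(M)$ equals the product of the $i$-th row of $B(M)$ with $\ell_{e_i}$. Writing the analogous relation for $e_j$ (with $j\neq i$) and for $e_i+e_j$ and subtracting, one obtains that the product of the $i$-th row of $B(M)$ with $(\ell_{e_i}-\ell_{e_i+e_j})$ plus the product of the $j$-th row of $B(M)$ with $(\ell_{e_j}-\ell_{e_i+e_j})$ vanishes for every $M\in S$, hence for every $B\in\Mat_s(\K)$; letting the $i$-th row of $B$ vary freely while keeping the $j$-th row zero (and vice versa) forces $\ell_{e_i}=\ell_{e_i+e_j}=\ell_{e_j}$. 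Since $s\geq 2$, transitivity yields a common vector $Y:=\ell_{e_1}=\cdots=\ell_{e_s}$, and then for every $i$ the $i$-th entry of $C(M)$ equals the $i$-th entry of $B(M)Y$, i.e.\ $C(M)=B(M)Y$ for all $M\in S$. Taking $X:=\begin{bmatrix} Y \\ -1\end{bmatrix}\in\K^{s+1}\setminus\{0\}$ then gives $NX=0$ for every $N\in\calT$, which is the assertion.

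I do not expect a genuine obstacle here: the argument is short and elementary, and the only mildly delicate point is the passage from the dimension count $\dim(\calT^T Z)=s$ to the existence of the linear form $\ell_Z$, which rests entirely on the surjectivity of the first-coordinate projection — the real work having already been carried out in the proof of the preceding claim. Note also that no hypothesis on $\#\K$ is required beyond what is already assumed.
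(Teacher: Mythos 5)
Your proof is correct, and it establishes the same underlying fact as the paper's proof---that every column of every matrix of $\calT^T$ lies in a single hyperplane of $\K^{s+1}$---but by a different mechanism. The paper introduces the linear map $\varphi : N \in \calT^T \mapsto (Ne_1,\dots,Ne_s)$, shows it is an isomorphism by the dimension count $\dim \calT^T \geq s^2 = \sum_i \dim \calT^T e_i$, deduces that $\calT^T\bigl(\sum_i e_i\bigr) = \sum_i \calT^T e_i$, and then uses $\dim \calT^T\bigl(\sum_i e_i\bigr) = s$ to conclude that every $\calT^T e_i$ equals this common $s$-dimensional subspace, whose orthogonal direction gives $X$. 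You instead observe that each $\calT^T Z$ is a graph over its first $s$ coordinates (the projection is onto because $B(S)=\Mat_s(\K)$ and is then bijective by the dimension count), extract the representing vector $\ell_Z$, and polarize at $Z=e_i$, $e_j$, $e_i+e_j$, exploiting the full range of $B$, to force all $\ell_{e_i}$ to coincide with a single $Y$; this directly yields $C(M)=B(M)Y$ and hence $X$. Both routes rely on exactly the same two inputs (the previous claim and $B(S)=\Mat_s(\K)$); the paper's packaging via the product map $\varphi$ is perhaps slicker, while your polarization argument is a bit more hands-on and gives $X$ explicitly without a separate hyperplane-orthogonality step. There is no gap in what you wrote.
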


\begin{proof}
Denote by $(e_1,\dots,e_s)$ the standard basis of $\K^s$.
The linear map
$$\varphi : N \in \calT^T \mapsto (Ne_1,\dots, Ne_s) \in \underset{i=1}{\overset{s}{\prod}} \calT^T e_i$$
is injective, whereas $\dim \calT^T \geq \dim B(S)=s^2 =\dim \underset{i=1}{\overset{s}{\prod}} \calT^T e_i$.
Hence, $\varphi$ is a vector space isomorphism. It follows that, for $x:=\underset{i=1}{\overset{s}{\sum}} e_i$, we have
$\calT^T x=\underset{i=1}{\overset{s}{\sum}} \calT^T e_i$, and as $\dim \calT^T x=s$ we conclude that $\calT^T e_i=\calT^T x$ for all $i \in \lcro 1,s\rcro$.
Since $\calT^T x$ is a linear hyperplane of $\K^{s+1}$, this yields a non-zero vector $X \in \K^{s+1}$ such that $X^T N=0$ for all $N \in \calT^T$,
and the claimed result follows.
\end{proof}

Finally, the last entry of $X$ is non-zero since $B(S)=\Mat_s(\K)$. Hence, no generality is lost in assuming that
$X=\begin{bmatrix}
Y \\
-1
\end{bmatrix}$ for some $Y \in \K^s$, in which case $C(M)=B(M)Y$ for all $M \in S$.
Therefore, with the help of an additional harmless congruence transformation,
we can now assume that $C(M)=0$ for all $M \in S$.
Then, for all $M \in S$, we have $\rk B(M)<s$ whenever $a(M)\neq 0$.
If the linear form $a$ were non-zero, then $\calS':=\{M \in S : \; a(M)=1\}$ would be an affine hyperplane of
$S$, and $B(\calS')$ would be an affine subspace of $\Mat_s(\K)$ with codimension at most $1$ in which all the matrices are singular,
contradicting the affine version of Flanders's theorem (see \cite{dSPaffpres}).
Hence, $a(M)=0$ for all $M \in S$, and we conclude that $S \subset \WS_{n,s,0}(\K)$
(this actually contradicts assumption (H2), but never mind).

Our proof of Theorem \ref{maintheosym} is now complete.


\begin{thebibliography}{10}
\bibitem{AtkLloyd}
M.D. Atkinson and S. Lloyd,
{Large spaces of matrices of bounded rank.}
Quart. J. Math. Oxford (2)
{\bf 31} (1980) 253--262.

\bibitem{Beasley}
L.B. Beasley,
{Null spaces of spaces of matrices of bounded rank.}
Current Trends in Matrix Theory, Elsevier, 1987, 45--50.

\bibitem{ChooLimNg}
W.L. Chooi, M.H. Lim and Z.C. Ng,
{Linear spaces and preservers of symmetric matrices of bounded rank-two.}
Linear Multilinear Algebra
{\bf 61} (2013) 1051--1062.

\bibitem{Flanders}
H. Flanders,
{On spaces of linear transformations with bounded rank.}
J. Lond. Math. Soc.
{\bf 37} (1962) 10--16.

\bibitem{Lim}
M.H. Lim,
{Linear transformations on symmetric matrices.}
Linear Multilinear Algebra
{\bf 7} (1979) 47-–57.

\bibitem{Loewy}
R. Loewy,
{Large spaces of symmetric matrices of bounded rank are decomposable.}
Linear Multilinear Algebra
{\bf 48} (2001) 355--382.

\bibitem{LoewyRadwan}
R. Loewy and N. Radwan,
{Spaces of symmetric matrices of bounded rank.}
Linear Algebra Appl.
{\bf 197-198} (1994) 189--215.

\bibitem{Meshulamsymmetric}
R. Meshulam,
{On two extremal matrix problems.}
Linear Algebra Appl.
{\bf 114-115} (1989) 261--271.

\bibitem{dSPsym}
C. de Seguins Pazzis,
{Affine spaces of symmetric or alternating matrices with bounded rank.}
Linear Algebra Appl.
{\bf 504} (2016) 503–-558.

\bibitem{dSPboundedrankv2}
C. de Seguins Pazzis,
{Large spaces of matrices with bounded rank revisited.}
Linear Algebra Appl.
{\bf 504} (2016) 124--189.

\bibitem{dSPRCsym}
C. de Seguins Pazzis,
{Range-compatible homomorphisms on spaces of symmetric or alternating matrices.}
Linear Algebra Appl.
{\bf 503} (2016) 135--163.

\bibitem{dSPaffpres}
C. de Seguins Pazzis,
{The affine preservers of non-singular matrices.}
Arch. Math.
{\bf 95} (2010) 333--342.

\bibitem{dSPboundedrank}
C. de Seguins Pazzis,
{The classification of large spaces of matrices with bounded rank.}
Israel J. Math.
{\bf 208} (2015) 219--259.

\end{thebibliography}
\end{document}